\setlist[enumerate,1]{label=\textup{(\roman*)}}
\renewcommand*{\PrintDOI}[1]{\href{http://dx.doi.org/\detokenize{#1}}{doi: \detokenize{#1}}}
\newcommand{\comment}[1]{}  
\theoremstyle{plain}
\newtheorem{theorem}{Theorem}[section]
\newtheorem{lemma}[theorem]{Lemma}
\newtheorem{corollary}[theorem]{Corollary}
\newtheorem{proposition}[theorem]{Proposition}
\theoremstyle{remark}
\newtheorem{remark}[theorem]{Remark}
\theoremstyle{definition}
\newtheorem{definition}[theorem]{Definition}
\newtheorem{example}[theorem]{Example}
\newtheorem{step}{Step}
\numberwithin{theorem}{section}
\newcommand\N{\mathbb N}
\newcommand\Q{\mathbb Q}
\newcommand\Z{\mathbb Z}
\newcommand{\coma}{\widehat}
\newcommand{\comb}{\overbracket[.7pt][1.4pt]}
\newcommand*{\tub}[2]{\mathcal{U}(#1,#2)}
\newcommand*{\tens}{\mathsf{T}}
\newcommand*{\jens}{\mathsf{J}}
\newcommand*{\cone}{\mathsf{cone}}
\newcommand{\updagger}{\textup{\tiny\!\!\dagger}}
\newcommand{\idealin}{\mathrel{\triangleleft}} 
\newcommand{\diff}{\mathrm{d}}
\newcommand{\Cont}{\mathrm{C}}
\newcommand{\defeq}{\mathrel{:=}} 
\newcommand*{\into}{\rightarrowtail}
\newcommand*{\onto}{\twoheadrightarrow}
\newcommand*{\ling}[1]{#1_\mathrm{lg}}
\DeclarePairedDelimiter{\abs}{\lvert}{\rvert}
\DeclarePairedDelimiter{\norm}{\lVert}{\rVert}
\DeclarePairedDelimiter{\floor}{\lfloor}{\rfloor}
\DeclarePairedDelimiterX{\setgiven}[2]{\{}{\}}{#1\,{:}\,\mathopen{}#2}
\newcommand\hot{\mathbin{\comb{\otimes}}}
\newcommand\haotimes{\mathbin{\coma{\otimes}}}
\DeclareMathOperator{\coker}{coker}
\DeclareMathOperator{\holim}{holim}
\DeclareMathOperator{\qholim}{``holim''}
\DeclareMathOperator{\HA}{HA}
\DeclareMathOperator{\HAC}{\mathbb{HA}}
\newcommand{\fake}{\mathop{\text{``}\!\prod\!\text{''}}}
\newcommand{\ev}{\mathrm{ev}}
\newcommand{\even}{\mathrm{ev}}
\newcommand{\odd}{\mathrm{odd}}
\newcommand{\id}{\mathrm{id}}
\newcommand{\nb}{\nobreakdash}
\newcommand{\dvr}{V}
\newcommand{\dvgen}{\pi}
\newcommand{\dvf}{F}
\newcommand{\resf}{\mathbb F}
\DeclareMathOperator{\Hom}{Hom}
\DeclareMathOperator{\HP}{HP}
\DeclareMathOperator{\diss}{diss}
\begin{document}
\title[Analytic cyclic homology in positive characteristic]
{Analytic cyclic homology\\in positive characteristic}

\author{Ralf Meyer}
\email{rmeyer2@uni-goettingen.de}

\address{Mathematisches Institut\\
  Universit\"at Göttingen\\
  Bunsenstra\ss{}e 3--5\\
  37073 Göttingen\\
  Germany}

\author{Devarshi Mukherjee}
\email{dmukherjee@dm.uba.ar}

\address{Dep. Matemática-IMAS\\
 FCEyN-UBA, Ciudad Universitaria Pab 1\\
 1428 Buenos Aires\\ 
Argentina}

\begin{abstract}
  Let~\(\dvr\) be a complete discrete valuation ring with
  residue field~\(\resf\).  We define a cyclic homology theory
  for algebras over~\(\resf\), by lifting them to free algebras
  over~\(\dvr\), which we enlarge to tube algebras and complete
  suitably.  We show that this theory may be computed using any
  pro-dagger algebra lifting of an \(\resf\)\nb-algebra.  We show
  that our theory is polynomially homotopy invariant, excisive, and
  matricially stable.
\end{abstract}

\thanks{The second named author would like to thank Ryszard Nest and Guillermo Corti\~nas for valuable insights.  This research
  is partially supported by the Center for Symmetry and Deformation,
  Copenhagen, Denmark and the Feodor-Lynen Fellowship of the Alexander von Humboldt Foundation. The authors thank the anonymous referee for helpful comments.}




\maketitle

\tableofcontents

\section{Introduction}

Let~\(\dvr\) be a complete discrete valuation ring.  Let~\(\dvgen\)
denote a uniformiser, \(\resf\) its residue field, and~\(\dvf\)
its fraction field.  \emph{We assume~\(\dvf\) to have
  characteristic~\(0\) throughout this article.}

Cyclic homology and its variants are fundamental invariants in
noncommutative geometry.  They were developed by Connes and Tsygan
in the 1980s and vastly generalise de Rham cohomology for smooth
manifolds and schemes to noncommutative algebras.  This article is a
step in a programme that aims at defining a version of cyclic
homology for \(\resf\)\nb-algebras that specialises to rigid
cohomology in the commutative case.  Here we propose such a
definition and prove that our theory has good homological
properties and gives rigid cohomology at least for
smooth curves over~\(\resf\).  As a highly noncommutative example,
we compute our theory for Leavitt and Cohn path algebras
over~\(\resf\).  Other important examples that deserve to be treated
in the future are the crossed product algebras attached to orbifolds
(actions of finite groups on affine varieties over~\(\resf\)) and
noncommutative tori.

The first step in the programme mentioned above is achieved
in~\cite{Cortinas-Cuntz-Meyer-Tamme:Nonarchimedean}, by identifying
Berthelot's rigid cohomology for finite type commutative
\(\resf\)\nb-algebras with the periodic cyclic homology of certain
dagger completed liftings to \(\dvr\)\nb-algebras.  The tube algebra
construction and the dagger completion in the realm of bornological
algebras introduced
in~\cite{Cortinas-Cuntz-Meyer-Tamme:Nonarchimedean} are crucial
ingredients in our construction as well.  Another important
technical ingredient that is still missing
in~\cite{Cortinas-Cuntz-Meyer-Tamme:Nonarchimedean} is the concept
of bornological torsionfreeness introduced
in~\cite{Meyer-Mukherjee:Bornological_tf}.  The second step in the
programme is the definition of an analytic cyclic homology theory
for projective systems of dagger algebras
in~\cite{Cortinas-Meyer-Mukherjee:NAHA}.

The periodic cyclic homology approach
in~\cite{Cortinas-Cuntz-Meyer-Tamme:Nonarchimedean} gives a chain
complex computing rigid cohomology that is exactly functorial, not
just up to homotopy equivalence.  This makes it easy to glue the
chain complexes on the charts of a scheme and is an advantage over
other definitions of rigid cohomology that depend on the choice of
a lifting over~\(\dvr\).  Our theory is also defined by attaching a
chain complex \(\HAC(A)\) to an \(\resf\)\nb-algebra~\(A\) in a
natural way, so that it could generalise to an invariant of schemes.

Our
main achievement is to prove that \(\HAC(A)\) may be computed using
any dagger algebra lifting of~\(A\).  Namely, let~\(D\) be a dagger
algebra, that is, a suitably complete bornological algebra
over~\(\dvr\), and let \(A\defeq D/\dvgen D\) be its reduction
mod~\(\dvgen\), which is an \(\resf\)\nb-algebra.  We assume that
the quotient bornology on \(D/\dvgen D\) is the fine one and briefly
call~\(D\) fine mod~\(\dvgen\).  Then the chain complex \(\HAC(D)\)
defined in~\cite{Cortinas-Meyer-Mukherjee:NAHA} is quasi-isomorphic
to the chain complex \(\HAC(A)\) defined here.  More generally, we
prove this in the situation of an extension of projective systems of
bornological algebras \(N \into D \onto A\), where~\(D\) is a
pro-dagger algebra and fine mod~\(\dvgen\) and~\(N\) is analytically
nilpotent (see~\cite{Cortinas-Meyer-Mukherjee:NAHA}).

We also prove that such an extension \(N \into D \onto A\) exists for any
\(\resf\)\nb-algebra~\(A\).  This is the key to proving that our
theory satisfies excision and is matrix-stable and homotopy
invariant.  The computation of analytic cyclic homology for Leavitt
path algebras follows easily from these formal properties combined
with~\cite{Cortinas-Montero:K_Leavitt}.  This example highlights the
power of seemingly formal properties as a computational tool.  This
insight is the motivation for the definition of bivariant
K\nb-theory on categories of rings and algebras (see
\cites{Cuntz:Bivariante, Cortinas-Thom:Bivariant_K}).

If \(D_1\) and~\(D_2\) are two dagger algebras with
\(D_1/\dvgen D_1 \cong D_2/\dvgen D_2\), then it follows from the
theorem mentioned above that \(\HAC(D_1)\) and \(\HAC(D_2)\) are
quasi-isomorphic.  This becomes wrong for periodic cyclic homology.
The difference between periodic and analytic cyclic homology is
subtle and technical.  This is the reason why we cannot prove yet
that our theory generalises rigid cohomology for commutative
\(\resf\)\nb-algebras of finite type.  It is shown
in~\cite{Cortinas-Cuntz-Meyer-Tamme:Nonarchimedean} that rigid
cohomology is isomorphic to \(\HP_*(D)\) for a suitable dagger
algebra lifting~\(D\), while we prove here that our theory agrees
with \(\HA_*(D)\).  We can only identify \(\HP_*(D)\) with
\(\HA_*(D)\) if~\(D\) is smooth of relative dimension~\(1\)
over~\(\dvr\), that is, when we start with a smooth curve
over~\(\resf\).

Starting with the definition
in~\cite{Cortinas-Meyer-Mukherjee:NAHA}, it is very hard to prove
that \(\HAC(D)\) for a pro-dagger algebra that is fine
mod~\(\dvgen\) depends only on \(D/\dvgen D\).  Our proof of this
depends on locally defined maps, that is, families of partial maps
defined only on bounded \(\dvr\)\nb-submodules, without any
compatibility between these partial maps.  We use a different
completion, taken in the realm of inductive systems of Banach
\(\dvf\)\nb-vector spaces, in order to handle such maps.  In the
end, our main theorem implies that completing in the realm of
bornological \(\dvf\)\nb-vector spaces gives a quasi-isomorphic
chain complex.  We cannot prove this, however, without using
inductive systems in a key technical step.

The paper is organised as follows.

In Section~\ref{sec:preliminaries}, we review some basics on
bornologies and their relationship with inductive
systems.  We compare the completions of
bornological \(\dvf\)\nb-vector spaces and inductive systems of
seminormed \(\dvf\)\nb-vector spaces and prove a criterion when the
dissection functors from bornological vector spaces to inductive
systems intertwine these two different completion functors.

In Section~\ref{sec:definition_HA}, we define the analytic cyclic
complex \(\HAC(A)\) of an \(\resf\)\nb-algebra~\(A\), its homology
\(\HA_*(A)\), and the bivariant analytic cyclic homology
\(\HA_*(A,B)\) for two \(\resf\)\nb-algebras \(A\) and~\(B\).  To
define \(\HA_*(A,B)\), we observe that the category of projective
systems of inductive systems of Banach \(\dvf\)\nb-vector spaces is
quasi-Abelian, so that its derived category is defined.  The
construction of \(\HAC(A)\) has many small steps as
in~\cite{Cortinas-Meyer-Mukherjee:NAHA}.  Compared
to~\cite{Cortinas-Meyer-Mukherjee:NAHA}, we use
a different completion, as explained above, and
some steps occur in a different order in order to stay longer with
bornologies.

Section~\ref{sec:compare_HA_theories} compares our new theory to the
analytic cyclic homology theory for pro-dagger algebras defined
in~\cite{Cortinas-Meyer-Mukherjee:NAHA}.  This section is where we
state the theorems about quasi-isomorphisms
\(\HAC(A) \simeq \HAC(D)\).  It only contains the easier parts of
the proofs.  The key steps in the proofs are postponed to
Section~\ref{sec:independence_lifting}.  This is where we build
partial homomorphisms between different \(\dvr\)\nb-algebras related
to~\(A\).  The more technical statements in
Section~\ref{sec:compare_HA_theories} are based on a variant of
\(\HAC(A)\) that depends on a projective system of complete,
torsionfree bornological \(\dvr\)\nb-modules~\(W\) with a morphism
\(\varrho\colon W \to A\).  This theory is defined in
Section~\ref{sec:homotopy_stability}, by changing only the very
first step in the construction of \(\HAC(A)\).  In the easiest case,
\(W\) could be a dagger algebra and~\(\varrho\) could come from an
isomorphism \(D/\dvgen D\cong A\).

Section~\ref{sec:lift_pro_dagger} proves that any
\(\resf\)\nb-algebra~\(A\) is part of an extension
\(N \into D \onto A\), where \(D=(D_n)_{n\in\N}\) is a projective
system of dagger algebras and fine mod~\(\dvgen\), and~\(N\) is
analytically nilpotent.  In
this situation, \(\HAC(A)\) is quasi-isomorphic to \(\HAC(D)\).  The
proof is subtle because the universal property of the tube algebra
only allows us to build maps to torsionfree algebras, not to~\(A\).
In Section~\ref{sec:homological_properties}, we prove that
(bivariant) analytic cyclic homology for \(\resf\)\nb-algebras is
homotopy invariant, excisive and matricially stable.  We use the
quasi-isomorphisms \(\HAC(A) \simeq\HAC(D)\) proven in
Section~\ref{sec:lift_pro_dagger} to carry over the analogous
results for projective systems of dagger algebras proven
in~\cite{Cortinas-Meyer-Mukherjee:NAHA}.

\section{Bornologies and inductive systems}
\label{sec:preliminaries}

We recall the category of bornological \(\dvr\)\nb-modules and some
important subcategories, and compare these to related categories of
inductive systems of \(\dvr\)\nb-modules.  We also compare the
completions in the categories of bornological \(\dvf\)\nb-vector
spaces and inductive systems of seminormed \(\dvf\)\nb-vector
spaces.  This is needed to compare our new definition with the one
used in \cites{Cortinas-Cuntz-Meyer-Tamme:Nonarchimedean,
  Cortinas-Meyer-Mukherjee:NAHA}.  In later sections, this will be
used to carry over theorems about excision and matrix stability from
the analytic cyclic homology for pro-dagger algebras defined
in~\cite{Cortinas-Meyer-Mukherjee:NAHA} to the theory defined here.
The results in~\cite{Cortinas-Cuntz-Meyer-Tamme:Nonarchimedean} that
compare bornological \(\dvr\)\nb-modules to inductive systems of
\(\dvr\)\nb-modules are transferred to \(\dvf\)\nb-vector spaces
because this version will be needed later.

A \emph{bornology} on a set is a collection of subsets, called
\emph{bounded subsets}, such that finite unions and subsets of
bounded subsets are bounded, and finite sets are bounded.  A map
between bornological sets is \emph{bounded} if it maps bounded
subsets to bounded subsets.  A \emph{bornological
  \(\dvr\)\nb-module} is a \(\dvr\)\nb-module~\(M\) with a bornology
with the extra property that the \(\dvr\)\nb-submodule of~\(M\)
generated by a bounded subset is again bounded.  Let
\(\mathsf{Bor}_\dvr\) be the category of bornological
\(\dvr\)\nb-modules and bounded \(\dvr\)\nb-module homomorphisms.

A \emph{bornological \(\dvf\)\nb-vector space} is the same as a
bornological \(\dvr\)\nb-module with the extra property that
multiplication by~\(\dvgen\) is a bornological isomorphism.  In this
way, we identify the category \(\mathsf{Bor}_\dvf\) of bornological
\(\dvf\)\nb-vector spaces with a full subcategory of
\(\mathsf{Bor}_\dvr\).  A bornological \(\dvr\)\nb-module is
\emph{bornologically torsionfree} if multiplication by~\(\dvgen\) is
a bornological embedding
(see~\cite{Meyer-Mukherjee:Bornological_tf}).  \emph{As
  in~\cite{Cortinas-Meyer-Mukherjee:NAHA}, we lighten our notation
  by abbreviating ``bornologically torsionfree'' to ``torsionfree''
  for bornological \(\dvr\)\nb-modules.}

A bornological \(\dvr\)\nb-module~\(M\) is \emph{complete} if every
bounded submodule of~\(M\) is contained in a \(\dvgen\)\nb-adically
complete, bounded \(\dvr\)\nb-submodule.  A bornological
\(\dvr\)\nb-module~\(M\) is \emph{separated} if every bounded
submodule of~\(M\) is Hausdorff in the
\(\dvgen\)\nb-adic topology.  Complete bornological
\(\dvr\)\nb-modules are separated.  Let
\(\mathsf{CBor}_\dvf \subseteq \mathsf{Bor}_\dvf\) be the full
subcategory of complete bornological \(\dvf\)\nb-vector spaces.
The \emph{completion}
\[
  \comb{\phantom{I}}\colon \mathsf{Bor}_\dvf \to
  \mathsf{CBor}_\dvf,\qquad
  M\mapsto \comb{M},
\]
is defined as the left adjoint functor of the inclusion
\(\mathsf{CBor}_\dvf \hookrightarrow \mathsf{Bor}_\dvf\).  This
completion is the restriction of the completion for bornological
\(\dvr\)\nb-modules (see
\cite{Cortinas-Cuntz-Meyer-Tamme:Nonarchimedean}*{Definition~2.14});
this follows because the property of being a bornological
\(\dvf\)\nb-vector space is preserved by the completion
in~\cite{Cortinas-Cuntz-Meyer-Tamme:Nonarchimedean}.

We often use the \emph{fine bornology} on a
\(\dvr\)\nb-module~\(M\).  It is the smallest bornology: a subset is
bounded if and only if it is contained in a finitely generated
\(\dvr\)\nb-submodule.  This bornology is always complete, and it
makes~\(M\) bornologically torsionfree if~\(M\) is torsionfree as a
\(\dvr\)\nb-module.

The category of bornological \(\dvr\)\nb-modules is identified in
\cite{Cortinas-Cuntz-Meyer-Tamme:Nonarchimedean}*{Proposition~2.5}
with a full subcategory of the category of inductive systems of
\(\dvr\)\nb-modules.  We now carry this over to bornological
\(\dvf\)\nb-vector spaces.  Let \(\mathsf{Norm}_\dvf^{1/2}\) be the
category of seminormed \(\dvf\)\nb-vector spaces with bounded
\(\dvf\)\nb-linear maps as arrows.  Let
\(\mathsf{Ind}(\mathsf{Norm}_\dvf^{1/2})\) be the category of
inductive systems over it.  This is the appropriate target category
for the dissection functor on \(\mathsf{Bor}_\dvf\) because we want
the \(\dvf\)\nb-vector space structure to be a levelwise feature of
the inductive systems.

Let~\(M\) be a bornological \(\dvf\)\nb-vector space and let
\(S\subseteq M\) be a bounded \(\dvr\)\nb-submodule.  Then
\(S\otimes \dvf \subseteq M\), and we give \(S\otimes \dvf\) the
\emph{gauge seminorm} of~\(S\); this is the unique seminorm that
has~\(S\) as its unit ball.  The bounded \(\dvr\)\nb-submodules form
a directed set, and \(S\mapsto S\otimes \dvf\) is an inductive
system of seminormed \(\dvf\)\nb-vector spaces indexed by it.  The
\emph{dissection functor}
\begin{equation}
  \label{eq:dissection_over_F}
  \diss\colon \mathsf{Bor}_\dvf \to
  \mathsf{Ind}(\mathsf{Norm}_\dvf^{1/2})
\end{equation}
maps a bornological \(\dvf\)\nb-vector space~\(M\) to this inductive
system.  The inductive system \(\diss(M)\) has the extra property
that the structure maps \(S\otimes \dvf \to T\otimes \dvf\) for
\(S\subseteq T\) are injective and contractive.

The category of bornological \(\dvf\)\nb-vector spaces is complete
and cocomplete.  Taking inductive limits defines a functor
\begin{equation}
  \label{eq:injlim_over_F}
  \varinjlim\colon
  \mathsf{Ind}(\mathsf{Norm}_\dvf^{1/2}) \to  \mathsf{Bor}_\dvf.
\end{equation}

\begin{lemma}
  \label{lem:diss_injlim_1}
  Let~\(M\) be a bornological \(\dvf\)\nb-vector space.  There is a
  natural isomorphism \(\varinjlim \diss(M) \cong M\).
\end{lemma}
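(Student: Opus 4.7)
The plan is to exhibit the natural isomorphism as the universal map $\varphi_M\colon \varinjlim\diss(M)\to M$ induced by the family of inclusions $\iota_S\colon S\otimes\dvf\hookrightarrow M$ indexed by bounded $\dvr$-submodules $S\subseteq M$, and then verify it is a bornological isomorphism.

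First I would check that the $\iota_S$ assemble into a cocone in $\mathsf{Ind}(\mathsf{Norm}_\dvf^{1/2})$ when $M$ is regarded as a bornological $\dvf$-vector space. Since the gauge seminorm on $S\otimes\dvf$ has $S$ as its unit ball and $S$ is a bounded $\dvr$-submodule of $M$, each $\iota_S$ is bounded; moreover, for $S\subseteq T$ the structure map $S\otimes\dvf\to T\otimes\dvf$ sends the unit ball into the unit ball, and the triangle with $\iota_S$ and $\iota_T$ commutes because all three maps are the inclusion into $M$. This yields $\varphi_M$ by the universal property of $\varinjlim$, and the construction is natural in $M$ since a bounded map $f\colon M\to N$ sends each bounded $S\subseteq M$ into some bounded $T\subseteq N$, inducing a morphism of inductive systems $\diss(M)\to\diss(N)$.

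Next I would verify that $\varphi_M$ is bijective on underlying $\dvf$-vector spaces. Surjectivity is immediate: for any $m\in M$ the singleton $\{m\}$ is bounded, so $S\defeq\dvr\cdot m$ is a bounded $\dvr$-submodule and $m$ lies in the image of $S\otimes\dvf$. Injectivity uses the observation, already recorded after~\eqref{eq:dissection_over_F}, that the structure maps of $\diss(M)$ are injective, so the colimit of $\diss(M)$ as a plain $\dvf$-vector space is just the union $\bigcup_S S\otimes\dvf = M$.

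The remaining and main point is to check that the bornologies agree. The bornology on $\varinjlim\diss(M)$ is generated by the images of the unit balls of the $S\otimes\dvf$ and their $\dvr$-submodules, hence precisely by the bounded $\dvr$-submodules $S\subseteq M$ together with their $\dvgen$-rescalings (which must be admitted since we are in $\mathsf{Bor}_\dvf$). Conversely, any bounded subset of~$M$ is contained in a bounded $\dvr$-submodule~$S$, and $S$ is the image of the unit ball of $S\otimes\dvf$; so the two bornologies on the common underlying $\dvf$-vector space coincide. The subtle step is this last identification, since one must be careful how $\varinjlim$ in $\mathsf{Bor}_\dvf$ is computed from an inductive system in $\mathsf{Ind}(\mathsf{Norm}_\dvf^{1/2})$: bounded subsets in the colimit are those absorbed by the image of a unit ball at some index, and this matches the description of bounded subsets of~$M$ as subsets of some $\dvgen^{-n}S$. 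Once this match is made, $\varphi_M$ and its inverse are both bounded, and naturality is automatic from the construction.
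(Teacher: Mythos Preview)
Your proposal is correct and follows essentially the same approach as the paper: construct the canonical map from the inclusions \(S\otimes\dvf\hookrightarrow M\), then verify it is a bornological isomorphism by noting that every bounded subset of~\(M\) lies in some bounded \(\dvr\)-submodule. The paper compresses this into two sentences, whereas you spell out the cocone condition, the bijectivity, and the match of bornologies; these expansions are all routine and accurate.
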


\begin{proof}
  The inclusions \(S\otimes \dvf \to M\) for bounded
  \(\dvr\)\nb-submodules \(S\subseteq M\) combine to a canonical
  injective map \(\varinjlim \diss(M) \to M\).  Since any bounded
  subset of~\(M\) is contained in a bounded \(\dvr\)\nb-submodule,
  this map is a bornological isomorphism.
\end{proof}

\begin{lemma}
  \label{lem:diss_injlim_2}
  Let \(M = (M_i)_{i\in I}\) be an inductive system of seminormed
  \(\dvf\)\nb-vector spaces.  There is a canonical map
  \(\vartheta\colon M \to \diss \bigl(\varinjlim M)\).  The following are
  equivalent:
  \begin{enumerate}
  \item \label{lem:diss_injlim_2_1}%
    \(\vartheta\) is an isomorphism;
  \item  \label{lem:diss_injlim_2_2}%
    \(M\) is isomorphic to an inductive system with injective
    structure maps;
  \item \label{lem:diss_injlim_2_3}%
    for each \(i\in I\), there is \(j\in I_{\ge i}\) such that for
    all \(k\in I_{\ge j}\), the structure maps \(M_i \to M_j\) and
    \(M_i \to M_k\) have the same kernel.
  \end{enumerate}
\end{lemma}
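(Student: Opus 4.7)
The plan is to establish the cycle (i) $\Rightarrow$ (ii) $\Rightarrow$ (iii) $\Rightarrow$ (i). First I construct $\vartheta$ explicitly: for each $i \in I$, the canonical map $\pi_i \colon M_i \to \varinjlim M$ carries the unit ball $B_i$ to a bounded $\dvr$\nb-submodule $S_i \subseteq \varinjlim M$, yielding a level map $\vartheta_i \colon M_i \to S_i \otimes \dvf$. This identifies with the quotient $M_i \twoheadrightarrow M_i/K_i$ equipped with the quotient seminorm, where $K_i \defeq \ker \pi_i$. The subsystem $(S_i)_{i \in I}$ is cofinal in the bounded $\dvr$\nb-submodules of $\varinjlim M$ (up to rescaling, which does not change the Ind-isomorphism class), so $\diss(\varinjlim M)$ is Ind-isomorphic to $(M_i/K_i)_{i \in I}$; this target has injective structure maps since, for $i \le i'$, the kernel of the composition $M_i \to M_{i'} \to M_{i'}/K_{i'}$ equals $K_i$, using that an element of $M_i$ vanishes in $\varinjlim M$ iff its image in $M_{i'}$ lies in $K_{i'}$.

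The implication (i) $\Rightarrow$ (ii) is then immediate, since $\diss$ always lands in systems with injective structure maps. For (iii) $\Rightarrow$ (i), the hypothesis yields for each $i$ an index $j \ge i$ with $K_i = \ker(M_i \to M_j)$, so the structure map $M_i \to M_j$ factors through an arrow $\tilde\psi_i \colon M_i/K_i \to M_j$. The $\tilde\psi_i$ assemble into an Ind-morphism $\psi \colon (M_i/K_i) \to (M_i)$, and a direct check shows that both compositions $\psi \circ \vartheta$ and $\vartheta \circ \psi$ reduce, after passing to sufficiently large cofinal indices, to structure maps of the respective systems, which represent identities in the Ind-category.

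The main obstacle is (ii) $\Rightarrow$ (iii): translating an abstract Ind-isomorphism into concrete information about kernels. Suppose $\varphi \colon (M_i) \to (N_\ell)$ is an Ind-isomorphism with $(N_\ell)$ having injective structure maps. Unpacking the definition, $\varphi$ is given by maps $\varphi_i \colon M_i \to N_{f(i)}$ and its inverse by maps $\psi_\ell \colon N_\ell \to M_{g(\ell)}$. The relation $\psi \circ \varphi = \id$ evaluated at level $i$ yields some $j \ge i$ such that the structure map $M_i \to M_j$ equals the composition $M_i \to N_{f(i)} \to M_{g(f(i))} \to M_j$. Since filtered colimits of injections in $\mathsf{Norm}_\dvf^{1/2}$ remain injections, $N_{f(i)} \hookrightarrow \varinjlim N$ is injective, and the Ind-isomorphism gives $\varinjlim N \cong \varinjlim M$; hence $\ker(M_i \to N_{f(i)}) = \ker(M_i \to \varinjlim M)$. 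The factorization identity then forces $\ker(M_i \to M_j) = \ker(M_i \to \varinjlim M)$, and for any $k \ge j$ the chain $\ker(M_i \to M_j) \subseteq \ker(M_i \to M_k) \subseteq \ker(M_i \to \varinjlim M)$ collapses, yielding (iii). The delicate bookkeeping throughout concerns tracking cofinal indices and unpacking invertibility in $\mathsf{Ind}(\mathsf{Norm}_\dvf^{1/2})$ into equalities of concrete linear maps between chosen representatives.
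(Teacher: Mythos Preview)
Your argument is correct and follows essentially the same strategy as the paper. Both proofs identify \(\diss(\varinjlim M)\) with the quotient system \((M_i/K_i)_{i\in I}\) where \(K_i=\ker(M_i\to\varinjlim M)\), and both invert \(\vartheta\) under hypothesis~(iii) by observing that the structure map \(M_i\to M_j\) factors through \(M_i/K_i\). The only noteworthy difference is in (ii)\(\Rightarrow\)(iii): the paper stays inside the Ind-category, comparing the two composites \(M_i\to N_{\alpha(i)}\hookrightarrow N_\alpha\) and \(M_i\to M_k\to N_{\alpha(k)}\hookrightarrow N_\alpha\) and using injectivity of \(N_{\alpha(i)}\hookrightarrow N_\alpha\) directly; you instead pass to the colimit, using that \(N_{f(i)}\hookrightarrow\varinjlim N\) is injective for an injective system and that an Ind-isomorphism induces an isomorphism on colimits. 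Your route is slightly more conceptual and avoids chasing a second index \(\alpha\ge\alpha(k)\), at the cost of invoking the colimit functor; the paper's route is self-contained within the Ind-category. Both yield the same conclusion with comparable effort.
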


\begin{proof}
  Let \(B(M_i)\) denote the unit ball of~\(M_i\).  Let
  \(\varphi_i\colon M_i \to \varinjlim M\) for \(i\in I\) be the
  canonical maps.  Any bounded \(\dvr\)\nb-submodule
  of~\(\varinjlim M\) is contained in
  \(\varphi_i(\dvgen^{-n}\cdot B(M_i))\) for some \(n\in\N\),
  \(i\in I\).  Therefore, \(\diss \varinjlim M\) is
  isomorphic to the inductive system of seminormed spaces
  \(\varphi_i(B(M_i))\otimes \dvf = \varphi_i(M_i)\), equipped with
  the gauge seminorm of \(\varphi_i(\dvgen^{-n}B(M_i))\), indexed by
  \(\N\times I\).  Actually, since rescaling the norm gives an
  isomorphic seminormed vector space, we may drop the factor~\(\N\)
  and reindex \(\diss \varinjlim M\) using~\(I\) itself.
  The canonical maps \(M_i \to \varphi_i(M_i)\) combine to a
  canonical morphism
  \(\vartheta\colon M \to \diss \varinjlim M\) in
  \(\mathsf{Ind}(\mathsf{Norm}_\dvf^{1/2})\).

  If~\(M\) has injective structure maps, then the maps
  \(M_i \to \varphi_i(M_i)\) above are isometric isomorphisms for
  all \(i\in I\) (and \(n=0\)).  Then~\(\vartheta\) is an
  isomorphism in \(\mathsf{Ind}(\mathsf{Norm}_\dvf^{1/2})\).
  Since~\(\vartheta\) is natural, it remains an isomorphism if~\(M\)
  is only \emph{isomorphic} to an inductive system with injective
  structure
  maps.  Since the structure maps in~\(\diss (W)\) for a
  bornological \(\dvf\)\nb-vector space are injective,
  \ref{lem:diss_injlim_2_1} and~\ref{lem:diss_injlim_2_2} are
  equivalent.

  Next we prove that \ref{lem:diss_injlim_2_2} implies
  \ref{lem:diss_injlim_2_3}.  Let \(N= (N_\alpha)_{\alpha\in J}\) be
  an inductive system of seminormed \(\dvf\)\nb-vector spaces with
  injective structure maps that is isomorphic to~\(M\).  For
  \(i\in I\) and \(\alpha\in J\), the isomorphism \(M\cong N\) gives
  us maps \(M_i \to N_{\alpha(i)}\) and
  \(N_\alpha \to M_{j(\alpha)}\) for some \(\alpha(i)\in J\),
  \(j(\alpha)\in I\).  There is \(j\ge j(\alpha(i))\) such that the
  composite map
  \(M_i \to N_{\alpha(i)} \to M_{j(\alpha(i))} \to M_j\) is the
  structure map of the inductive system.  Let \(k\ge j\).  Then
  there is \(\alpha \ge \alpha(k)\) so that the composite maps
  \(M_i \to M_k \to N_{\alpha(k)} \hookrightarrow N_\alpha\) and
  \(M_i \to N_{\alpha(i)} \hookrightarrow N_\alpha\) are equal.
  Since the map \(N_{\alpha(i)} \hookrightarrow N_\alpha\) is
  injective, the kernel of the map \(M_i \to M_k\) is contained in
  the kernel of the map \(M_i \to N_{\alpha(i)}\), which is
  contained in the kernel of the structure map \(M_i \to M_j\).

  Finally, we prove that~\ref{lem:diss_injlim_2_3}
  implies~\ref{lem:diss_injlim_2_2}.  For \(i\in I\), let
  \(K_i \defeq \bigcup_{j\in I_{\ge i}} \ker (M_i \to M_j)\).  Let
  \(M'_i \defeq M_i / K_i\).  The structure maps \(M_i \to M_j\) for
  \(i \le j\) descend to injective maps
  \(M'_i \hookrightarrow M'_j\).  These form an inductive
  system~\(M'\) of seminormed \(\dvf\)\nb-vector spaces.  The
  quotient maps \(M_i \to M'_i\) form a morphism of inductive
  systems.  The condition~\ref{lem:diss_injlim_2_3} implies that
  \(K_i = \ker (M_i \to M_j)\) for some \(j\in I_{\ge i}\).
  Therefore, the structure map \(M_i \to M_j\) descends to a bounded
  \(\dvf\)\nb-linear map \(M'_i \to M_j\).  These maps form a
  morphism of inductive systems, which is inverse to the canonical
  morphism \(M\to M'\).
\end{proof}

\begin{proposition}
  \label{pro:diss_injlim_Bor}
  The inductive limit functor in~\eqref{eq:injlim_over_F} is left
  adjoint to the dissection functor in~\eqref{eq:dissection_over_F}.
  The dissection functor restricts to an equivalence of categories
  between \(\mathsf{Bor}_\dvf\) and the full subcategory of
  \(\mathsf{Ind}(\mathsf{Norm}_\dvf^{1/2})\) consisting of those
  inductive systems with injective structure maps.  Alternatively,
  we may use the full subcategory of those inductive systems that
  satisfy the equivalent conditions in
  Lemma~\textup{\ref{lem:diss_injlim_2}}.
\end{proposition}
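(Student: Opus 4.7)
The statement has three components: the adjunction, the equivalence of the dissection functor onto a certain subcategory, and the alternative description of that subcategory. My plan is to set up the adjunction first and then extract the equivalence as a formal consequence of Lemmas \ref{lem:diss_injlim_1} and \ref{lem:diss_injlim_2}, which already produce what will be the counit and unit.

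For the adjunction, I would construct a natural bijection
\[
  \Hom_{\mathsf{Bor}_\dvf}\!\bigl(\varinjlim N, M\bigr) \;\cong\; \Hom_{\mathsf{Ind}(\mathsf{Norm}_\dvf^{1/2})}\!\bigl(N, \diss(M)\bigr).
\]
From left to right, a bounded \(\dvf\)\nb-linear map \(f\colon \varinjlim N \to M\) sends the unit (semi-)ball of each \(N_i\) to a bounded subset of~\(M\), which is contained in some bounded \(\dvr\)\nb-submodule \(S_i \subseteq M\); the restriction of~\(f\) to~\(N_i\) then factors through \(S_i\otimes \dvf\) as a bounded map for the gauge seminorm, and these factorisations assemble into a morphism \(N \to \diss(M)\) of inductive systems. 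In the other direction, a morphism \(g\colon N \to \diss(M)\) passes to the inductive limit and composes with the canonical isomorphism \(\varinjlim \diss(M) \cong M\) of Lemma~\ref{lem:diss_injlim_1} to give a bounded map \(\varinjlim N \to M\). These constructions are mutually inverse, and naturality follows at once from the universal property of the inductive limit.

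The equivalence then follows formally. Lemma~\ref{lem:diss_injlim_1} exhibits the counit of the adjunction as a natural isomorphism, which is equivalent to the right adjoint~\(\diss\) being fully faithful. A fully faithful functor is an equivalence onto its essential image, so it only remains to identify that image. An object~\(N\) lies in the essential image precisely when \(N\cong \diss(M)\) for some~\(M\), which, by the standard adjunction formalism, is equivalent to the unit \(\vartheta_N\colon N \to \diss(\varinjlim N)\) being an isomorphism. Lemma~\ref{lem:diss_injlim_2} identifies this as the full subcategory of inductive systems satisfying the three equivalent conditions (i)–(iii). For the alternative description, note that condition~(ii) explicitly says ``isomorphic to an inductive system with injective structure maps''; thus the strict subcategory of those with genuinely injective structure maps and the larger subcategory satisfying~(ii) have the same essential image in \(\mathsf{Ind}(\mathsf{Norm}_\dvf^{1/2})\), so~\(\diss\) restricts to an equivalence in either formulation.

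The main obstacle, such as it is, is purely formal bookkeeping: one must verify that the two constructions on Hom sets are mutually inverse (in particular, that rescaling seminorms by powers of~\(\dvgen\) does not affect the morphism in \(\mathsf{Ind}(\mathsf{Norm}_\dvf^{1/2})\)) and that the counit produced by this adjunction agrees with the natural map of Lemma~\ref{lem:diss_injlim_1} under the identifications used. Nothing deep happens; the content of the proposition was already absorbed into the two preceding lemmas, and what remains is assembling them.
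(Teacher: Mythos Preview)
Your proposal is correct and follows essentially the same route as the paper: establish the adjunction, use that the counit of Lemma~\ref{lem:diss_injlim_1} is an isomorphism to conclude \(\diss\) is fully faithful, then identify the essential image via Lemma~\ref{lem:diss_injlim_2}. The only difference is cosmetic: the paper verifies the adjunction by checking the triangle identities for the unit~\(\vartheta\) and counit directly, whereas you build the Hom-set bijection explicitly; these are equivalent formulations and neither buys anything over the other here.
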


\begin{proof}
  The canonical map \(\vartheta\colon M \to \diss \varinjlim M\) for
  \(M\in \mathsf{Ind}(\mathsf{Norm}_\dvf^{1/2})\) and the canonical
  isomorphism \(\varinjlim \diss(M) \cong M\) for
  \(M\in \mathsf{Bor}_\dvf\) in Lemmas \ref{lem:diss_injlim_1}
  and~\ref{lem:diss_injlim_2} satisfy the triangle identities for
  the unit and counit of an adjunction (see
  \cite{Riehl:Categories_context}*{Definition 4.2.5}).  Since the
  counit is a natural isomorphism, it follows that \(\diss\) is an
  equivalence of categories onto its essential range.  The essential
  range is described in Lemma~\ref{lem:diss_injlim_2}, which also
  shows that it is equivalent to the full subcategory of inductive
  systems with injective structure maps.
\end{proof}

Next we adapt Proposition~\ref{pro:diss_injlim_Bor} to the category
\(\mathsf{CBor}_\dvf\) of complete bornological \(\dvf\)\nb-vector
spaces.  Let \(\mathsf{Ban}_\dvf\) be the category of Banach
\(\dvf\)\nb-vector spaces with bounded \(\dvf\)\nb-linear maps as
arrows, and let \(\mathsf{Ind}(\mathsf{Ban}_\dvf)\) be its category
of inductive systems.  The dissection functor
\begin{equation}
  \label{eq:dissection_over_F_complete}
  \diss\colon \mathsf{CBor}_\dvf \to
  \mathsf{Ind}(\mathsf{Ban}_\dvf)
\end{equation}
is defined similarly, but we only use \(\dvgen\)\nb-adically
complete bounded \(\dvr\)\nb-submodules \(S\subseteq M\) instead.
If~\(M\) is complete, then any bounded subset of~\(M\) is contained
in a \(\dvgen\)\nb-adically complete bounded \(\dvr\)\nb-submodule.
Therefore, the directed set used in the ``complete'' dissection
functor~\eqref{eq:dissection_over_F_complete} is cofinal in the
directed set used in the ``incomplete'' dissection
functor~\eqref{eq:dissection_over_F}.  Therefore, both definitions
give naturally isomorphic inductive systems.  We will not
distinguish them in our notation.
The dissection functor in~\eqref{eq:dissection_over_F_complete} is
right adjoint to the separated inductive limit functor
\begin{equation}
  \label{eq:injlim_over_F_complete}
  \varinjlim\nolimits^s \colon
  \mathsf{Ind}(\mathsf{Ban}_\dvf) \to  \mathsf{CBor}_\dvf.
\end{equation}
This maps an inductive system to the \emph{separated quotient} of
its inductive limit in \(\mathsf{Bor}_\dvf\); the separated quotient
is the quotient by~\(\overline{\{0\}}\), the bornological closure
of~\(0\).  This is the largest quotient of the usual inductive limit
that is complete in its natural bornology.  The separated inductive
limit is the inductive limit in the categorical sense in
\(\mathsf{CBor}_\dvf\).  If~\(M\) is an inductive system of Banach
spaces with injective structure maps, then \(\varinjlim M\) is
already separated, so that \(\varinjlim\nolimits^s M = \varinjlim M\).

\begin{proposition}
  \label{pro:diss_injlim_CBor}
  The inductive limit functor in~\eqref{eq:injlim_over_F_complete}
  is left adjoint to the dissection functor
  in~\eqref{eq:dissection_over_F_complete}.  The dissection functor
  restricts to an equivalence of categories between
  \(\mathsf{CBor}_\dvf\) and the full subcategory of
  \(\mathsf{Ind}(\mathsf{Ban}_\dvf)\) consisting of those inductive
  systems with injective structure maps.  Alternatively, we may use
  the full subcategory of those inductive systems of Banach
  \(\dvf\)\nb-vector spaces that satisfy the equivalent conditions
  in Lemma~\textup{\ref{lem:diss_injlim_2}}.
\end{proposition}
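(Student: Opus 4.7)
The plan is to mirror the argument for Proposition~\ref{pro:diss_injlim_Bor}, adjusting only for the fact that the target is now $\mathsf{CBor}_\dvf$ and that inductive systems live in $\mathsf{Ind}(\mathsf{Ban}_\dvf)$. First I would construct a unit and counit for the putative adjunction. The counit $\varinjlim\nolimits^s \diss(M) \to M$ for $M \in \mathsf{CBor}_\dvf$ is inherited from Lemma~\ref{lem:diss_injlim_1}: the canonical map $\varinjlim \diss(M) \to M$ is a bornological isomorphism, and since~$M$ is already complete, the separated inductive limit agrees with the ordinary inductive limit in $\mathsf{Bor}_\dvf$. The unit $\vartheta\colon M \to \diss\varinjlim\nolimits^s M$ for an inductive system $M = (M_i)_{i\in I}$ of Banach spaces is built exactly as in the proof of Lemma~\ref{lem:diss_injlim_2}: send each~$M_i$ to the image $\varphi_i(M_i) \subseteq \varinjlim\nolimits^s M$ equipped with the quotient norm, which is a Banach space because it is a quotient of the Banach space~$M_i$ modulo the closure of zero.

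Next I would verify the triangle identities. These are formal and reduce to the triangle identities already proved in the incomplete setting, because the diagrams live in $\mathsf{Ind}(\mathsf{Norm}_\dvf^{1/2})$ anyway and the forgetful functor $\mathsf{Ind}(\mathsf{Ban}_\dvf) \to \mathsf{Ind}(\mathsf{Norm}_\dvf^{1/2})$ is faithful. Once the adjunction is established, the counit being a natural isomorphism implies that $\diss\colon \mathsf{CBor}_\dvf \to \mathsf{Ind}(\mathsf{Ban}_\dvf)$ is fully faithful and hence an equivalence onto its essential image.

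Finally, I would identify the essential image. The argument from Lemma~\ref{lem:diss_injlim_2} transfers verbatim: $\vartheta$ is an isomorphism if and only if each $M_i \to \varphi_i(M_i)$ is an isometric isomorphism of Banach spaces, which happens exactly when the equivalent conditions of Lemma~\ref{lem:diss_injlim_2} hold. In the Banach setting one must additionally note that the quotients $M'_i \defeq M_i / K_i$ appearing there are still Banach spaces: this is automatic when $K_i = \ker(M_i \to M_j)$ is already closed, which is guaranteed by condition~\ref{lem:diss_injlim_2_3} since kernels of bounded linear maps between Banach spaces are closed.

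The only genuinely delicate point is the passage from the incomplete dissection (using arbitrary bounded $\dvr$\nb-submodules) to the complete dissection (using $\dvgen$\nb-adically complete ones); here I would invoke the cofinality remark already made before the statement of the proposition, which ensures both versions yield naturally isomorphic inductive systems and that no information is lost in replacing seminormed spaces by their Banach completions. Once this identification is in hand, the rest of the proof is a routine transcription of the arguments for Proposition~\ref{pro:diss_injlim_Bor}.
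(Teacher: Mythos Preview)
Your proposal is correct and takes essentially the same approach as the paper, which literally reads ``The proof of Proposition~\ref{pro:diss_injlim_Bor} carries over.'' You have simply spelled out the details that the authors leave implicit: constructing the unit and counit by transcribing Lemmas~\ref{lem:diss_injlim_1} and~\ref{lem:diss_injlim_2}, checking the triangle identities, and observing that the essential-image argument still goes through because the subspaces $K_i$ in Lemma~\ref{lem:diss_injlim_2} remain closed in the Banach setting.
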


\begin{proof}
  The proof of Proposition~\ref{pro:diss_injlim_Bor} carries over.
\end{proof}

The \emph{completion} of an inductive system
\((M_i ,\norm{\cdot}_i)_{i \in I}\) of seminormed \(\dvf\)\nb-vector
spaces is the inductive system of Banach \(\dvf\)\nb-vector spaces
\(\bigl(\comb{M_i}\bigr)_{i\in I}\), where~\(\comb{M_i}\) is the
Hausdorff completion of~\(M_i\).  This defines a functor
\(\mathsf{Ind}(\mathsf{Norm}_\dvf^{1/2}) \to
\mathsf{Ind}(\mathsf{Ban}_\dvf)\) that is left adjoint to the
inclusion functor
\(\mathsf{Ind}(\mathsf{Ban}_\dvf) \hookrightarrow
\mathsf{Ind}(\mathsf{Norm}_\dvf^{1/2})\).

The completions in \(\mathsf{Bor}_\dvf\) and
\(\mathsf{Ind}(\mathsf{Norm}_\dvf^{1/2})\) are related by a natural
isomorphism
\begin{equation}
  \label{eq:compare_completions}
  \comb{M}
  \cong \varinjlim\nolimits^s \comb{\diss(M)}
  = \varinjlim \comb{\diss(M)} \bigm/ \overline{\{0\}}.
\end{equation}
This follows because
\(M\mapsto \varinjlim\nolimits^s \comb{\diss(M)}\) is a
left adjoint functor to the inclusion
\(\mathsf{CBor}_\dvf \hookrightarrow \mathsf{Bor}_\dvf\).
The unit of the adjunction
\begin{equation}
  \label{eq:diss_lim_adjunction_CBor}
  \begin{tikzcd}
    \mathsf{CBor}_\dvf
    \ar[r, shift right=1ex, "\diss"']
    \ar[r, phantom,  "\bot"]
    &
    \mathsf{Ind}(\mathsf{Ban}_\dvf)
    \ar[l, shift right=1ex, swap,  "\varinjlim\nolimits^s"]
  \end{tikzcd}
\end{equation}
gives a natural map
\begin{equation}
  \label{eq:diss_comb_to_comb_diss}
  \comb{\diss(M)} \to
  \diss \bigl(\varinjlim\nolimits^s \comb{\diss(M)}\bigr)
  = \diss \comb{M}.
\end{equation}

\begin{proposition}
  \label{pro:diss_comb}
  Let \(M\in \mathsf{Bor}_\dvf\).  The following are equivalent:
  \begin{enumerate}
  \item \label{en:diss_comb_1}%
    the canonical map \(\comb{\diss(M)} \to \diss \comb{M}\)
    in~\eqref{eq:diss_comb_to_comb_diss} is an isomorphism;
  \item \label{en:diss_comb_2}%
    \(\comb{\diss(M)}\) is in the essential range of
    \(\diss\colon \mathsf{CBor}_\dvf \to
    \mathsf{Ind}(\mathsf{Ban}_\dvf)\);
  \item \label{en:diss_comb_3}%
    for any bounded \(\dvr\)\nb-submodule \(S\subseteq M\), there is
    a bounded \(\dvr\)\nb-submodule \(T\subseteq M\) with
    \(S\subseteq T\) such that for any bounded \(\dvr\)\nb-submodule
    \(U\subseteq M\) with \(T\subseteq U\), the maps
    \(\coma{S} \to \coma{T}\) and \(\coma{S} \to \coma{U}\) have the
    same kernel; here \(\coma{S}\) is the \(\dvgen\)\nb-adic
    completion of~\(S\).
  \end{enumerate}
\end{proposition}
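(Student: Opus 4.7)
The plan is to combine the adjunction from Proposition~\ref{pro:diss_injlim_CBor}, the description of its essential range via Lemma~\ref{lem:diss_injlim_2}, and the observation that the $\dvgen$-adic completion of any torsionfree $\dvr$-module remains torsionfree. The first ingredient yields \ref{en:diss_comb_1} $\Leftrightarrow$ \ref{en:diss_comb_2} by a formal adjunction argument, the second reformulates \ref{en:diss_comb_2} as a stabilisation condition on kernels in $\mathsf{Ban}_\dvf$, and the third collapses this into the $\dvr$-module condition~\ref{en:diss_comb_3}.

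First I would observe that the map in~\eqref{eq:diss_comb_to_comb_diss} is the unit of the adjunction~\eqref{eq:diss_lim_adjunction_CBor} evaluated at $\comb{\diss(M)}$, after identifying $\varinjlim\nolimits^s \comb{\diss(M)}$ with $\comb{M}$ via~\eqref{eq:compare_completions}. Since the counit of this adjunction is a natural isomorphism, a standard categorical fact ensures that the unit at $N$ is an isomorphism precisely when $N$ lies in the essential range of the right adjoint $\diss$; this yields \ref{en:diss_comb_1} $\Leftrightarrow$ \ref{en:diss_comb_2}. Then Proposition~\ref{pro:diss_injlim_CBor} combined with Lemma~\ref{lem:diss_injlim_2}\ref{lem:diss_injlim_2_3} reformulates \ref{en:diss_comb_2} as: for each bounded $\dvr$-submodule $S \subseteq M$, there is $T \supseteq S$ such that the kernels of the structure maps $\coma{S} \otimes \dvf \to \coma{T} \otimes \dvf$ and $\coma{S} \otimes \dvf \to \coma{U} \otimes \dvf$ in $\mathsf{Ban}_\dvf$ agree for all bounded $U \supseteq T$.

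The remaining task, matching this $\dvf$-vector space kernel condition with the $\dvr$-module kernel condition~\ref{en:diss_comb_3}, is the substantive part; the apparent obstacle is that passing between $\dvr$-module kernels and $\dvf$-vector space kernels involves a $\dvgen$-saturation that could in principle lose information. The resolution is the key observation that every bounded $\dvr$-submodule $S \subseteq M$ is torsionfree (as a submodule of the $\dvf$-vector space $M$) and that $\dvgen$-adic completion preserves torsionfreeness: a short calculation with compatible sequences shows that if $x = (x_n) \in \coma{S}$ satisfies $\dvgen x = 0$, then $\dvgen x_n \in \dvgen^n S$ combined with the torsionfreeness of $S$ gives $x_n \in \dvgen^{n-1} S$, and the compatibility relation $x_n \equiv x_{n+1} \pmod{\dvgen^n}$ together with $x_{n+1} \in \dvgen^n S$ then forces $x_n = 0$. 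Hence $\coma{S}, \coma{T}, \coma{U}$ are all torsionfree, so $K_T \defeq \ker(\coma{S} \to \coma{T})$ is $\dvgen$-saturated in $\coma{S}$ because $\coma{S}/K_T$ embeds into the torsionfree $\coma{T}$; the kernel of the localised map $\coma{S} \otimes \dvf \to \coma{T} \otimes \dvf$ therefore equals $K_T \otimes_\dvr \dvf$, and the equality $K_T \otimes_\dvr \dvf = K_U \otimes_\dvr \dvf$ together with $K_T \subseteq K_U$ and the $\dvgen$-saturation of $K_U$ forces $K_T = K_U$. This gives \ref{en:diss_comb_2} $\Leftrightarrow$ \ref{en:diss_comb_3}.
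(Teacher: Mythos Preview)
Your proof is correct and follows the same route as the paper: apply Lemma~\ref{lem:diss_injlim_2} (in its Banach version, via Proposition~\ref{pro:diss_injlim_CBor}) to the inductive system $\comb{\diss(M)}$ to get \ref{en:diss_comb_1}$\Leftrightarrow$\ref{en:diss_comb_2}$\Leftrightarrow$(kernel-stabilisation in $\mathsf{Ban}_\dvf$), and then translate the latter into~\ref{en:diss_comb_3}. The paper compresses the translation into one line---$\coma{S}$ is the unit ball of $\comb{S\otimes\dvf}$, so kernels on unit balls and on the ambient Banach spaces determine one another---which subsumes your torsionfreeness/saturation argument; note also that in your final sentence it is the saturation of $K_T$ (which you did establish), not of $K_U$, that is needed to deduce $K_U\subseteq K_T$.
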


\begin{proof}
  This follows from the equivalence between the three conditions in
  Lemma~\ref{lem:diss_injlim_2}, applied to inductive systems of
  Banach spaces.  We only have to check that the third condition in
  this proposition translates to the third condition in
  Lemma~\ref{lem:diss_injlim_2}.  Indeed, since~\(\coma{S}\) is
  isomorphic to the unit ball of \(\comb{S\otimes\dvf}\), the maps
  \(\coma{S} \to \coma{T}\) and \(\coma{S} \to \coma{U}\) have the
  same kernel if and only if
  \(\ker \bigl(\comb{S\otimes\dvf} \to \comb{T\otimes\dvf}\bigr) =
  \ker \bigl(\comb{S\otimes\dvf} \to \comb{U\otimes\dvf}\bigr)\).
\end{proof}

The advantage of working in the category of inductive systems of
Banach \(\dvf\)\nb-vector spaces is that its completion functor is
just levelwise.  This variant of the completion will be used in the
next section, and it deserves a name of its own:

\begin{definition}
  \label{quasi:completion}
  The \emph{quasi-completion} of a bornological \(\dvf\)\nb-vector
  space~\(M\) is the inductive system of Banach \(\dvf\)\nb-vector
  spaces \(\comb{\diss(M)}\).
\end{definition}

We will need the following in the proof of Theorem \ref{the:homotopy_invariance}.

\begin{lemma}
  \label{lem:tensor-exact}
  Let \(f \colon M \to N\) be a bornological embedding between
  complete, bornologically torsionfree \(\dvr\)-modules.  Then for
  an arbitrary complete, torsionfree bornological
  \(\dvr\)\nb-module~\(D\), the induced map
  \(f \otimes 1_D \colon M \hot D \to N \hot D\) is a bornological
  embedding.
\end{lemma}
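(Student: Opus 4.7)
The plan is to reduce the statement to a computation on bounded, $\dvgen$-adically complete, torsionfree submodules, where flatness of torsionfree $\dvr$-modules over the DVR $\dvr$ can be exploited. Since $D$ is complete and torsionfree, it is the separated inductive limit in $\mathsf{CBor}_\dvr$ of its bounded, $\dvgen$-adically complete, torsionfree submodules $Q$; as the completed tensor product $\hot$ preserves such colimits in each variable, it suffices to prove the statement with $D$ replaced by any such $Q$. Similarly, $M \hot Q$ is the separated inductive limit, over bounded $\dvgen$-adically complete $P \subseteq M$, of the $\dvgen$-adic completions $\coma{P \otimes_\dvr Q}$ of $P \otimes_\dvr Q$, and likewise for $N \hot Q$ with $P' \subseteq N$.

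For a fixed bounded $\dvgen$-adically complete $P \subseteq M$, the bornological embedding hypothesis on $f$ should allow us to choose a bounded $\dvgen$-adically complete $P' \subseteq N$ containing $f(P)$ such that the $\dvgen$-adic filtration $\{\dvgen^n P\}_n$ is cofinal with $\{\dvgen^n P' \cap f(P)\}_n$, i.e., the $\dvgen$-adic topology on $P$ coincides with the subspace topology on $f(P) \subseteq P'$. Since $Q$ is torsionfree and hence flat over $\dvr$, the map $P \otimes_\dvr Q \to P' \otimes_\dvr Q$ is injective, and the cofinality of $\dvgen$-adic filtrations transfers to the tensor product. Taking $\dvgen$-adic completions then yields an injection $\coma{P \otimes Q} \hookrightarrow \coma{P' \otimes Q}$ whose image carries the subspace bornology. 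Assembling this through the two separated colimits proves that $f \otimes 1_D \colon M \hot D \to N \hot D$ is a bornological embedding.

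The main obstacle is the choice of $P'$ needed to realise the cofinality of $\dvgen$-adic filtrations. A priori $f(P) \subseteq P'$ need not have subspace $\dvgen$-adic topology equal to its own (for example, $P'/f(P)$ may have $\dvgen$-torsion); but the bornological embedding property of $f$ should force the cofinality after enlarging $P'$ suitably within $N$. An alternative route that bypasses this technicality is to tensor the whole situation with $\dvf$ and apply the dissection equivalence of Proposition~\ref{pro:diss_injlim_CBor}, reducing the claim to the statement that for an isometric embedding $A \hookrightarrow B$ of Banach $\dvf$-vector spaces and any Banach $\dvf$-vector space $C$, the induced map $A \mathbin{\comb{\otimes}} C \to B \mathbin{\comb{\otimes}} C$ is again an isometric embedding, which is a standard fact in non-archimedean functional analysis.
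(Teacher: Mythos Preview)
Your overall colimit reduction is the same as the paper's: write~\(D\) as an inductive limit of its bounded \(\dvgen\)\nb-adically complete submodules, use that \(\hot\) commutes with such colimits, and check the embedding on each piece. The difference is in how that last check is carried out, and this is where your argument is incomplete.

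Your main route acknowledges but does not resolve the cofinality obstacle: given a bounded complete \(P\subseteq M\), there is no reason the \(\dvgen\)\nb-adic topology on \(f(P)\) should agree with the subspace topology from any particular bounded \(P'\subseteq N\), and enlarging~\(P'\) does not help in general. Your alternative route via tensoring with~\(\dvf\) is in the right spirit but, as written, skips two nontrivial points: first, you must justify that a bounded \(\dvr\)\nb-linear map between torsionfree complete bornological \(\dvr\)\nb-modules is a bornological embedding as soon as its \(\dvf\)\nb-linearisation is (this uses bornological torsionfreeness in an essential way); second, the dissection of a bornological embedding does not produce \emph{isometric} embeddings of Banach spaces levelwise, so the ``standard fact'' you invoke is not quite the right one.

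The paper bypasses all of this with one structural input you do not use: by \cite{Cortinas-Meyer-Mukherjee:NAHA}*{Corollary~2.4.3}, every bounded \(\dvgen\)\nb-adically complete submodule of~\(D\) is isomorphic to \(\Cont_0(X,\dvr)\) for some set~\(X\). Then \cite{Cortinas-Meyer-Mukherjee:NAHA}*{Proposition~2.4.5} gives \(\Cont_0(X,\dvr)\haotimes M\cong \Cont_0(X,M)\), and the induced map \(\Cont_0(X,M)\to\Cont_0(X,N)\) is visibly a bornological embedding because~\(f\) is. No filtration comparison and no Banach-space detour are needed.
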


\begin{proof}
  Since~\(D\) is complete and bornologically torsionfree, we may
  write it as an inductive limit
  \(D \cong \varinjlim_i \Cont_0(X_i,\dvr)\) of unit balls of Banach
  \(\dvf\)\nb-vector spaces, with bounded, injective structure maps.
  This is \cite{Cortinas-Meyer-Mukherjee:NAHA}*{Corollary~2.4.3}.
  For each~\(i\), the embedding~\(f\) induces a bornological
  embedding from
  \(\Cont_0(X_i,\dvr) \haotimes M \cong \Cont_0(X_i,M)\) to
  \(\Cont_0(X_i, \dvr) \haotimes N \cong \Cont_0(X_i,N)\) by
  \cite{Cortinas-Meyer-Mukherjee:NAHA}*{Proposition~2.4.5}.  The
  embedding property is preserved by taking inductive limits.  Since
  the structure maps of the inductive systems \((\Cont_0(X_i,M))_i\)
  and \((\Cont_0(X_i,N))_i\) are injective, the inductive limit is
  already separated, so that no separated quotient occurs.  Since
  the completed projective tensor product commutes with filtered
  colimits, we obtain the desired bornological embedding from
  \(D \hot M \cong \varinjlim_i \Cont_0(X_i, M)\) into
  \(\varinjlim_i \Cont_0(X_i,N) \cong D \hot N\).
\end{proof}


\section{The definition of analytic cyclic homology}
\label{sec:definition_HA}

In this section, we define analytic cyclic homology for
\(\resf\)\nb-algebras.  The definition is similar to that
in~\cite{Cortinas-Meyer-Mukherjee:NAHA}.  Namely, we lift an
\(\resf\)\nb-algebra~\(A\) to a suitable tensor algebra
over~\(\dvr\).  Then we enlarge this to a tube algebra, put on a
linear growth bornology, tensor with~\(\dvf\), take the
\(X\)\nb-complex and complete in a certain way.  A crucial
difference is that we use the completion for inductive systems of
seminormed \(\dvf\)\nb-vector spaces and not for bornological
\(\dvf\)\nb-vector spaces.  We shift this completion to the very end of
the construction to stay as long as possible with bornologies, which
we find handier than inductive systems.  The tube algebra
construction adds another layer of complexity because it produces
projective systems of algebras.  Thus the end result of our
construction is a \(\Z/2\)\nb-graded chain complex \(\HAC(A)\) over
the additive category of projective systems of inductive systems of
Banach \(\dvf\)\nb-vector spaces.  This additive category is
quasi-Abelian.  We view \(\HAC(A)\) as an object in the resulting
derived category in order to define a bivariant analytic cyclic
homology theory (following the definition by Cuntz and
Quillen~\cite{Cuntz-Quillen:Cyclic_nonsingularity}).  To define
analytic cyclic homology, we form a homotopy projective limit to
remove the projective system layer, then take an inductive limit and
take homology in the usual sense to get a \(\Z/2\)\nb-graded
\(\dvf\)\nb-vector space \(\HA_*(A)\).

We now define our analytic cyclic homology theory through a sequence
of steps as in \cite{Cortinas-Meyer-Mukherjee:NAHA}*{Section~3}.
Let~\(A\) be an \(\resf\)\nb-algebra.  We often view it as a
\(\dvr\)\nb-algebra with \(\dvgen \cdot A = 0\).

\begin{step}[Torsion-free lifting]
  \label{step:torsionfree_lifting}
  Let \(\dvr[A]\) be the free \(\dvr\)\nb-module generated by the
  underlying set of~\(A\).  Equip~\(\dvr[A]\) with the fine
  bornology.  By definition, an element of~\(\dvr[A]\) is a function
  \(h\colon A\to\dvr\) with finite support.  A subset of~\(\dvr[A]\)
  is bounded if and only if it is contained in \(\dvr[X]\) for a
  finite subset \(X\subseteq A\).
  Let~\(M\) be any bornological \(\dvr\)\nb-module.  Any
  \(\dvr\)\nb-module map \(\dvr[A] \to M\) is bounded
  because~\(\dvr[A]\) carries the fine bornology.  Such maps are in
  natural bijection with maps \(A\to M\): the unique bounded
  \(\dvr\)\nb-linear map \(f^\#\colon \dvr[A] \to M\) induced by
  \(f\colon A\to M\) is defined by
  \(f^\#(h) \defeq \sum_{a\in A} h(a) f(a)\).  We let
  \(\varrho\defeq \id_A^\#\colon \dvr[A] \to A\) be the
  \(\dvr\)\nb-linear map
  induced by the identity map on~\(A\).  This map is
  surjective.  Since \(\dvr[A]\) is free as a \(\dvr\)\nb-module and
  carries the fine bornology, it is complete and (bornologically)
  torsionfree.
\end{step}

\begin{step}[Tensor algebras]
  \label{step:tensor_algebra}
  Abbreviate \(W\defeq \dvr[A]\).  Now we replace~\(W\) by a
  bornological \(\dvr\)\nb-algebra, namely, the tensor
  algebra~\(\tens W\).  We recall its definition in the relevant
  category of bornological \(\dvr\)\nb-modules.

  \begin{definition}[\cite{Cortinas-Meyer-Mukherjee:NAHA}*{Lemma~2.6.1}]
    \label{def:tensor_algebra}
    Let~\(W\) be a bornological \(\dvr\)\nb-module.  Its (incomplete,
    non-unital) \emph{tensor algebra}~\(\tens W\) is the direct sum
    \(\bigoplus_{n=1}^\infty W^{\otimes n}\) with its canonical
    bornology and with the multiplication defined by
    \[
      (x_1 \otimes \dotsb \otimes x_n)\cdot (x_{n+1} \otimes \dotsb
      \otimes x_{n+m}) \defeq x_1 \otimes \dotsb \otimes x_{n+m}.
    \]
    Let \(\sigma_W\colon W\to \tens W\) be the inclusion of the first
    summand.  It is a bounded \(\dvr\)\nb-module homomorphism.
  \end{definition}

  A subset \(S\subseteq \tens W\) is bounded if and only if there
  are \(a\in\N\) and a bounded \(\dvr\)\nb-submodule
  \(B\subseteq W\) such that~\(S\) is contained in the image of
  \(\sum_{n=1}^a B^{\otimes n}\) in~\(\tens W\).  If~\(W\) carries
  the fine bornology, then so does~\(\tens W\).  The tensor
  algebra~\(\tens W\) is torsionfree because~\(W\) is torsionfree
  and this is inherited by tensor products and direct sums (see
  \cite{Meyer-Mukherjee:Bornological_tf}*{Proposition~4.12}).  It
  has the following universal property:

  \begin{lemma}
    \label{lem:tensor_algebra}
    Let~\(S\) be a bornological \(\dvr\)\nb-algebra.  For any bounded
    \(\dvr\)\nb-module homomorphism \(f\colon W\to S\), there is a
    unique bounded \(\dvr\)\nb-algebra homomorphism
    \(f^\#\colon \tens W \to S\) with \(f^\# \circ \sigma_W = f\).
  \end{lemma}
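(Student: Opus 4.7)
The plan is to mimic the classical universal property of the tensor algebra, keeping careful track of boundedness in each step.

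First I would define the candidate map $f^\#$ on each homogeneous summand $W^{\otimes n}$ by sending $x_1\otimes\dotsb\otimes x_n$ to the product $f(x_1)\cdots f(x_n)$ in $S$. To see this extends to a well-defined bounded $\dvr$-linear map on $W^{\otimes n}$, I would invoke the universal property of the completed/bornological tensor product: the map $W^n \to S$, $(x_1,\ldots,x_n)\mapsto f(x_1)\cdots f(x_n)$, is $\dvr$-multilinear, and it is bounded because $f$ is bounded and the multiplication on the bornological algebra $S$ takes bounded sets to bounded sets. Summing over $n\ge 1$ gives a bounded $\dvr$-linear map $f^\#\colon \tens W\to S$, using the universal property of the direct sum bornology.

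Next I would check that $f^\#$ is an algebra homomorphism. On elementary tensors this is immediate from the definition of multiplication in $\tens W$ as concatenation: both $f^\#((x_1\otimes\dotsb\otimes x_n)\cdot(y_1\otimes\dotsb\otimes y_m))$ and $f^\#(x_1\otimes\dotsb\otimes x_n)\cdot f^\#(y_1\otimes\dotsb\otimes y_m)$ equal $f(x_1)\cdots f(x_n)f(y_1)\cdots f(y_m)$. By $\dvr$-bilinearity this extends to all of $\tens W$. The relation $f^\#\circ \sigma_W=f$ holds by construction, since $\sigma_W$ is the inclusion of $W^{\otimes 1}=W$.

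For boundedness, I would use the explicit description of the bornology on $\tens W$ recalled just before the lemma: any bounded set $S\subseteq \tens W$ is contained in the image of $\sum_{n=1}^a B^{\otimes n}$ for some bounded $B\subseteq W$ and some $a\in\N$. The image of $B^{\otimes n}$ under $f^\#$ is contained in $f(B)^n\defeq f(B)\cdot f(B)\cdots f(B)\subseteq S$, which is bounded because $f(B)$ is bounded in $S$ and multiplication in $S$ is bounded. A finite sum of bounded sets is bounded, so $f^\#$ is bounded.

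Finally, for uniqueness, any bounded $\dvr$-algebra homomorphism $g\colon\tens W\to S$ with $g\circ\sigma_W=f$ must agree with $f^\#$ on $W^{\otimes 1}$, and then on $W^{\otimes n}$ for $n\ge 2$ by multiplicativity, since every elementary tensor factors as a product of elements of $\sigma_W(W)$. Hence $g=f^\#$ on elementary tensors, and by $\dvr$-linearity everywhere. I do not anticipate a real obstacle here; the only mildly delicate point is checking that the multiplication $W^n\to S$ descends through the bornological tensor product $W^{\otimes n}$, which is a standard consequence of the universal property of $\otimes$ in $\mathsf{Bor}_\dvr$.
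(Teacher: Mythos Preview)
Your proposal is correct and follows the same approach as the paper, which simply writes down the formula \(f^\#(x_1\otimes\dotsb\otimes x_n)=f(x_1)\dotsm f(x_n)\) and asserts it works; you have merely filled in the details the paper omits. One small point: you use the letter \(S\) both for the target algebra and for a bounded subset of \(\tens W\), which you should fix, and the tensor product here is the uncompleted bornological one, so ``completed/bornological'' is slightly misleading.
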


  \begin{proof}
    The only possible formula for the homomorphism~\(f^\#\) is
    \[
      f^\#(x_1 \otimes \dotsb \otimes x_n)
      \defeq f(x_1) \dotsm f(x_n)
    \]
    for all \(n\in\N\), \(x_1,\dotsc,x_n \in W\).  And indeed, this
    defines a bounded \(\dvr\)\nb-algebra homomorphism
    \(f^\#\colon \tens W \to S\) with \(f^\# \circ \sigma_W = f\).
  \end{proof}

  For \(W=\dvr[A]\), the universal properties of tensor algebras
  and free modules together show that (bounded) \(\dvr\)\nb-algebra
  homomorphisms \(\tens \dvr[A] \to S\) for a bornological
  \(\dvr\)\nb-algebra~\(S\) are naturally in bijection with maps
  \(A\to S\).  Thus \(\tens \dvr[A]\) is the free algebra on the
  set~\(A\), equipped with the fine bornology.
  Since~\(A\) with the fine bornology is a bornological
  \(\dvr\)\nb-algebra, \(\varrho = \id_A^\#\colon \dvr[A] \to A\)
  induces a bounded
  \(\dvr\)\nb-algebra homomorphism \(\varrho^\#\colon \tens W\onto A\)
  by Lemma~\ref{lem:tensor_algebra}.  Let \(I\defeq \ker \varrho^\#\).
  There is a bornological \(\dvr\)\nb-algebra resolution
  \(I \into \tens W \onto A\).
\end{step}

\begin{step}[Tube algebras]
  \label{step:tube_algebra}
  So far, we have built a torsionfree bornological
  \(\dvr\)\nb-algebra \(R\defeq \tens \dvr[A]\) with an ideal~\(I\)
  and a bornological isomorphism \(R/I \cong A\).  Now we
  enlarge~\(R\) to the tube algebras over powers of the ideal~\(I\).
  We recall their definition:

  \begin{definition}[\cite{Cortinas-Meyer-Mukherjee:NAHA}*{Definition~3.1}]
    \label{def:tube_algebra}
    Let~\(R\) be a torsionfree bornological \(\dvr\)\nb-algebra
    and~\(I\) an ideal in~\(R\).  Let~\(I^j\) for \(j\in\N^*\) denote
    the \(\dvr\)\nb-linear span of products \(x_1\dotsm x_j\) with
    \(x_1,\dotsc,x_j\in I\).  The \emph{tube algebra of
      \(I^l\idealin R\)} for \(l\in\N^*\) is defined as
    \[
      \tub{R}{I^l}\defeq \sum_{j=0}^{\infty} \dvgen^{-j}I^{l\cdot j}
      \subseteq R \otimes \dvf,
    \]
    equipped with the subspace bornology; this is indeed a
    \(\dvr\)\nb-subalgebra of \(R \otimes \dvf\).  If \(l \ge j\),
    then \(\tub{R}{I^l} \subseteq \tub{R}{I^j}\), and the inclusion
    is a bounded algebra homomorphism.  Let \(\tub{R}{I^\infty}\)
    denote the resulting projective system of bornological
    \(\dvr\)\nb-algebras \((\tub{R}{I^l})_{l\in\N^*}\).
  \end{definition}

  Since \(\tub{R}{I^l}\) is a bornological submodule of an
  \(\dvf\)\nb-vector space, it is torsionfree (see
  \cite{Meyer-Mukherjee:Bornological_tf}*{Proposition~4.3}).  The
  inclusion \(R \hookrightarrow \tub{R}{I^l}\) induces bornological
  isomorphisms
  \[
    \tub{R}{I^l} \otimes \dvf \cong R \otimes \dvf
    \qquad \text{for all }l\in\N.
  \]
  In fact, for \(R=\tens \dvr[A]\), the bornology on
  \(\tub{R}{I^l}\) is the fine one, just as on \(\dvr[A]\) and~\(R\).
  We allow more general bornologies in the previous steps because
  this will happen in later generalisations.
\end{step}

\begin{step}[Linear growth bornology]
  \label{step:linear_growth}
  Equip each tube algebra \(\tub{R}{I^l}\) for \(l \in \N^*\) with
  the linear growth bornology
  (see~\cite{Cortinas-Cuntz-Meyer-Tamme:Nonarchimedean}); we denote
  this new bornological \(\dvr\)\nb-algebra by
  \(\ling{\tub{R}{I^l}}\).  It is a semidagger algebra by
  construction and remains torsionfree (see
  \cite{Meyer-Mukherjee:Bornological_tf}*{Definition 3.3 and
    Proposition~4.11}).  Let \(\ling{\tub{R}{I^\infty}}\) be the
  projective system formed by the torsionfree, semidagger
  bornological \(\dvr\)\nb-algebras \(\ling{\tub{R}{I^l}}\) for
  \(l\in\N^*\).
\end{step}

\begin{step}[Tensor with the field of fractions]
  \label{step:tensor_dvf}
  Tensor with~\(\dvf\).  This gives a projective system of
  bornological \(\dvf\)\nb-algebras
  \(\ling{\tub{R}{I^\infty}} \otimes \dvf
    \defeq (\ling{\tub{R}{I^l}} \otimes
    \dvf)_{l\in\N^*}\).
\end{step}

\begin{step}[$X$-Complex]
  \label{step:X}
  Form the \emph{\(X\)\nb-complex}, which gives us a
  projective system of \(\Z/2\)\nb-graded chain complexes of
  bornological \(\dvf\)\nb-vector spaces
  \[
    X\bigl(\ling{\tub{R}{I^\infty}} \otimes \dvf\bigr)
    \defeq X\bigl(\ling{\tub{R}{I^l}} \otimes
    \dvf\bigr)_{l\in\N^*}.
  \]
  We recall how the \(X\)\nb-complex is defined for incomplete
  bornological algebras:

  \begin{definition}
    \label{def:incomplete_X}
    Let~\(S\) be a bornological \(\dvr\)\nb-algebra.  Let
    \(\Omega^1(S) \defeq S^+ \otimes S\) be the bornological
    \(S\)\nb-bimodule of
    noncommutative \(1\)\nb-forms.  Let
    \(\Omega^1(S)/[\cdot, \cdot]\) be the quotient of
    \(\Omega^1(S)\) by the image of the commutator map
    \[
      S \otimes \Omega^1(S) \to \Omega^1(S),\qquad
      x \otimes \omega \mapsto x \cdot \omega - \omega \cdot x.
    \]
    The \emph{\(X\)\nb-complex} of~\(S\) is defined as
    \[
      X(S)\defeq
      \Bigl(
      \begin{tikzcd}
        S \arrow[r, shift left, "q\circ \diff"]&
        \Omega^1(S)/[\cdot,\cdot] \arrow[l, shift left, "\tilde{b}"]
      \end{tikzcd}
      \Bigr).
    \]
    The maps \(q\), \(\diff\) and~\(\tilde{b}\) are the same maps as
    in~\cite{Cuntz-Quillen:Cyclic_nonsingularity} or
    \cite{Cortinas-Meyer-Mukherjee:NAHA}*{Section~2.7}.

    The order
    of Steps \ref{step:tensor_dvf} and~\ref{step:X} may be reversed,
    that is, \(X(S \otimes \dvf) \cong X(S) \otimes \dvf\).
  \end{definition}
\end{step}

\begin{step}[Quasi-completion] Pass to inductive systems and then
  complete there.
  \label{step:completion}

  \begin{definition}
    \label{def:HA_complex}
    The \emph{analytic cyclic homology complex} of~\(A\) is
    \[
      \HAC(A) \defeq
      \comb{\diss (X(\ling{\tub{R}{I^\infty}})\otimes \dvf)}
    \]
    with \(R\defeq \tens \dvr[A]\) and
    \(I\defeq \ker(\id_A^\#\colon \tens \dvr[A] \onto A)\); this is a
    \(\Z/2\)\nb-graded chain complex of projective systems of
    inductive systems of Banach \(\dvf\)\nb-vector spaces; we
    briefly call this a \emph{pro-ind complex}.
  \end{definition}
\end{step}

The following steps extract a homology \(\HA_*(A)\) from the pro-ind
complex \(\HAC(A)\).

\begin{step}[Homotopy limit]
  \label{step:htpy_lim}
  Let \(X=(X_n)_{n\in\N}\) with \(X_n\) in
  \(\mathsf{Ind}(\mathsf{Ban}_\dvf)\) for \(n\in\N\) be a pro-ind
  complex.  Then we let \(\qholim (X)\) denote the pro-ind complex
  \[
    \qholim(X)
    = \cone\bigl(\fake X_n \xrightarrow{\id - \sigma}
    \fake X_n\bigr),
  \]
  where \(\cone\) denotes the mapping cone, \(\sigma\)~is the shift
  map that
  restricts to the structure map \(X_{n+1} \to X_n\) on the factors,
  and the product is a formal one in the category of projective
  systems \(\overleftarrow{\mathsf{Ind}(\mathsf{Ban}_\dvf)}\).
\end{step}

The composite of the canonical inclusion \(X \to \fake X_n\) with
the canonical map \(\fake X_n \to \qholim X\) is a quasi-isomorphism
between \(X\) and \(\qholim X\) (see ~\cite{Neeman:Triangulated}*{Lemma 1.6.6}, for example).  In particular,
\(\qholim \HAC(A)\) is quasi-isomorphic to \(\HAC(A)\).  As a
consequence, two isomorphic pro-ind complexes have quasi-isomorphic
homotopy limits.  In fact, their homotopy limits are even chain
homotopy equivalent (see~\cite{Neeman:Triangulated}*{1.7.1} for a closely related statement).  They may, however, fail to be
isomorphic as chain complexes.  The pro-ind complex
\(\qholim \HAC(A)\) depends functorially on the algebra~\(A\)
because an algebra homomorphism induces a morphism of diagrams, not
merely of projective systems, and the homotopy limit construction is
clearly functorial for diagram morphisms.

\begin{step}[Homology]
  \label{step:homology}
  The homology of a pro-ind complex~\(X\) is defined most quickly as
  the \(\dvf\)\nb-vector space of chain homotopy classes of chain
  maps \(\dvf \to X\), where we treat~\(\dvf\) as a constant pro-ind
  complex.  More explicitly, this means the following.  We first
  take levelwise inductive limits and forget the bornology, getting
  a countable projective system of chain complexes of
  \(\dvf\)\nb-vector spaces.  Secondly, we take the projective
  limit, getting a chain complex of \(\dvf\)\nb-vector spaces.
  Third, we take its homology.
\end{step}

\begin{definition}
  The \emph{analytic cyclic homology} \(\HA_*(A)\) of an
  \(\resf\)\nb-algebra~\(A\) is the homology as explained above of
  the chain complex of \(\dvf\)\nb-vector spaces
  \(\qholim \HAC(A)\).  In other words, it is the homology of the
  chain complex of \(\dvf\)\nb-vector spaces
  \(\varprojlim \varinjlim \qholim \HAC(A)\).
\end{definition}

Thus \(\HA_*(A)\) for \(* = 0,1\) are \(\dvf\)\nb-vector spaces
without further structure.  (We prefer to ignore the natural
bornology on \(\HA_*(A)\) because we have no good use for it.  If we
cared about this bornology, we should have taken a homotopy
inductive limit instead of an ordinary inductive limit before taking
homology because inductive limits need not be exact on the level of
bornologies.)

We may slightly simplify the construction above.  Taking the
projective limit after the homotopy inverse limit above has the
effect of replacing the formal product \(\fake\) by an ordinary
product in the category of \(\dvf\)\nb-vector spaces.  This gives
the homotopy projective limit
\(\holim = \varprojlim \circ \qholim\).  It is important to apply
the \(\holim\) construction after the levelwise inductive limit.  We
thank the referee for pointing out that we did not treat these limit
procedures correctly in a previous version of the article.



\subsection{Bivariant analytic cyclic homology}

Now we define bivariant analytic cyclic homology, by taking maps
\(\HAC(A)\to \HAC(B)\) in the derived category of
\(\overleftarrow{\mathsf{Ind}(\mathsf{Ban}_\dvf)}\), the category of
countable projective systems of inductive systems of Banach
\(\dvf\)\nb-vector spaces.  We show that this category is
quasi-Abelian, so that it has a derived category.

We recall some basic concepts from
Schneiders~\cite{Schneiders:Quasi-Abelian}.  Let~\(\mathcal{C}\) be
an additive category with kernels and cokernels.
A diagram \(K \overset{i}\to E \overset{p}\to Q\) is an
\emph{extension} if~\(i\) is a kernel of~\(p\) and~\(p\) is a
cokernel of~\(i\).  An additive category with kernels and cokernels
is \emph{quasi-Abelian} if the pullback of a cokernel along
any map exists and is again a cokernel and the pushout of a kernel
along any map exists and is again a kernel.
Our example of interest is
\(\overleftarrow{\mathsf{Ind}(\mathsf{Ban}_\dvf)}\).

Throughout this article, we restrict ourselves to
\(\Z/2\)-graded chain complexes, also known as
\emph{supercomplexes}. More concretely, these are chain complexes with \(2\)-periodic entries, and morphisms between such complexes is given by the \(\Z/2\)-graded mapping complex defined in \cite{Meyer:HLHA}*{A.4, A.10}. A chain complex \((C,\delta)\)
in a quasi-Abelian category~\(\mathfrak{B}\) is \emph{exact} if
\(\ker(\delta)\) exists and \(\delta\colon C \to \ker(\delta)\) is a
cokernel.  A chain map \(f \colon C \to D\) is a
\emph{quasi-isomorphism} if its mapping cone is exact.  The
\emph{derived category} \(\mathrm{Der}(\mathfrak{B})\)
of~\(\mathfrak{B}\) is the localisation of the triangulated homotopy category of
(\(\Z/2\)-graded)
chain complexes at the quasi-isomorphisms.  Equivalently, it is the
quotient of the homotopy category by the thick subcategory of exact
chain complexes. 



\begin{lemma}
  The categories \(\mathsf{Ban}_\dvf\),
  \(\mathsf{Ind}(\mathsf{Ban}_\dvf)\) and
  \(\overleftarrow{\mathsf{Ind}(\mathsf{Ban}_\dvf)}\) are
  quasi-Abelian categories.
\end{lemma}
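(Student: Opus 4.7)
The plan is to treat the three claims in increasing generality, reducing the second and third to the first plus standard preservation principles for quasi-abelian categories. The core verification is for $\mathsf{Ban}_\dvf$; the passage to ind-systems and to countable projective systems follows from general categorical machinery that is by now folklore after Schneiders' memoir.

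First I would verify that $\mathsf{Ban}_\dvf$ has kernels and cokernels. For a bounded $\dvf$-linear map $f\colon X\to Y$, the kernel is $\ker(f)\subseteq X$ with the restricted norm (a closed subspace of a Banach space, hence Banach), and the cokernel is $Y/\overline{f(X)}$ with the quotient norm. To check the quasi-abelian axioms, the key input is the non-archimedean open mapping theorem: any surjective bounded $\dvf$-linear map between Banach $\dvf$-vector spaces is open. This identifies strict epimorphisms with open surjections and dually strict monomorphisms with closed isometric embeddings. Given a strict epimorphism $p\colon E\onto Q$ and a morphism $f\colon Q'\to Q$, the pullback is realised as the closed subspace $\{(e,q')\in E\oplus Q' : p(e)=f(q')\}$ of the Banach space $E\oplus Q'$, and its projection to $Q'$ is surjective (because $p$ is) and open by a second application of the open mapping theorem, hence a cokernel. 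The dual statement for pushouts of strict monomorphisms along arbitrary morphisms follows in the same way, using the cokernel description $Y/\overline{f(X)}$.

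Next I would promote the result to $\mathsf{Ind}(\mathsf{Ban}_\dvf)$, invoking the theorem of Schneiders that $\mathsf{Ind}(\mathcal{C})$ is quasi-abelian whenever $\mathcal{C}$ is; see \cite{Schneiders:Quasi-Abelian}. Concretely, any morphism of ind-systems can be represented by a levelwise morphism over a common cofiltered reindexing, kernels and cokernels are then computed levelwise, and the pullback/pushout axioms reduce, after cofinal reindexing, to the already-established statements in $\mathsf{Ban}_\dvf$. Finally, for $\overleftarrow{\mathsf{Ind}(\mathsf{Ban}_\dvf)}$ I would use the general fact that if $\mathcal{C}$ is quasi-abelian and $I$ is a small category, then the functor category $\mathcal{C}^I$ is quasi-abelian with kernels, cokernels, pullbacks and pushouts computed pointwise; applying this with $I=\N^{\op}$ and $\mathcal{C}=\mathsf{Ind}(\mathsf{Ban}_\dvf)$ gives the claim for countable projective systems.

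The main obstacle is the ind-category step: the bookkeeping with cofinal reindexings is fiddly, and one must check that the pullback of a strict epimorphism is again a strict epimorphism at the level of ind-systems, not merely levelwise. The other two steps are either a direct application of the open mapping theorem or a formal consequence of the pointwise description of limits and colimits in a functor category into a quasi-abelian target.
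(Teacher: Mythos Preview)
Your third step contains a genuine gap. The category \(\overleftarrow{\mathsf{Ind}(\mathsf{Ban}_\dvf)}\) in this paper is the \emph{pro-category} of countable projective systems, not the functor category \(\mathcal{C}^{\N^{\op}}\). In the pro-category the morphisms are
\[
  \Hom\bigl((X_n),(Y_m)\bigr) = \varprojlim_m \varinjlim_n \Hom(X_n,Y_m),
\]
so a morphism need not be representable by a levelwise natural transformation without shifts in the index. This is visible throughout the paper: the homotopy limit in Step~\ref{step:htpy_lim} uses formal products, which only make sense in the pro-category, and the quantifier pattern ``for every~\(n\) there is \(k\ge n\) such that \dots'' in Definition~\ref{def:locally_contractible} and Lemma~\ref{lem:explicit_local_homotopy_equivalences} is the pro-categorical one. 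Your argument that functor categories \(\mathcal{C}^I\) into a quasi-Abelian target are quasi-Abelian with pointwise (co)kernels is correct, but it proves the wrong statement. For pro-categories one needs a separate argument: after reindexing one can represent a morphism by a strict diagram morphism and compute (co)kernels levelwise, but one must then check that the resulting levelwise kernel or cokernel has the correct universal property with respect to \emph{all} pro-morphisms, not just strict ones, and that the pullback/pushout axioms survive. This is what Prosmans does, and it is why the paper invokes that reference rather than a functor-category argument.

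A minor point on step~1: in \(\mathsf{Ban}_\dvf\) with bounded linear maps as morphisms, strict monomorphisms are closed topological embeddings, not necessarily isometric ones; isomorphisms in this category are bi-bounded bijections, not isometries. This does not affect the substance of your argument.

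For comparison, the paper's own proof simply cites the literature: \(\mathsf{Ban}_\dvf\) and \(\mathsf{Ind}(\mathsf{Ban}_\dvf)\) are handled by Bambozzi--Ben-Bassat, and the pro-category by Prosmans. Your first two steps are a reasonable unpacking of those citations; only the third needs to be replaced by the correct pro-category argument.
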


\begin{proof}
  The categories \(\mathsf{Ban}_\dvf\) and
  \(\mathsf{Ind}(\mathsf{Ban}_\dvf)\) are quasi-Abelian by
  \cite{Bambozzi-Ben-Bassat:Dagger}*{Proposition~3.18} and
  \cite{Bambozzi-Ben-Bassat:Dagger}*{Proposition 3.29},
  respectively.  By the same proof as in
  \cite{Prosmans:Derived_limits}*{Proposition~7.1.5}, 
  \(\overleftarrow{\mathsf{Ind}(\mathsf{Ban}_\dvf)}\) is
  quasi-Abelian. \qedhere  
\end{proof}

\begin{definition}
  Let \(A\) and~\(B\) be \(\resf\)\nb-algebras.  Then \(\HAC(A)\)
  and \(\HAC(B)\) are \(\Z/2\)\nb-graded chain complexes over the
  \(\dvf\)\nb-linear quasi-Abelian category
  \(\overleftarrow{\mathsf{Ind}(\mathsf{Ban}_\dvf)}\).  Let
  \(\Hom_{\mathsf{Der}(\overleftarrow{\mathsf{Ind}(\mathsf{Ban}_\dvf)})}\)
  denote morphisms in the derived category of \(\Z/2\)\nb-graded
  chain complexes.  Let~\([i]\) denote a degree shift by~\(i\).  The
  \emph{bivariant analytic cyclic homology} \(\HA_*(A,B)\) is the
  \(\Z/2\)\nb-graded \(\dvf\)\nb-vector space with
  \[
    \HA_i(A,B) \defeq
    \Hom_{\mathsf{Der}(\overleftarrow{\mathsf{Ind}(\mathsf{Ban}_\dvf)})}
    (\HAC(A), \HAC(B)[i])
    \qquad \text{for }i=0,1.
  \]
\end{definition}

\begin{remark}
To clarify the mapping space of the above derived category, the authors in (\cite{mukherjee2022quillen}) describe a Quillen model structure on the category of \(\Z/2\)-graded chain complexes over \(\overleftarrow{\mathsf{Ind}(\mathsf{Ban}_\dvf)}\), whose homotopy category is the derived category of the quasi-abelian category \(\overleftarrow{\mathsf{Ind}(\mathsf{Ban}_\dvf)}\). This is based on previous work by Corti\~nas and Valqui (\cite{Cortinas-Valqui:Excision}), who define a homotopy category of pro-\(\Z/2\)-graded chain complexes to describe the mapping space for bivariant periodic cyclic homology.     
\end{remark}

The following theorem uses some results that will be proven only
later.  We put it here to point out right away how
\(\HA_*(A,B)\) generalises \(\HA_*(B)\).

\begin{theorem}
  \label{theorem:bivariant_specialises}
  If \(A = \resf\), then \(\HA_*(\resf, B) \cong \HA_*(B)\).
\end{theorem}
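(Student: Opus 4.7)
The plan is to show that $\HAC(\resf)$ is quasi-isomorphic in the pro-ind derived category to the constant complex $\dvf$ placed in degree zero, and then to identify derived morphisms out of $\dvf$ with the homology that defines $\HA_*(B)$.

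First I would invoke the comparison theorem announced in Section~\ref{sec:compare_HA_theories}, which gives $\HAC(A)\simeq \HAC(D)$ whenever $D$ is a pro-dagger algebra lifting of $A$ that is fine mod~$\dvgen$ with analytically nilpotent kernel. For $A=\resf$ the natural choice is $D=\dvr$ itself with the fine bornology, regarded as a constant projective system of dagger algebras; it is fine mod~$\dvgen$ since $\dvr/\dvgen\dvr=\resf$ carries the fine bornology, and the kernel $\dvgen\dvr$ is analytically nilpotent. Then I would compute $\HAC(\dvr)$ directly from its definition for pro-dagger algebras: since the $X$-complex commutes with tensoring by $\dvf$, we have $X(\dvr)\otimes\dvf \cong X(\dvf)$, and because $\dvf$ is a commutative $\dvf$-algebra with vanishing module of Kähler differentials over itself, $X(\dvf)$ collapses to $\dvf$ concentrated in degree zero. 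Quasi-completion leaves this constant ind-Banach complex unchanged, so both $\HAC(\dvr)$ and hence $\HAC(\resf)$ are quasi-isomorphic to the constant pro-ind complex $\dvf$ in degree zero.

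Next, using this quasi-isomorphism, the definition of bivariant analytic cyclic homology yields
\[
  \HA_i(\resf,B) \;=\; \Hom_{\mathsf{Der}(\overleftarrow{\mathsf{Ind}(\mathsf{Ban}_\dvf)})}(\dvf,\HAC(B)[i]).
\]
To match this with $\HA_i(B)$, I would prove that for any pro-ind chain complex~$C$ one has a natural isomorphism $\Hom_{\mathsf{Der}}(\dvf,C[i]) \cong H_i\bigl(\varprojlim\varinjlim \qholim C\bigr)$. The construction $\qholim$ from Step~\ref{step:htpy_lim} acts as a fibrant-type replacement in the projective direction, so that morphisms out of the constant pro-object $\dvf$ reduce to the honest projective limit of componentwise maps; this is precisely why the formal product $\fake$ pairs with $\dvf$ as an ordinary product. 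For an ind-Banach space~$Y$, morphisms $\dvf\to Y$ in the ind-category are identified with $\varinjlim Y_i$ itself. Combining these observations, derived morphisms from $\dvf$ into $\HAC(B)$ compute the homology of the complex of $\dvf$-vector spaces $\varprojlim\varinjlim\qholim \HAC(B)$, which is the definition of $\HA_i(B)$.

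The principal obstacle lies in this last identification: one must verify that the constant pro-ind object $\dvf$ is sufficiently well-behaved (essentially projective) in the derived category of $\overleftarrow{\mathsf{Ind}(\mathsf{Ban}_\dvf)}$ so that $\qholim$ together with the levelwise inductive limit really computes the total derived functor $\mathrm{R}\Hom(\dvf,-)$. Once these categorical points are settled, the theorem follows by transport along the quasi-isomorphism $\HAC(\resf)\simeq\dvf$ together with the functoriality of derived Hom in its first argument.
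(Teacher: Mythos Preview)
Your overall strategy---reduce to showing $\HAC(\resf)\simeq\dvf$ and then compute derived $\Hom$ out of $\dvf$---matches the paper's approach. There are, however, two places where your argument needs repair.

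First, your computation of $\HAC(\dvr)$ rests on a misreading of the definition. The analytic cyclic complex of a pro-dagger algebra~$D$ from \cite{Cortinas-Meyer-Mukherjee:NAHA} is \emph{not} the quasi-completion of $X(D)\otimes\dvf$; it is the $X$-complex of the completed analytic tensor algebra $\comb{\ling{\tub{\tens D}{(\jens D)^\infty}}}$, tensored with~$\dvf$ (see the displayed formula just before Proposition~\ref{pro:HAC_dagger_order}). For $D=\dvr$ this still involves the infinite-dimensional tensor algebra $\tens\dvr=\bigoplus_{n\ge1}\dvr$, the tube construction, and the dagger completion. That $\HAC(\dvr)$ is nonetheless chain homotopy equivalent to~$\dvf$ is a genuine theorem, \cite{Cortinas-Meyer-Mukherjee:NAHA}*{Corollary~4.7.3}, relying on quasi-freeness and Goodwillie-type arguments; the paper simply cites it. Your shortcut through $X(\dvf)$ computes the wrong object.

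Second, on the derived-$\Hom$ identification: you correctly flag this as the principal obstacle, but the paper does not dispose of it by asserting that~$\dvf$ is projective or that $\qholim$ is a fibrant replacement---neither statement is established in this setting. Instead, the paper works directly with the calculus of fractions: a morphism $\dvf\to\HAC(B)[i]$ in the derived category is represented by a zigzag $\dvf\to C\xleftarrow{\simeq}\HAC(B)[i]$, and one shows concretely that every such zigzag may be replaced by the canonical one through $\qholim\HAC(B)[i]$. The key step uses that products and filtered colimits are exact on $\dvf$-vector spaces, so that the induced map $\qholim\HAC(B)[i]\to\qholim C$ becomes an isomorphism on homology after $\varprojlim\varinjlim$, allowing one to lift the arrow $\dvf\to\qholim C$ uniquely up to homotopy. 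The paper then checks both surjectivity and injectivity of the resulting map $\HA_i(B)\to\HA_i(\resf,B)$ by hand. Your sketch points in the right direction but does not supply this argument.
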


\begin{proof}
  Since~\(\dvr\) is a dagger algebra that lifts~\(\resf\), we may
  use it to compute the analytic cyclic homology of~\(\resf\) by
  Corollary~\ref{cor:Monsky-Washnitzer_lift} and
  Theorem~\ref{the:HA_through_lift}.  That is, there is a
  quasi-isomorphism \(\HAC(\resf) \simeq \diss \HAC(\dvr)\) with the
  complex \(\HAC(\dvr)\) defined
  in~\cite{Cortinas-Meyer-Mukherjee:NAHA}.  By
  \cite{Cortinas-Meyer-Mukherjee:NAHA}*{Corollary 4.7.3},
  \(\HAC(\dvr)\) is chain homotopy equivalent to~\(\dvf\), supported
  in degree~\(0\).  Here~\(\dvf\) is viewed as a constant pro-object in \(\mathsf{CBor}_\dvf\).  The dissection functor
  \(\mathsf{CBor}_\dvf \to
  \overleftarrow{\mathsf{Ind}(\mathsf{Ban}_\dvf})\) is additive and
  therefore descends to a functor between the homotopy categories of
  complexes.  So by functoriality,
  \(\diss (\HAC(\dvr)) \simeq \dvf\).  It remains to compute the
  morphism space
  \begin{equation}
    \label{mapping_space}
    \Hom_{\mathsf{Der}(\overleftarrow{\mathsf{Ind}(\mathsf{Ban}_\dvf)})}
    (\dvf, \HAC(B)[i]).
  \end{equation}
  By definition of the derived category of a quasi-abelian category,
  morphisms between \(\dvf\) and \(\HAC(B)[i]\) are given by
  equivalence classes of diagrams of the form
  \[
    \begin{tikzcd}
      \dvf \arrow{dr} & & \HAC(B)[i], \ar[dl, "\simeq"]\\
      & C &
    \end{tikzcd}
  \]
  where \(\HAC(B)[i] \overset{\sim}\to C\) is a quasi-isomorphism to
  an object ~\(C\) of
  \(\mathsf{Der}(\overleftarrow{\mathsf{Ind}(\mathsf{Ban}_\dvf)})\).
  Here the arrows are homotopy classes of chain maps.  The set of
  arrows \(F\to C\) is exactly the homology of the chain complex
  \(\varprojlim \varinjlim C\).  The proof will be finished by
  showing that any zigzag as above may be replaced by one where the
  map \(\HAC(B)[i] \to C\) is the canonical map to
  \(\qholim \HAC(B)[i]\); this fact is the reason why we built the
  homotopy limit before taking homology in the definition of
  \(\HA_*(A)\).

  Given a zigzag as above, we get the solid arrows in the following
  diagram:
  \[
    \begin{tikzcd}
      \dvf \arrow{dr} \ar[drrr, dotted] & &
      \HAC(B)[i] \ar[dl, "\simeq"] \ar[dr, "\simeq"] & &\\
      & C \ar[dr, "\simeq"] & &
      \qholim \HAC(B)[i] \ar[dl, "\simeq"] \\
      & & \qholim (C) & &
    \end{tikzcd}
  \]
  Here we use the weak functoriality of homotopy limits in
  triangulated categories to get the induced quasi-isomorphism
  \(\qholim \HAC(B)[i] \to \qholim C\).  Since both products and
  inductive limits in the category of \(\dvf\)\nb-vector spaces are
  exact functors, it follows that the quasi-isomorphism
  \(\qholim \HAC(B)[i] \to \qholim C\) induces an isomorphism on the
  homology of these pro-ind complexes; the point here is that the
  range of \(\qholim\) consists of chain complexes where the
  projective limit is just a product.  Therefore, the arrow
  \(\dvf \to \qholim C\) factors uniquely (up to chain homotopy)
  through the dashed arrow \(\dvf \to \qholim \HAC(B)[i]\).  Thus
  any arrow \(\dvf \to \HAC(B)[i]\) in the derived category is
  represented by a zigzag
  \[
    \dvf \to \qholim \HAC(B)[i] \overset{\simeq}\leftarrow \HAC(B)[i].
  \]
  These special zigzags are in bijection with
  \(H_i(\holim \varinjlim \HAC(B)) \cong \HA_i(B)\).  This implies
  that there is a well defined surjective map
  \(\HA_i(B) \to \HA_i(\resf,B)\).

  Two zigzags \(\dvf \to C_j \leftarrow \HAC(B)[i]\) for \(j=1,2\)
  as above represent the same arrow in the derived category if and
  only if we may ``enlarge'' them by chain maps \(C_1\to C\) and
  \(C_2 \to C\) to the same chain complex~\(C\), so that the chain
  maps \(\dvf \to C\) and \(\HAC(B)[i] \to C\) become chain
  homotopic.  Then the zigzags through \(C_1\), \(C_2\) and~\(C\)
  all correspond to the same homology class in
  \(H_i( \holim \varinjlim \HAC(B))\).  Thus the surjective map
  \(\HA_i(B) \to \HA_i(\resf,B)\) is also injective.
\end{proof}

\section{A variant of the definition}
\label{sec:homotopy_stability}

Let~\(A\) be an \(\resf\)\nb-algebra and let~\(D\) be a dagger
algebra over~\(\dvr\) with \(D/\dvgen D \cong A\) bornologically,
that is, \(D/\dvgen D\) carries the fine bornology.  This situation
occurs, for instance, in the definition of Monsky--Washnitzer
cohomology for a smooth commutative \(\resf\)\nb-algebra~\(A\).  Let
\(\HAC(D)\) be the analytic cyclic homology complex of the dagger
algebra~\(D\) as defined in~\cite{Cortinas-Meyer-Mukherjee:NAHA}.
One of our main results shows that \(\HAC(A)\) is quasi-isomorphic
to \(\diss \HAC(D)\) with the dissection functor
in~\eqref{eq:dissection_over_F_complete}.  This implies
\(\HA_*(D) \cong \HA_*(A)\).  To prove this, we identify both
\(\diss \HAC(D)\) and \(\HAC(A)\) with an auxiliary invariant that
depends on \(A\) and~\(D\).  In this section, we define this invariant
and formulate its functoriality and homotopy invariance properties.

We do not know whether any \(\resf\)\nb-algebra~\(A\) is isomorphic
to a quotient \(D/\dvgen D\) for a dagger algebra~\(D\).  What we do
know is that there is an epimorphism \(\varrho\colon D \onto A\)
where \(D=(D_n)_{n\in\N}\) is a pro-dagger algebra and
\(\ker \varrho\) is analytically nilpotent; in addition, we may
arrange for \(D_n/\dvgen D_n\) to carry the fine bornology for all
\(n\in\N\).  We prove later that \(\HAC(A) \simeq \diss \HAC(D)\)
remains true in this more general situation.  This allow us to carry
over results about the analytic cyclic homology of pro-dagger
algebras from~\cite{Cortinas-Meyer-Mukherjee:NAHA} to our new
theory.

The proof of this quasi-isomorphism uses a variant of the definition
above that depends, in addition, on a
projective system of torsionfree complete
bornological \(\dvr\)\nb-modules \(W = (W_n)_{n\in\N}\) and a
morphism of projective systems of bornological \(\dvr\)\nb-modules
\(\varrho\colon W \to A\); here~\(A\) carries the fine bornology and
is viewed as a constant projective system.
The morphism~\(\varrho\) is represented by a bounded
\(\dvr\)\nb-linear map \(\varrho_n\colon W_n \to A\) for some
\(n\in\N\).  Reindexing the projective system, we may arrange that
\(n=0\).  In particular, if~\(W\) is a constant projective system,
we are dealing with a torsionfree complete bornological
\(\dvr\)\nb-module~\(W\) and a bounded \(\dvr\)\nb-linear map
\(\varrho\colon W\to A\).

In Step~\ref{step:torsionfree_lifting} of the construction above, we
now replace~\(\dvr[A]\) by~\(W\) and the canonical map
\(\id_A^\#\colon \dvr[A] \to A\) by the given map
\(\varrho\colon W\to A\).  The remaining steps are the same as
above, applied levelwise to projective systems of algebras.  In
Step~\ref{step:tensor_algebra}, we define the tensor algebra of a
projective system levelwise; this has an analogous universal
property.  The kernel~\(I\) of the induced map
\(\varrho^\#\colon \tens W \to A\) is computed levelwise as the
projective system formed by
\(I_n \defeq \ker (\varrho_n^\#\colon \tens W_n \to A)\) for
\(n\in\N\).  In Step~\ref{step:tube_algebra}, we form the levelwise
tube algebra
\(\tub{\tens W}{I^\infty} \defeq (\tub{\tens W_n}{I^l})_{n\in\N,
  l\in\N^*}\) as in
\cite{Cortinas-Meyer-Mukherjee:NAHA}*{Section~4.2}.  In the
following steps, \ref{step:linear_growth}, \ref{step:tensor_dvf},
\ref{step:X} and~\ref{step:completion}, we apply the same
construction as above levelwise.  This results in the
\(\Z/2\)\nb-graded chain complex of projective systems of
inductive systems of Banach \(\dvf\)\nb-vector spaces
\[
  \HAC(A,W,\varrho) \defeq
  \Bigl(\comb{\diss X( \ling{\tub{\tens W_n}{I^l}} \otimes \dvf)}
  \Bigr)_{n\in\N, l\in\N^*}.
\]
The definitions above are consistent in the following sense:
\begin{align*}
  \HAC(A) &\defeq \HAC(A, \dvr[A], \id_A^\#\colon \dvr[A] \onto A),\\
  \HA_*(A) &\defeq \HA_*(A, \dvr[A], \id_A^\#\colon \dvr[A] \onto A).
\end{align*}

The following lemma explains the functoriality of
\(\HA(A,W,\varrho)\):

\begin{lemma}
  \label{lem:hybrid_functorial}
  Let~\(A_j\) for \(j=1,2\) be \(\resf\)\nb-algebras, viewed as
  constant projective systems of \(\dvr\)\nb-algebras, and let
  \(f\colon A_1 \to A_2\) be an \(\resf\)\nb-algebra homomorphism.
  Let \(W_j = (W_{j,n})_{n\in\N}\) for \(j=1,2\) be projective
  systems of torsionfree complete bornological \(\dvr\)\nb-modules
  and let \(\varrho_j\colon W_j \to A_j\) and
  \(g\colon W_1 \onto W_2\) be morphisms of projective systems of
  bornological \(\dvr\)\nb-modules.  Assume that the square
  \begin{equation}
    \label{eq:HA_W_functorial}
    \begin{tikzcd}
      W_1 \arrow[d, "\varrho_1"'] \arrow[r, "g"] &
      W_2 \arrow[d, "\varrho_2"] \\
      A_1 \arrow[r, "f"] &
      A_2
    \end{tikzcd}
  \end{equation}
  commutes.  Let
  \(I_j \defeq \ker (\varrho_j^\#\colon \tens W_j\to A_j)\).  There
  is an induced compatible family of morphisms of projective systems
  of \(\dvf\)\nb-algebras
  \[
    (g,f)_*\colon \ling{\tub{\tens W_1}{I_1^l}} \otimes \dvf\to
    \ling{\tub{\tens W_2}{I_2^l}} \otimes \dvf
  \]
  for all \(l\in\N\), which further induces a chain map
  \[
    (g,f)_*\colon \HAC(A_1,W_1,\varrho_1) \to \HAC(A_2,W_2,\varrho_2),
  \]
  which further induces a grading-preserving \(\dvf\)\nb-linear map
  \[
    (g,f)_*\colon \HA_*(A_1, W_1,\varrho_1)
    \to \HA_*(A_2, W_2,\varrho_2).
  \]
  All these constructions are functorial for the obvious
  composition of commuting squares as in~\eqref{eq:HA_W_functorial}.
\end{lemma}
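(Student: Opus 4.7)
The plan is to produce the induced maps level by level along each step of the construction of $\HAC(A, W, \varrho)$, and to check that the only non-formal compatibility, namely that the ideals get mapped to ideals, follows from the commutativity of the square \eqref{eq:HA_W_functorial}. After reindexing so that $g$ and both~$\varrho_j$ are represented by honest strict morphisms of projective systems, I would work levelwise and then invoke naturality to assemble the resulting maps into morphisms of projective systems.

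The universal property in Lemma \ref{lem:tensor_algebra}, applied to the bounded $\dvr$-module map $\sigma_{W_2} \circ g_n \colon W_{1,n} \to \tens W_{2,n}$, produces a unique bounded $\dvr$-algebra homomorphism $\tens g_n \colon \tens W_{1,n} \to \tens W_{2,n}$ compatible with the inclusions~$\sigma_{W_j}$; naturality in the target shows these assemble into a morphism of projective systems $\tens g \colon \tens W_1 \to \tens W_2$. From the commuting square $f \circ \varrho_1 = \varrho_2 \circ g$ and the uniqueness part of Lemma \ref{lem:tensor_algebra}, I would deduce $f \circ \varrho_1^\# = \varrho_2^\# \circ \tens g$ as bounded $\dvr$-algebra homomorphisms $\tens W_1 \to A_2$. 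Hence $\tens g(I_1) \subseteq I_2$ and consequently $\tens g(I_1^l) \subseteq I_2^l$ for all $l \in \N^*$. This multiplicative compatibility of the ideal filtrations is exactly what is needed in order to extend $\tens g \otimes \dvf$ to a bounded $\dvr$-algebra homomorphism of tube algebras $\tub{\tens W_1}{I_1^l} \to \tub{\tens W_2}{I_2^l}$: each generator $\dvgen^{-j} x$ with $x \in I_1^{l \cdot j}$ is sent to $\dvgen^{-j} \tens g(x) \in \dvgen^{-j} I_2^{l \cdot j}$.

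The remaining operations in Steps \ref{step:linear_growth}--\ref{step:completion}, namely the linear growth bornology, the functor $\blank \otimes \dvf$, the $X$-complex of Definition \ref{def:incomplete_X}, the dissection functor \eqref{eq:dissection_over_F_complete}, and the levelwise Hausdorff completion in $\mathsf{Ind}(\mathsf{Ban}_\dvf)$, are all functorial on their respective source categories, so composing with the map on tube algebras built in the previous step yields the chain map $(g,f)_* \colon \HAC(A_1, W_1, \varrho_1) \to \HAC(A_2, W_2, \varrho_2)$. Passing to $\qholim$ and then to $\varprojlim \varinjlim$ and to homology, as in Steps \ref{step:htpy_lim} and \ref{step:homology}, is likewise manifestly functorial, so we obtain the $\dvf$-linear map on $\HA_*$. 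Functoriality for composition of commuting squares reduces, through the uniqueness clause of Lemma \ref{lem:tensor_algebra}, to the functoriality of each intermediate step. I do not expect a serious obstacle: the single non-trivial verification is the inclusion $\tens g(I_1) \subseteq I_2$, which is forced by the commutativity of \eqref{eq:HA_W_functorial}; everything else is pushing the construction through already-established universal properties.
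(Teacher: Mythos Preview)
Your proposal is correct and follows essentially the same approach as the paper: reindex so that \(g\) and the \(\varrho_j\) are represented by strict morphisms of diagrams, use the universal property and uniqueness in Lemma~\ref{lem:tensor_algebra} to obtain \(\varrho_2^\# \circ \tens g = f \circ \varrho_1^\#\) and hence \(\tens g(I_1) \subseteq I_2\), extend to tube algebras, and then push through the remaining functorial steps. The paper's proof spells out the verification \(\varrho_{2,n}^\# \circ \tens(g_n)\circ\sigma_{W_{1,n}} = f \circ \varrho_{1,n}^\#\circ\sigma_{W_{1,n}}\) explicitly, but your appeal to uniqueness in Lemma~\ref{lem:tensor_algebra} is exactly this computation in compressed form.
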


\begin{proof}
  Reindexing the projective systems \(W_1\) and~\(W_2\), we may
  arrange that \(g\) and~\(\varrho_j\) for \(j=1,2\) are represented
  by a morphism of diagrams \(g_n\colon W_{1,n} \to W_{2,n}\) for
  \(n\in\N\) and a compatible family of bounded \(\dvr\)\nb-linear
  maps \(\varrho_{j,n} \colon W_{j,n} \onto A_j\) for all
  \(n\in\N\).  The commuting square~\eqref{eq:HA_W_functorial} says
  that \(f\circ \varrho_{1,n} = \varrho_{2,n} \circ g_n\) holds for
  sufficiently large~\(n\), and we may reindex so that this holds
  for all \(n\in\N\).  We assume all this to simplify the notation.
  By Lemma~\ref{lem:tensor_algebra}, the bounded linear maps
  \(\varrho_{j,n}\) and~\(g_n\) induce bounded \(\dvr\)\nb-algebra
  homomorphisms \(\varrho_{j,n}^\# \colon \tens W_{j,n} \to A_j\)
  and \(\tens(g_n) \colon \tens W_{1,n} \to \tens W_{2,n}\) with
  \(\tens(g_n) \circ \sigma_{W_{1,n}} = \sigma_{W_{2,n}}\circ g_n\)
  and \(\varrho_{j,n} = \varrho_{j,n}^\# \circ \sigma_{W_{j,n}}\).
  We compute
  \[
    \varrho_{2,n}^\# \circ \tens(g_n)\circ \sigma_{W_{1,n}}
    = \varrho_{2,n}^\# \circ \sigma_{W_{2,n}}\circ g_n
    = \varrho_{2,n}\circ g_n
    = f \circ \varrho_{1,n}
    = f \circ \varrho_{1,n}^\# \circ \sigma_{W_{1,n}}.
  \]
  The uniqueness part of Lemma~\ref{lem:tensor_algebra} implies
  \(\varrho_{2,n}^\# \circ \tens(g_n) = f \circ \varrho_{1,n}^\#\).
  Then~\(\tens(g_n)\) maps~\(I_{1,n}\) to~\(I_{2,n}\).  Hence
  \(\tens(g_n)\) extends uniquely to bounded homomorphisms
  \(\tub{\tens W_{1,n}}{I_{1,n}^l} \to \tub{\tens
    W_{2,n}}{I_{2,n}^l}\) for all \(l\in\N\).  These homomorphisms
  remain bounded in the linear growth bornologies because the latter
  is functorial for bounded homomorphisms.  The tensor product
  with~\(\dvf\), the \(X\)\nb-complex and the quasi-completion are
  functors as well.  Thus we get an induced chain map
  \[
    (g,f)_*\colon \HAC(A_1, W_1, \varrho_1) \to
    \HAC(A_2, W_2, \varrho_2).
  \]
  Here ``chain map'' means a morphism of \(\Z/2\)\nb-graded chain
  complexes of projective systems of inductive systems of Banach
  \(\dvf\)\nb-vector spaces.  Since homotopy projective limits,
  inductive limits, and homology are functorial as well, this
  induces a grading-preserving \(\dvf\)\nb-linear map
  \((g,f)_*\colon \HA_*(A_1, W_1, \varrho_1) \to \HA_*(A_2, W_2,
  \varrho_2)\).  It is easy to see that this construction is a
  functor.
\end{proof}

We have highlighted the maps
\(\ling{\tub{\tens W_1}{I_1^l}} \otimes \dvf\to \ling{\tub{\tens
    W_2}{I_2^l}} \otimes \dvf\) because they are what we will use in
later proofs.  We shall also need the following homotopy invariance
statement.  Actually, we will only need the special case
\(A_1=A_2=A\) and \(f=\id_A\), but this case is only notationally
easier.

\begin{proposition}
  \label{pro:homotopy_lifting_2}
  Let \(A_j\) and \(\varrho_j \colon W_j \onto A_j\) be as in
  Lemma~\textup{\ref{lem:hybrid_functorial}}.  Let
  \(f_0,f_1\colon A_1 \rightrightarrows A_2\) and
  \(H \colon A_1 \to A_2[t]\) be algebra homomorphisms with
  \(f_j = \ev_j\circ H\) for \(j=0,1\); that is, \(H\) is an
  \emph{elementary polynomial homotopy} between \(f_0\) and~\(f_1\).
  Let \(H' \colon W_1 \to W_2 \otimes \dvr[t]\) be a morphism of
  projective systems of bornological \(\dvr\)\nb-modules that makes
  the square
  \[
    \begin{tikzcd}
      W_1 \arrow[d, "\varrho_1"'] \arrow[r, "H'"] &
      W_2 \otimes \dvr[t] \arrow[d, "\varrho_2\otimes \id"] \\
      A_1 \arrow[r, "H"] &
      A_2[t]
    \end{tikzcd}
  \]
  commute.  Then \(g_j \defeq \ev_j\circ H'\colon W_1 \to W_2\) for
  \(j=0,1\) are morphisms of projective systems such that the square
  in~\eqref{eq:HA_W_functorial} commutes for \(f_j\) and~\(g_j\).
  There is a compatible family of morphisms of projective systems of
  bornological \(\dvf\)\nb-algebras
  \[
    (H',H)_*\colon \ling{\tub{\tens W_1}{I_1^l}} \otimes \dvf\to
    \ling{\tub{\tens W_2}{I_2^l}} \otimes \ling{\dvr[t]} \otimes \dvf
  \]
  for all \(l\in\N\), which provides elementary homotopies between
  the maps \((g_0,f_0)_*\) and \((g_1,f_1)_*\) built in
  Lemma~\textup{\ref{lem:hybrid_functorial}}.  The morphism
  \((H',H)_*\) induces a chain homotopy between the chain maps
  \[
    (g_0,,f_0)_*, (g_1,,f_1)_*
    \colon \HAC(A_1,W_1,\varrho_1) \rightrightarrows
    \HAC(A_2,W_2,\varrho_2).
  \]
  This implies an equality of maps on homology:
  \[
    (g_0,,f_0)_* = (g_1,,f_1)_*\colon \HA_*(A_1, W_1,\varrho_1)
    \to \HA_*(A_2, W_2,\varrho_2).
  \]
\end{proposition}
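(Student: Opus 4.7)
The plan is to apply Lemma~\ref{lem:hybrid_functorial} to the pair $(H', H)$, viewing $A_2[t]$ as an $\resf$-algebra equipped with the lift $(W_2 \otimes \dvr[t], \varrho_2 \otimes \id)$; the commuting square in the hypothesis is exactly what this lemma requires. This produces a chain map at the tube algebra level landing in $\ling{\tub{\tens(W_2 \otimes \dvr[t])}{J^l}} \otimes \dvf$, where $J$ is the kernel of the induced map $\tens(W_2 \otimes \dvr[t]) \onto A_2[t]$. The main technical step is to replace this target by $\ling{\tub{\tens W_2}{I_2^l}} \otimes \ling{\dvr[t]} \otimes \dvf$ via a natural comparison morphism; composing gives the desired $(H', H)_*$.

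To construct the comparison, apply the universal property of Lemma~\ref{lem:tensor_algebra} to the composite $W_2 \otimes \dvr[t] \hookrightarrow \tens W_2 \otimes \dvr[t]$; this yields a bounded $\dvr$-algebra homomorphism $\tens(W_2 \otimes \dvr[t]) \to \tens W_2 \otimes \dvr[t]$. Since $\dvr[t]$ is flat over $\dvr$, the kernel of $\tens W_2 \otimes \dvr[t] \onto A_2[t]$ is $I_2 \otimes \dvr[t]$ and its $l$-th power is $I_2^l \otimes \dvr[t]$, so the corresponding tube algebra is naturally $\tub{\tens W_2}{I_2^l} \otimes \dvr[t]$. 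The comparison sends $J$ into $I_2 \otimes \dvr[t]$, so it extends to a bounded homomorphism of tube algebras. After imposing linear growth bornologies and tensoring with $\dvf$, we land in $\ling{\tub{\tens W_2}{I_2^l}} \otimes \ling{\dvr[t]} \otimes \dvf$, using the compatibility of the linear growth bornology with tensor products by $\ling{\dvr[t]}$ established in the form needed in \cite{Cortinas-Meyer-Mukherjee:NAHA}. The uniqueness clause of Lemma~\ref{lem:tensor_algebra} together with $g_j = \ev_j \circ H'$ and $f_j = \ev_j \circ H$ shows that composing $(H', H)_*$ with the evaluation at $t = j$ recovers $(g_j, f_j)_*$ on the nose.

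Applying the $X$-complex functor and the quasi-completion, both of which are functorial, yields a chain map of the form asserted. The chain homotopy between $(g_0, f_0)_*$ and $(g_1, f_1)_*$ is then the standard one on $X$-complexes tensored with $\ling{\dvr[t]}$: the formal primitive $t^n \mapsto t^{n+1}/(n+1)$ makes sense after tensoring with $\dvf$ since $\car \dvf = 0$, and defines a contracting chain homotopy whose differential is $\ev_1 - \ev_0$. This is the same homotopy used in \cite{Cortinas-Meyer-Mukherjee:NAHA} for pro-dagger algebras, and it carries over verbatim because our construction is levelwise identical to that setting. The stated equality on homology then follows by functoriality of the homotopy limit, the inductive limit, and homology. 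The main obstacle is the bookkeeping around the linear growth bornology on the polynomial tensor factor and the identification $\ling{\tub{\tens W_2}{I_2^l} \otimes \dvr[t]} \to \ling{\tub{\tens W_2}{I_2^l}} \otimes \ling{\dvr[t]}$; once this compatibility is in place, all remaining ingredients are routine consequences of functoriality and the previously established homotopy invariance for pro-dagger algebras.
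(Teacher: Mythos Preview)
Your argument follows the same route as the paper: build $(H',H)_*$ via the universal property of $\tens$, pass through the comparison $\tens(W_2\otimes\dvr[t])\to(\tens W_2)\otimes\dvr[t]$ to land in $\tub{\tens W_2}{I_2^l}\otimes\dvr[t]$, invoke \cite{Cortinas-Cuntz-Meyer-Tamme:Nonarchimedean}*{Proposition~3.1.25} for the linear growth identification, and then cite~\cite{Cortinas-Meyer-Mukherjee:NAHA} for the chain homotopy on $X$-complexes.

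One point should be sharpened. After applying the $X$-complex functor the target is $X\bigl(\ling{\tub{\tens W_2}{I_2^l}}\otimes\ling{\dvr[t]}\otimes\dvf\bigr)$, \emph{not} $X\bigl(\ling{\tub{\tens W_2}{I_2^l}}\otimes\dvf\bigr)\otimes\ling{\dvr[t]}$, so ``the standard chain homotopy on $X$-complexes tensored with $\ling{\dvr[t]}$, via formal primitives'' is not literally what happens; formal integration alone does not produce a chain homotopy between $X(\ev_0)$ and $X(\ev_1)$ on an arbitrary $X(S\otimes\dvr[t])$. The paper makes explicit the missing hypothesis: the \emph{source} pro-algebra $\ling{\tub{\tens W_1}{I_1^\infty}}\otimes\dvf$ is quasi-free (its bimodule of noncommutative $1$-forms is projective), and this is precisely what lets the argument of \cite{Cortinas-Meyer-Mukherjee:NAHA}*{Proposition~4.6.1} convert an elementary algebra homotopy into a chain homotopy on $X$-complexes. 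Your citation of that source is right, but you should state the quasi-freeness rather than describe the homotopy as living on a tensor factor that is not actually present.
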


\begin{proof}
  The functoriality of the tensor algebra construction gives a
  unique morphism
  \(\tens(H')\colon \tens W_1 \to \tens (W_2 \otimes \dvr[t])\) with
  \(\tens(H') \circ \sigma_{W_1} = \sigma_{W_2\otimes \dvr[t]} \circ
  H'\).  The morphism
  \[
    \sigma_{W_2}\otimes \id_{\dvr[t]} \colon
    W_2 \otimes \dvr[t] \to (\tens W_2) \otimes \dvr[t]
  \]
  induces a unique morphism
  \[
    (\sigma_{W_2}\otimes \id_{\dvr[t]})^\# \colon
    \tens (W_2 \otimes \dvr[t]) \to (\tens W_2) \otimes \dvr[t]
  \]
  with
  \((\sigma_{W_2}\otimes \id_{\dvr[t]})^\# \circ \sigma_{W_2 \otimes
    \dvr[t]} = \sigma_{W_2}\otimes \id_{\dvr[t]}\).  We let
  \[
    (H',H)_* \defeq (\sigma_{W_2}\otimes \id_{\dvr[t]})^\# \circ
    \tens(H') \colon \tens W_1 \to (\tens W_2) \otimes \dvr[t].
  \]
  Using the universal property of the tensor algebra, we compute
  \begin{align*}
    (\id_{\tens W_2} \otimes \ev_j) \circ (H',H)_*
    &= \tens g_j\colon \tens W_1 \to \tens W_2
    \\ \shortintertext{for \(j=0,1\) and}
    (\varrho_2^\# \otimes \id_{\dvr[t]}) \circ (H',H)_*
    &= H \circ \varrho_1^\# \colon \tens W_1 \to A \otimes_\resf \resf[t].
  \end{align*}
  Therefore, \((H',H)_*\) maps the ideal
  \(I_1\defeq \ker (\varrho_1^\#\colon \tens W_1 \to A)\) to
  \(I_2 \otimes \dvr[t]\).  Thus it extends to a morphism of tube
  algebras
  \[
    \tub{\tens W_1}{I_1^l} \to
    \tub{(\tens W_2) \otimes \dvr[t]}{(I_2 \otimes \dvr[t])^l}
    \cong \tub{\tens W_2}{I_2^l} \otimes \dvr[t].
  \]
  Since the levelwise linear growth bornology is natural, this
  remains a morphism
  \[
    \ling{\tub{\tens W_1}{I_1^l}} \to
    \ling{(\tub{\tens W_2}{I_2^l} \otimes \dvr[t])}.
  \]
  The target is identified with
  \(\ling{\tub{\tens W_2}{I_2^l}} \otimes \ling{\dvr[t]}\) by
  \cite{Cortinas-Cuntz-Meyer-Tamme:Nonarchimedean}*{Proposition~3.1.25}.
  So we get a morphism of projective systems of \(\dvr\)\nb-algebras
  \[
    (H',H)_*\colon \ling{\tub{\tens W_1}{I_1^l}} \to
    \ling{\tub{\tens W_2}{I_2^l}} \otimes \ling{\dvr[t]}.
  \]
  By the computations above, this is an elementary homotopy between
  \((g_0,f_0)_*\) and \((g_1,f_1)_*\).  Tensoring with~\(\dvf\) and
  letting~\(l\) vary gives an elementary homotopy
  \[
    \ling{\tub{\tens W_1}{I_1^\infty}} \otimes \dvf \to
    \ling{\tub{\tens W_2}{I_2^\infty}} \otimes \dvf \otimes
    \ling{\dvr[t]}.
  \]
  The pro-algebras
  \(\ling{\tub{\tens W_j}{I_j^\infty}} \otimes \dvf\) that occur
  here are quasi-free, that is, their bimodules of (incomplete)
  noncommutative differential \(1\)\nb-forms are projective.
  Therefore, an elementary homotopy between two morphisms
  of pro-algebras as above induces a chain homotopy between the
  induced chain maps on the \(X\)\nb-complexes.  This is shown as in
  the proof of
  \cite{Cortinas-Meyer-Mukherjee:NAHA}*{Proposition~4.6.1}, leaving
  out completions everywhere.  Since the quasi-completion is an
  additive functor, this chain homotopy passes on to the chain
  complexes \(\HAC(A_j,W_j,\varrho_j)\) for \(j=1,2\).  Then
  the induced maps on the homology of the homotopy
  projective limit are equal.
\end{proof}

\section{Comparison of analytic cyclic homology theories}
\label{sec:compare_HA_theories}

One of our main goals is to prove the following: if~\(D\) is a
dagger algebra such that the induced bornology on~\(D/\dvgen D\) is
the fine one, then \(\HA_*(D) \cong \HA_*(D/\dvgen D)\), where the
analytic cyclic homology \(\HA_*(D)\) of the dagger algebra~\(D\) is
defined in~\cite{Cortinas-Meyer-Mukherjee:NAHA}.  The proof shows
that \(\HAC(A)\) and \(\diss \HAC(D)\) are both quasi-isomorphic to
the complex \(\HAC(A,D,\varrho)\) defined in the previous section.
In this section, we state the relevant theorems and prove all but
one of them.  The longest proof will only come in the next section.

First, we prove that \(\HAC(A,D,\varrho)\) is isomorphic to
\(\diss \HAC(D)\) for suitable pro-dagger algebras~\(D\).  We also
construct a canonical quasi-isomorphism
\(\HAC(A) \to \HAC(A,W,\varrho)\) if~\(W\) is a projective system of
torsionfree complete bornological \(\dvr\)\nb-modules that is fine
mod~\(\dvgen\) and \(\varrho\colon W\onto A\) is an epimorphism,
that is, represented by a compatible family of surjective bounded
\(\dvr\)\nb-linear maps \(W_n \to A\).  We show already in this
section how to reduce this theorem to the special case where~\(W\)
is constant as a projective system.  This special case of the
theorem is proven only in Section~\ref{sec:independence_lifting}.
We also apply our comparison theorem to two examples that were
already computed in~\cite{Cortinas-Meyer-Mukherjee:NAHA}.

Now we compare \(\HAC(A,D,\varrho)\) for a pro-dagger algebra
\(D=(D_n)_{n\in\N}\) to the analytic cyclic homology chain complex
\(\HAC(D)\) defined in~\cite{Cortinas-Meyer-Mukherjee:NAHA}.  Define
\(\tens D = (\tens D_n)_{n\in\N}\) as above and let
\[
  \jens D \defeq \ker (\id_D^\#\colon \tens D \to D).
\]
The \(X\)\nb-complex for a complete bornological algebra~\(D\),
which is denoted by \(\comb{X}(D)\)
in~\cite{Cortinas-Meyer-Mukherjee:NAHA}, is built using completed
tensor products.  We denote it by~\(\comb{X}\) here to distinguish
it from the \(X\)\nb-complex with incomplete tensor products that we
use in Step~\ref{step:X} of the construction of
\(\HAC(A)\).  We simplify the definition of \(\HAC(D)\)
in~\cite{Cortinas-Meyer-Mukherjee:NAHA} using
\cite{Cortinas-Meyer-Mukherjee:NAHA}*{Remark~4.4.5}, which says that
the linear growth bornology on \(\tub{D}{I^l}\) relative to the
ideal \(\tub{I}{I^l}\) is equal to the usual linear growth bornology
because \(\tub{D}{I^l}\bigm/\tub{I}{I^l} \cong D\) is already a
dagger algebra.  This gives
\[
  \HAC(D) =
  \comb{X}\bigl(\comb{\ling{\tub{\tens D_n}{\jens D_n^l}}}
    \otimes \dvf\bigr)_{n\in\N, l\in\N^*}
\]
as a \(\Z/2\)\nb-graded chain complex of projective systems of
complete bornological \(\dvf\)\nb-vector spaces.  To compare it with
\(\HAC(A,D,\varrho)\), we study \(\diss \HAC(D)\) instead.

\begin{proposition}
  \label{pro:HAC_dagger_order}
  Let \(D=(D_n)_{n\in\N}\) be a pro-dagger algebra.  Then
  \[
    \diss \HAC(D) \cong
    \Bigl(\comb{\diss X(\ling{\tub{\tens D_n}{\jens D_n^l}}\otimes
      \dvf)}\Bigr)_{n\in\N, l\in\N^*}.
  \]
\end{proposition}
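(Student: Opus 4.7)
The plan is to unwind both definitions and reduce the claim to two comparison facts established earlier: that bornological completion commutes with the formation of the $X$-complex, and that dissection commutes with completion in the sense of Proposition~\ref{pro:diss_comb}.

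First, since $\diss\colon \mathsf{CBor}_\dvf \to \mathsf{Ind}(\mathsf{Ban}_\dvf)$ is additive, it acts levelwise on chain complexes and on projective systems. Writing $B \defeq \ling{\tub{\tens D_n}{\jens D_n^l}} \otimes \dvf$ for the bornological $\dvf$-algebra at level $(n,l)$, it suffices to construct a natural isomorphism
\[
\diss \comb{X}(\comb{B}) \cong \comb{\diss X(B)}
\]
compatible with varying $(n,l)$ in the projective system.

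Second, I would identify $\comb{X}(\comb{B})$ with the levelwise bornological completion $\comb{X(B)}$ of the incomplete $X$-complex used in Step~\ref{step:X}. Since the completion functor $\comb{\phantom{I}}$ is left adjoint to the inclusion $\mathsf{CBor}_\dvf \hookrightarrow \mathsf{Bor}_\dvf$, it preserves the tensor products and quotients by the commutator submodule that enter the construction of $X(\cdot)$; applying $\comb{\phantom{I}}$ termwise to $X(B)$ therefore produces $\comb{X(B)} \cong \comb{X}(\comb{B})$ as complete chain complexes. Using also the identity $X(S \otimes \dvf) \cong X(S) \otimes \dvf$ from Step~\ref{step:X}, I can freely swap the order of tensoring with $\dvf$ and forming the $X$-complex.

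Third, I would apply Proposition~\ref{pro:diss_comb} termwise to pass dissection through completion, obtaining $\diss \comb{X(B)} \cong \comb{\diss X(B)}$. Combined with the previous step, this yields the isomorphism claimed in the proposition, and the construction is natural in $(n,l)$ because all the intermediate steps are functorial.

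The main obstacle is verifying one of the equivalent conditions of Proposition~\ref{pro:diss_comb} for the bornological $\dvf$-vector spaces $B$ and $\Omega^1(B)/[\cdot,\cdot]$. I expect this to follow from the specific structure of the linear growth bornology on the semidagger algebra $\ling{\tub{\tens D_n}{\jens D_n^l}}$: its bounded $\dvr$-submodules and their $\dvgen$-adic completions embed coherently into the torsionfree dagger-algebra completion $\comb{\ling{\tub{\tens D_n}{\jens D_n^l}}}$, so that for bounded $\dvr$-submodules $S \subseteq T$ the map $\coma{S} \to \coma{T}$ factors through a common complete, torsionfree ambient $\dvr$-module. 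This forces the kernel-stabilization condition~(iii) of Proposition~\ref{pro:diss_comb}. A parallel analysis using the bornological structure of $\Omega^1(B) = B^+ \otimes B$, together with the preservation of embeddings by the tensor product, handles the second term of the $X$-complex.
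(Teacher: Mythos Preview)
Your approach is essentially the same as the paper's: reduce to a levelwise statement, use the natural isomorphism \(\comb{X}(\comb{R}) \cong \comb{X(R)}\), and then apply Proposition~\ref{pro:diss_comb} to interchange \(\diss\) and completion. The paper also notes the torsionfreeness of \(\ling{\tub{\tens D_n}{\jens D_n^l}}\) to justify moving \(\otimes\,\dvf\) past completion, which you handle via the remark at the end of Step~\ref{step:X}.

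The one place your argument is thinner than the paper's is the verification of condition~(iii) in Proposition~\ref{pro:diss_comb}. You argue that the \(\dvgen\)-adic completions \(\coma{S}\) of bounded submodules ``embed coherently'' into the dagger completion; but this injectivity is precisely the content to be checked and does not follow formally from torsionfreeness of the ambient completion. The paper instead invokes the explicit description of the linear growth bornology on \(\tub{\tens D_n}{\jens D_n^\infty}\) from \cite{Cortinas-Meyer-Mukherjee:NAHA}*{Proposition~4.4.12}: there is a cofinal family of bounded submodules \(D_l(M,\alpha,f)\) whose mutual inclusions induce \emph{injective} maps between their \(\dvgen\)-adic completions, which is strictly stronger than kernel stabilisation and immediately gives~(iii). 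For the odd part, your plan to use \(\Omega^1(B)=B^+\otimes B\) and preservation of embeddings under tensor products does not directly address the quotient by commutators. The paper handles this by identifying \(\tub{\tens D_n}{\jens D_n^\infty}\) with even-degree noncommutative forms on~\(D_n\) and the odd part of the \(X\)-complex with odd-degree forms, so that the same explicit bounded-set description (and hence the same injectivity argument) applies symmetrically. You should tighten both points along these lines.
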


\begin{proof}
  The isomorphism is proven levelwise, that is, we may fix
  \(n\in\N\) and \(l\in\N^*\).  The bornological \(\dvr\)\nb-algebra
  \(\ling{\tub{\tens D_n}{\jens D_n^l}}\) is torsionfree by
  \cite{Meyer-Mukherjee:Bornological_tf}*{Proposition~4.11}.
  Therefore,
  \(\comb{\ling{\tub{\tens D_n}{\jens D_n^l}}} \otimes \dvf \cong
  \comb{\ling{\tub{\tens D_n}{\jens D_n^l}} \otimes \dvf}\) by
  \cite{Meyer-Mukherjee:Bornological_tf}*{Proposition~4.9}.  The
  complete and incomplete versions of the \(X\)\nb-complex are
  related by a canonical isomorphism
  \(\comb{X}(\comb{R}) \cong \comb{X(R)}\) for all bornological
  \(\dvr\)\nb-algebras~\(R\).  Therefore,
  \[
    \comb{X}\bigl(\comb{\ling{\tub{\tens D_n}{\jens D_n^l}}
      \otimes \dvf}\bigr)
    \cong \comb{X(\ling{\tub{\tens D_n}{\jens D_n^l}}\otimes \dvf)}.
  \]

  The description of the linear growth bornology on
  \(\tub{\tens D_n}{\jens D_n^\infty}\) in
  \cite{Cortinas-Meyer-Mukherjee:NAHA}*{Proposition~4.4.12} shows
  that \(X(\ling{\tub{\tens D_n}{\jens D_n^l}}\otimes \dvf)\)
  satisfies the third condition in Proposition~\ref{pro:diss_comb}.
  Namely, any subset of \(\tub{\tens D_n}{\jens D_n^\infty}\) of
  linear growth is contained in a bounded subset \(D_l(M,\alpha,f)\)
  that depends on a bounded \(\dvr\)\nb-submodule
  \(M\subseteq D_n\), \(\alpha\in \Q \cap (0,1/l)\) and
  \(f\in\N_0\).  The crucial feature is that the inclusion maps
  between the subsets \(D_l(M,\alpha,f)\) induce injective maps
  between their completions.  A closely related point is that the
  completed tensor product functor for complete torsionfree
  bornological \(\dvr\)\nb-modules preserves injectivity of maps
  (see \cite{Cortinas-Meyer-Mukherjee:NAHA}*{Proposition~2.4.5}).
  The same argument works for the odd part of the \(X\)\nb-complex,
  which is identified with a suitable space of noncommutative
  differential forms of odd degree, just as
  \(\tub{\tens D_n}{\jens D_n^\infty}\) is identified with a space
  of even-degree differential forms.  Now
  Proposition~\ref{pro:diss_comb} shows that we may change the order
  of dissection and completion.  This gives the desired isomorphism.
\end{proof}

A projective system of bornological algebras is called
\emph{analytically nilpotent} (see
\cite{Cortinas-Meyer-Mukherjee:NAHA}*{Definition~4.3.1}) if it is
nilpotent mod~\(\dvgen\) and isomorphic to a projective system of
dagger algebras.  A projective system of bornological algebras
\((B_n)_{n\in\N}\) is nilpotent mod~\(\dvgen\) if for any \(n\in\N\)
there are \(m\ge n\) and \(l\in\N^*\) so that the structure map
\(B_m \to B_n\) maps \(B_m^l\) into \(\dvgen\cdot B_n\).
In the following, we call morphisms of projective systems of
bornological algebras ``pro-homomorphisms''.

\begin{proposition}
  \label{Monsky-Washnitzer-general}
  Let~\(A\) be an \(\resf\)\nb-algebra.  Give~\(A\) the fine
  bornology and view it as a constant projective system of
  bornological \(\dvr\)\nb-algebras.  Let \(D = (D_n)_{n\in\N}\) be
  a projective system of dagger algebras and let
  \(\varrho \colon D \to A\) be a pro-homomorphism; it is
  represented by a compatible family of bounded algebra
  surjections \(\varrho_n \colon D_n \onto A\) for \(n\ge n_0\).
  Assume that \(\ker \varrho = (\ker \varrho_n)_{n\in\N}\) is
  analytically nilpotent.  Then
  \[
    \diss \HAC(D) \cong \HAC(A, D, \varrho).
  \]
\end{proposition}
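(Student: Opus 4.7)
The plan is to construct a natural morphism of pro-ind complexes $\diss \HAC(D) \to \HAC(A, D, \varrho)$ from the inclusion of ideals $\jens D_n \subseteq I_n$, and then to prove that this morphism is an isomorphism by exploiting the analytic nilpotence of $\ker \varrho$. By Proposition~\ref{pro:HAC_dagger_order}, $\diss \HAC(D)$ is naturally isomorphic to the pro-ind complex built from the tube algebras of $\tens D_n$ along $\jens D_n = \ker(\id_{D_n}^\#)$, while $\HAC(A, D, \varrho)$ uses tube algebras along $I_n = \ker(\varrho_n^\#)$. Since $\varrho_n^\#$ factors as $\varrho_n \circ \id_{D_n}^\#$ by the universal property of $\tens D_n$, we have $\jens D_n \subseteq I_n$, and this yields a bounded $\dvr$-algebra inclusion $\tub{\tens D_n}{\jens D_n^l} \hookrightarrow \tub{\tens D_n}{I_n^l}$ that remains bounded after applying the linear growth bornology and tensoring with $\dvf$. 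Functoriality of the remaining steps (the $X$-complex, dissection and quasi-completion) produces the natural morphism of pro-ind complexes.

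The next step is to compare the two pro-systems of bornological $\dvf$-algebras $(\ling{\tub{\tens D_n}{\jens D_n^l}}\otimes\dvf)_{n,l}$ and $(\ling{\tub{\tens D_n}{I_n^l}}\otimes\dvf)_{n,l}$ using analytic nilpotence. The splitting $\sigma_{D_m}\colon D_m \to \tens D_m$ gives a decomposition $I_m = \jens D_m \oplus \sigma_{D_m}(\ker \varrho_m)$ as $\dvr$-modules, so the extra part of $\tub{\tens D_m}{I_m^{l'}}$ beyond $\tub{\tens D_m}{\jens D_m^{l'}}$ is generated by products involving lifted elements of $\ker \varrho_m$. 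Iterating the defining property of analytic nilpotence yields, for every $s \in \N$, indices $m \geq n$ and $c \in \N$ such that the structure map $D_m \to D_n$ sends $(\ker \varrho_m)^c$ into $\dvgen^s \cdot \ker \varrho_n$. When an element of $\tub{\tens D_m}{I_m^{l'}}$ is transported to $\tub{\tens D_n}{I_n^l}$ by the structure map, its $\ker \varrho$-components thus become divisible by arbitrarily high powers of $\dvgen$ in sufficiently deep levels of the projective system, and are then absorbed into the linear growth bornology of $\tub{\tens D_n}{\jens D_n^l}$.

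The main obstacle will be to make this absorption precise and uniform in the exponent $j$ of the tube algebra. A single subset of $\tub{\tens D_m}{I_m^{l'}}$ bounded in linear growth involves all degrees $j \geq 0$ simultaneously, whereas the constant $c$ needed to achieve $\dvgen^s$-divisibility grows with $s$ (exponentially, by iteration). This is where the explicit description of bounded subsets of tube algebras in the linear growth bornology from~\cite{Cortinas-Meyer-Mukherjee:NAHA}*{Proposition~4.4.12} becomes essential: it encodes the trade-off between the exponent $j$ and the growth rate of the coefficients, and should permit a single pair $(m, l')$ to handle every $j$ at once by absorbing the exponentially growing nilpotence estimates into the linear growth bornology.
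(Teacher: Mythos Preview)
Your setup of the forward map via $\jens D_n \subseteq I_n$ and the reduction (via Proposition~\ref{pro:HAC_dagger_order}) to comparing the two tube pro-algebras is correct and matches the paper. However, your plan for the inverse direction---controlling the linear growth bornology by hand via iterated nilpotence estimates---is harder than necessary, and the uniformity obstacle you flag is genuine and not obviously resolved by the bounded-set description in \cite{Cortinas-Meyer-Mukherjee:NAHA}*{Proposition~4.4.12}.

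The paper sidesteps this obstacle entirely by working one level earlier, with the tube pro-algebras \emph{before} taking linear growth bornologies, and by invoking the universal property of tube algebras rather than explicit estimates. The canonical map $\tens D \to \tub{\tens D}{(\jens D)^\infty}$ sends $I$ into the kernel~$E$ of the composite $\tub{\tens D}{(\jens D)^\infty} \onto D \onto A$, and~$E$ sits in an extension
\[
  \tub{\jens D}{(\jens D)^\infty} \into E \onto \ker \varrho
\]
obtained by pullback along $\ker\varrho \hookrightarrow D$. Both outer terms are nilpotent mod~$\dvgen$ (the first by \cite{Cortinas-Meyer-Mukherjee:NAHA}*{Proposition~4.2.4}, the second by hypothesis), hence so is~$E$ by \cite{Cortinas-Meyer-Mukherjee:NAHA}*{Proposition~4.2.6}. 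Now the universal property of tube algebras in \cite{Cortinas-Meyer-Mukherjee:NAHA}*{Proposition~4.2.4} produces a pro-homomorphism $\tub{\tens D}{I^\infty} \to \tub{\tens D}{(\jens D)^\infty}$ extending the identity on~$\tens D$, and the uniqueness clause of the same universal property shows the two maps are mutually inverse. Thus $\tub{\tens D}{I^\infty} \cong \tub{\tens D}{(\jens D)^\infty}$ as projective systems of bornological algebras, and all subsequent functors (linear growth, $\otimes\,\dvf$, $X$-complex, dissection, completion) transport this isomorphism to the desired one. The universal property packages exactly the ``absorption of nilpotence into the tube structure'' you are attempting by hand, and does so at the raw tube-algebra level where no growth estimates are needed.
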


\begin{proof}
  We use the description of \(\diss \HAC(D)\) in
  Proposition~\ref{pro:HAC_dagger_order}.  The only difference to
  the definition of \(\HAC(A,D,\varrho)\) is the tube algebra of
  \(\tens D\) that is used.  Namely, \(\diss \HAC(D)\) uses the tube
  algebra \(\tub{\tens D}{(\jens D)^\infty}\), whereas
  \(\HAC(A,D,\varrho)\) uses the tube algebra
  \(\tub{\tens D}{I^\infty}\) with
  \(I \defeq \ker (\varrho^\#\colon \tens D \to A)\).  The proof
  shows that these two tube algebras are
  isomorphic as projective systems of bornological algebras.

  Since \(\jens D \subseteq I\), the canonical pro-homomorphism
  \(\tens D \to \tub{\tens D}{I^\infty}\) factors uniquely through a
  pro-homomorphism
  \(\tub{\tens D}{(\jens D)^\infty} \to \tub{\tens D}{I^\infty}\).
  To construct its inverse, we form a pull back of extensions as in
  the following commuting diagram with exact rows:
  \[
    \begin{tikzcd}
      \tub{\jens D}{(\jens D)^\infty} \ar[r, dotted, >->] \ar[d, equal] &
      E \ar[dr,  phantom, "\ulcorner", very near start]
      \ar[r, dotted, ->>]  \ar[d, dotted, >->] &
      \ker \varrho \ar[d, >->]\\
      \tub{\jens D}{(\jens D)^\infty} \ar[r, >->] &
      \tub{\tens D}{(\jens D)^\infty} \ar[r, ->>] \ar[d] &
      D  \ar[d]\\
      & A \ar[r, equal] & A
    \end{tikzcd}
  \]
  By assumption, \(\ker \varrho\) is nilpotent mod~\(\dvgen\).  So
  is \(\tub{\jens D}{(\jens D)^\infty}\) by
  \cite{Cortinas-Meyer-Mukherjee:NAHA}*{Proposition~4.2.4}.
  Then~\(E\) is nilpotent mod~\(\dvgen\) by
  \cite{Cortinas-Meyer-Mukherjee:NAHA}*{Proposition~4.2.6}.  The
  universal property of pullbacks implies that~\(E\) is the kernel
  of the vertical map \(\tub{\tens D}{(\jens D)^\infty} \to A\).  So
  the canonical pro-homomorphism
  \(\tens D \to\tub{\tens D}{(\jens D)^\infty}\) maps~\(I\)
  to~\(E\).  Since~\(E\) is analytically nilpotent,
  \cite{Cortinas-Meyer-Mukherjee:NAHA}*{Proposition~4.2.4} gives a
  pro-homomorphism
  \(\tub{\tens D}{I^\infty} \to \tub{\tens D}{(\jens D)^\infty}\).
  The composite maps between \(\tub{\tens D}{I^\infty}\) and
  \(\tub{\tens D}{(\jens D)^\infty}\) in both directions extend the
  canonical maps \(\tens D \to \tub{\tens D}{I^\infty}\) and
  \(\tens D \to \tub{\tens D}{(\jens D)^\infty}\), respectively.
  Then the uniqueness part of the universal property of tube
  algebras in
  \cite{Cortinas-Meyer-Mukherjee:NAHA}*{Proposition~4.2.4} shows
  that the pro-homomorphisms
  \(\tub{\tens D}{I^\infty} \leftrightarrow \tub{\tens D}{(\jens
    D)^\infty}\) built above are inverse to each other.
\end{proof}

\begin{corollary}
  \label{cor:Monsky-Washnitzer_lift}
  Let~\(D\) be a dagger algebra.  Let \(A\defeq D/\dvgen D\) and let
  \(\varrho\colon D\onto A\) be the quotient map.  Then
  \(\HAC(D/\dvgen D, D , \varrho)\cong \diss \HAC(D)\).
\end{corollary}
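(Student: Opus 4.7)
The plan is to deduce this corollary as a direct specialisation of Proposition \ref{Monsky-Washnitzer-general}. I would view the dagger algebra $D$ as a constant projective system of dagger algebras (indexed by $\N$ with identity structure maps), and take the quotient map $\varrho\colon D \onto A = D/\dvgen D$ as the pro-homomorphism to $A$. The standing assumption of the section that $D$ is fine mod~$\dvgen$ ensures that $\varrho$ is bounded when $A$ carries the fine bornology, so that the data $(A, D, \varrho)$ meet the requirements of the construction of $\HAC(A, W, \varrho)$ in Section~\ref{sec:homotopy_stability}. The surjectivity of $\varrho$ as a bounded algebra homomorphism is clear.

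The one substantive hypothesis to check is that $\ker \varrho = \dvgen D$, regarded as a constant projective system of bornological $\dvr$\nb-algebras, is analytically nilpotent in the sense of \cite{Cortinas-Meyer-Mukherjee:NAHA}*{Definition~4.3.1}. I would split this into the two defining conditions. The nilpotence mod~$\dvgen$ is immediate: for every $n \in \N$, taking $m = n$ and $l = 2$, the identity structure map sends $(\dvgen D)^2 = \dvgen^2 D^2 \subseteq \dvgen \cdot (\dvgen D)$, so the condition is met. For the remaining condition, I must show that $\dvgen D$ with the subspace bornology inherited from $D$ is isomorphic to a dagger algebra. I expect this to follow from standard permanence properties: $\dvgen D$ is an ideal in the dagger algebra $D$ equipped with the subspace bornology, so it inherits the linear growth (semi-dagger) property, and completeness transfers because $\dvgen D$ is bornologically closed in the complete bornological module $D$.

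With both conditions verified, Proposition \ref{Monsky-Washnitzer-general} applies and yields the desired isomorphism $\HAC(D/\dvgen D, D, \varrho) \cong \diss \HAC(D)$ directly. The main obstacle I anticipate is the verification that $\dvgen D$ is genuinely a dagger algebra (or at least a constant projective system isomorphic to one) in the precise bornological sense used in \cite{Cortinas-Meyer-Mukherjee:NAHA}; this may require pinning down a specific permanence lemma or writing a short lemma of its own, but no deeper input is needed beyond the already-established theory of tube algebras and analytically nilpotent ideals.
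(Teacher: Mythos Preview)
Your proposal is correct and matches the paper's own proof essentially verbatim: the paper also specialises Proposition~\ref{Monsky-Washnitzer-general} to the constant projective system~\(D\) with \(\ker\varrho=\dvgen D\), and verifies nilpotence mod~\(\dvgen\) via \((\dvgen D)^2\subseteq \dvgen\cdot\dvgen D\). The paper is in fact terser than you are---it does not spell out the dagger-algebra half of ``analytically nilpotent'' for \(\dvgen D\)---so your extra care there is not misplaced, but the argument you sketch (semidagger and completeness inherited by the closed ideal \(\dvgen D\) in the torsionfree complete~\(D\)) is routine.
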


\begin{proof}
  This is the special case of
  Proposition~\ref{Monsky-Washnitzer-general} where~\(D\) is a
  constant projective system and \(\ker \varrho = \dvgen D\).  This
  constant projective system is nilpotent mod~\(\dvgen\) because
  \((\dvgen D)^2 \subseteq \dvgen\cdot \dvgen D\).
\end{proof}

Now we compare \(\HAC(A,W,\varrho)\) to \(\HAC(A)\).  We first
assume~\(W\) to be a constant projective system.

\begin{lemma}
  \label{lem:compare_HAC_A_to_W}
  Let~\(A\) be an \(\resf\)\nb-algebra.  Let~\(W\) be a complete,
  torsionfree bornological \(\dvr\)\nb-module, and let
  \(\varrho\colon W\to A\) be a \(\dvr\)\nb-module map.
  If~\(\varrho\) is surjective, then there is a map
  \(s\colon A \to W\) with \(\varrho\circ s = \id_A\).  This induces
  a bounded \(\dvr\)\nb-linear map \(s^\#\colon \dvr[A] \to W\) with
  \(\varrho\circ s^\# = \id_A^\#\), which further induces a chain
  map
  \[
    (\id_A,s^\#)_*\colon \HAC(A) \to \HAC(A,W,\varrho).
  \]
\end{lemma}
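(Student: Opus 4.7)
The proof is essentially an application of the universal property of the free module $\dvr[A]$ combined with the functoriality established in Lemma~\ref{lem:hybrid_functorial}. First, I would observe that $\varrho\colon W\to A$ is in particular a surjection of underlying sets, so by the axiom of choice we may pick a set-theoretic section $s\colon A\to W$ with $\varrho\circ s = \id_A$. No linearity or boundedness on $s$ is required at this stage; only the set map is needed.

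Next I would invoke the universal property of $\dvr[A]$ from Step~\ref{step:torsionfree_lifting}: since $\dvr[A]$ carries the fine bornology and is freely generated as a $\dvr$-module by the set $A$, the set map $s\colon A\to W$ extends uniquely to a bounded $\dvr$-linear map $s^\#\colon \dvr[A]\to W$. Here completeness and torsionfreeness of $W$ play no role in building $s^\#$; they are only needed so that $W$ is of the sort used to define $\HAC(A,W,\varrho)$. To check $\varrho\circ s^\# = \id_A^\#$, I note that both sides are bounded $\dvr$-linear maps $\dvr[A]\to A$, and they agree on the canonical generators $A\subseteq \dvr[A]$ because $\varrho\circ s = \id_A$. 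By the uniqueness half of the universal property, they coincide.

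With these pieces in place, I would apply Lemma~\ref{lem:hybrid_functorial} to the commuting square
\[
  \begin{tikzcd}
    \dvr[A] \arrow[d, "\id_A^\#"'] \arrow[r, "s^\#"] & W \arrow[d, "\varrho"] \\
    A \arrow[r, "\id_A"] & A
  \end{tikzcd}
\]
where the left column realises $\HAC(A) = \HAC(A,\dvr[A],\id_A^\#)$ and the right column realises $\HAC(A,W,\varrho)$, both viewed as constant projective systems. The lemma immediately produces the desired chain map $(\id_A,s^\#)_*\colon \HAC(A)\to \HAC(A,W,\varrho)$ of pro-ind complexes.

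No step here is particularly obstacle-laden; the only delicate point is being clear that $s$ is merely a set-theoretic choice (with no compatibility with the bornologies or module structures on either side), and that the bounded linear lift $s^\#$ comes for free from the fine bornology on $\dvr[A]$. Once this is flagged, the rest is a direct appeal to Lemma~\ref{lem:hybrid_functorial}.
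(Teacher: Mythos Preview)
Your proposal is correct and follows essentially the same approach as the paper's proof: obtain a set-theoretic section~\(s\) from surjectivity, extend it to the bounded \(\dvr\)-linear map~\(s^\#\) via the fine bornology on \(\dvr[A]\), verify \(\varrho\circ s^\# = \id_A^\#\), and then invoke Lemma~\ref{lem:hybrid_functorial}. Your version is slightly more explicit (drawing the commuting square, flagging the axiom of choice), but there is no substantive difference.
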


\begin{proof}
  The map~\(s\) exists because~\(\varrho\) is surjective.  It
  induces a \(\dvr\)\nb-linear map \(s^\#\colon \dvr[A] \to W\),
  which is bounded because~\(\dvr[A]\) carries the fine bornology.
  The equation \(\varrho\circ s^\# = \id_A^\#\) follows from
  \(\varrho\circ s = \id_A\).  The pair of maps
  \(\id_A\colon A\to A\) and \(s^\#\colon \dvr[A] \to W\) induces
  the desired chain map \((\id_A,s^\#)_*\) by
  Lemma~\ref{lem:hybrid_functorial}.
\end{proof}

The following theorem is one of our main results.  Its proof is long
and will occupy all of Section~\ref{sec:independence_lifting}.
Before we prove it, we deduce some further results from it.

\begin{theorem}
  \label{the:HA_through_lift}
  Let~\(A\) be an \(\resf\)\nb-algebra.  Let~\(W\) be a complete,
  torsionfree bornological \(\dvr\)\nb-module, and let
  \(\varrho\colon W\to A\) be a \(\dvr\)\nb-module map.  Assume
  that~\(\varrho\) is surjective and that the induced bornology on
  \(W/\dvgen W\) is the fine one.  Then the chain map
  \(\HAC(A) \to \HAC(A,W,\varrho)\) in
  Lemma~\textup{\ref{lem:compare_HAC_A_to_W}} is a
  quasi-isomorphism.  Thus the induced map
  \(\HA_*(A) \to \HA_*(A,W,\varrho)\) is an isomorphism of
  \(\dvf\)\nb-vector spaces.
\end{theorem}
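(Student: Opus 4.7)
The plan is to construct a quasi-inverse chain map $\Phi \colon \HAC(A,W,\varrho) \to \HAC(A)$ to the chain map $(\id_A, s^\#)_*$ of Lemma~\ref{lem:compare_HAC_A_to_W}. The essential difficulty, and the reason the proof is postponed to Section~\ref{sec:independence_lifting}, is that there need be no natural bounded $\dvr$-linear map $W \to \dvr[A]$ compatible with the quotient maps to $A$: the free $\dvr$-module $\dvr[A]$ does not provide a canonical expression of elements of $W$ as $\dvr$-combinations of elements of $A$. Instead, one builds $\Phi$ from a family of partial $\dvr$-linear maps defined only on bounded $\dvr$-submodules of $W$, exploiting that the quasi-completion functor $\comb{\diss(\cdot)}$ only requires partial data to be compatible up to the kernel-stabilisation criterion of Proposition~\ref{pro:diss_comb}.

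For each bounded $\dvr$-submodule $S \subseteq W$, the image $\varrho(S) \subseteq A$ is bounded in the fine bornology, hence finitely generated as a $\dvr$-module. This finiteness lets one construct a bounded $\dvr$-linear map $t_S \colon S \to \dvr[A]$ whose composition with $\id_A^\#$ agrees with $\varrho|_S$ modulo a $\dvgen$-divisible correction term. The family $(t_S)$ is not globally compatible; but once extended to tensor algebras, then passed through the tube algebra construction (which absorbs $\dvgen$-divisible errors in the linear growth bornology, since $I = \ker(\id_A^\#)$ contains $\dvgen \cdot \tens\dvr[A]$), tensored with $\dvf$, and quasi-completed, it assembles into a well-defined morphism of chain complexes in $\overleftarrow{\mathsf{Ind}(\mathsf{Ban}_\dvf)}$. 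This produces $\Phi$.

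To verify that $\Phi$ is a chain-homotopy inverse to $(\id_A, s^\#)_*$, we compute the two composites. On bounded submodules each composite is a partial $\dvr$-linear endomorphism (of $\dvr[A]$ or $W$, respectively) that covers $\id_A$ and differs from the identity by a $\dvgen$-divisible term. Applying Proposition~\ref{pro:homotopy_lifting_2} with a linear interpolation between the composite partial map and the identity, covered by the constant identity homotopy on $A$, yields an elementary polynomial homotopy, which induces the desired chain homotopy on $\HAC$. The main obstacle throughout is the technical bookkeeping in verifying the stabilisation condition of Proposition~\ref{pro:diss_comb} at each stage of the construction of $\HAC$ (tensor algebra, tube algebra, linear growth bornology, tensoring with $\dvf$, $X$-complex, and quasi-completion), uniformly in the choice of bounded submodule~$S$.
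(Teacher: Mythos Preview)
Your outline has the right instinct---build local lifts \(t_S\colon S\to\dvr[A]\) covering \(\id_A\), then feed them through the machinery---but the central claim that these partial maps ``assemble into a well-defined morphism of chain complexes'' \(\Phi\colon\HAC(A,W,\varrho)\to\HAC(A)\) is exactly the step that fails, and the paper's proof is organised around avoiding it rather than achieving it.

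The maps \(t_S\) for different bounded submodules~\(S\) need not agree on overlaps: for \(S\subseteq S'\) the difference \(t_{S'}|_S-t_S\) lands in \(\ker(\id_A^\#)\), but nothing forces it to vanish, and the subsequent operations (tube algebra, linear growth bornology, \(\otimes\dvf\), quasi-completion) do not kill such discrepancies.  A morphism of inductive systems requires actual compatibility, not compatibility up to an element of an ideal.  Your appeal to Proposition~\ref{pro:diss_comb} is misplaced: that proposition compares two completion functors on a \emph{single} bornological space by a kernel-stabilisation condition; it says nothing about gluing a family of mutually incompatible partial maps into a global one.  Likewise, the tube algebra makes \(\dvgen^{-1}I^l\) bounded---it does not make elements of~\(I\) zero, so it cannot ``absorb'' the incompatibilities.

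The paper's proof proceeds differently.  It never constructs a global inverse~\(\Phi\).  Instead it introduces the notion of a \emph{local chain homotopy equivalence} (Definition~\ref{def:locally_contractible}): a chain map whose mapping cone is \emph{locally contractible}, meaning that for each projective index~\(n\) there is some \(k\ge n\) such that every finite piece of the structure map \(C_k\to C_n\) is null-homotopic---with no compatibility required between the null-homotopies for different pieces.  Lemma~\ref{lem:locally_contractible_exact} shows that locally contractible complexes are exact, so local chain homotopy equivalences are quasi-isomorphisms.  The local lifts \(f_M\colon M\to\dvr[A]\) (Lemma~\ref{lem:local_inverses_W_to_VA}, which gives exact lifts, not lifts ``modulo a \(\dvgen\)-divisible correction'') and the linear homotopies from Proposition~\ref{pro:homotopy_lifting_2} then supply precisely the incoherent local data needed to verify the criterion of Lemma~\ref{lem:explicit_local_homotopy_equivalences}.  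A further subtlety you do not address is that for \emph{fixed} tube parameter~\(l\) the algebras are not quasi-free, so the homotopy-invariance argument for the \(X\)-complex does not apply levelwise; the paper handles this by passing through the truncated bicomplex \(\mathcal{X}^{(2)}\) and using quasi-freeness of the full pro-algebra (Lemma~\ref{tube-tensor-quasi_free}), at the cost of shifting~\(l\).
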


\begin{corollary}
  Let~\(A\) be the coordinate ring of a smooth, affine variety~\(X\)
  over~\(\resf\) of relative dimension~\(1\).  Let~\(R\) be a
  smooth, commutative \(\dvr\)\nb-algebra with
  \(R/\dvgen R \cong A\).  Give~\(R\) the fine bornology.  For
  \(*=0,1\), the following homology groups are isomorphic:
  \begin{enumerate}
  \item \(\HA_*(A)\);
  \item \(\HA_*(A, R^\updagger, R^\updagger \onto A)\);
  \item \(\HA_*(R^\updagger)\);
  \item the de Rham cohomology of \(R^\updagger \otimes \dvf\);
  \item the Monsky--Washnitzer cohomology of~\(X\).
  \end{enumerate}
  This is isomorphic to the rigid cohomology
  \(H_{\mathrm{rig}}^*(A,\dvf)\) of~\(X\) if
  \(\mathrm{char}(\resf)\neq 0\).
\end{corollary}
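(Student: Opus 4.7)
The plan is to chain together five isomorphisms, combining the theorems proven above with established comparisons from the literature on dagger algebras and rigid cohomology.

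For $(\mathrm{i}) \cong (\mathrm{ii})$, I would apply Theorem~\ref{the:HA_through_lift} to $W = R^\updagger$ together with the surjection $\varrho \colon R^\updagger \onto A$ induced by the canonical isomorphism $R^\updagger/\dvgen R^\updagger \cong R/\dvgen R \cong A$. The dagger algebra $R^\updagger$ is complete and torsionfree as a bornological $\dvr$\nb-module, and its reduction mod~$\dvgen$ inherits the fine bornology from~$R$, so all the hypotheses of the theorem are met. For $(\mathrm{ii}) \cong (\mathrm{iii})$, I would apply Corollary~\ref{cor:Monsky-Washnitzer_lift} with $D = R^\updagger$ to obtain an isomorphism $\HAC(A, R^\updagger, \varrho) \cong \diss \HAC(R^\updagger)$ of chain complexes of projective systems of inductive systems of Banach $\dvf$\nb-vector spaces. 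Passing to homotopy projective limits, inductive limits, and homology as in Steps~\ref{step:htpy_lim} and~\ref{step:homology}, and using that these operations are compatible with the dissection functor in exactly the manner exploited in the proof of Theorem~\ref{theorem:bivariant_specialises}, this descends to $\HA_*(A, R^\updagger, \varrho) \cong \HA_*(R^\updagger)$.

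The step $(\mathrm{iii}) \cong (\mathrm{iv})$ is the main obstacle and the only place where the relative dimension~$1$ hypothesis is genuinely needed. I would split it into two substeps: first, for a smooth dagger algebra of relative dimension~$1$ over~$\dvr$, one has $\HA_*(R^\updagger) \cong \HP_*(R^\updagger)$, as asserted in the introduction; then the classical non-commutative Hochschild--Kostant--Rosenberg comparison for smooth characteristic-zero algebras identifies $\HP_*(R^\updagger)$ with the de Rham cohomology of $R^\updagger \otimes \dvf$. The first substep fails in higher relative dimensions due to convergence issues between the dagger and analytic completions, which is why the argument restricts to smooth curves; I expect this to require the most delicate bookkeeping.

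Finally, $(\mathrm{iv}) \cong (\mathrm{v})$ is essentially a tautology: the Monsky--Washnitzer cohomology of~$X$ is by construction the de Rham cohomology of a weakly complete (dagger) lift of $R/\dvgen R$, and $R^\updagger \otimes \dvf$ is such a lift. The isomorphism with rigid cohomology $H_{\mathrm{rig}}^*(A, \dvf)$ when $\car(\resf) \neq 0$ is Berthelot's classical comparison theorem for smooth affine varieties, which is rederived in the bornological dagger-algebra framework in~\cite{Cortinas-Cuntz-Meyer-Tamme:Nonarchimedean} and can be invoked directly here.
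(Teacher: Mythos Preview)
Your proposal is correct and follows the same route as the paper. The paper's proof is terser: it cites Theorem~\ref{the:HA_through_lift} for $(\mathrm{i})\cong(\mathrm{ii})$, Corollary~\ref{cor:Monsky-Washnitzer_lift} for $(\mathrm{ii})\cong(\mathrm{iii})$, and then dispatches $(\mathrm{iii})\cong(\mathrm{iv})\cong(\mathrm{v})$ together with the rigid cohomology identification in one stroke by invoking \cite{Cortinas-Meyer-Mukherjee:NAHA}*{Theorem~9.2.9}, whereas you correctly unpack the content of that cited result (the $\HA\simeq\HP$ comparison in relative dimension~$1$, the HKR-type identification with de Rham, and Berthelot's comparison).
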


\begin{proof}
  The first two homology groups are isomorphic by
  Theorem~\ref{the:HA_through_lift}, the next two by
  Corollary~\ref{cor:Monsky-Washnitzer_lift}.  The remaining ones are
  isomorphic by
  \cite{Cortinas-Meyer-Mukherjee:NAHA}*{Theorem~9.2.9}.
\end{proof}

\begin{definition}
  A projective system of bornological \(\dvr\)\nb-modules
  \(W=(W_n)_{n\in\N}\) is called \emph{fine mod~\(\dvgen\)} if the
  quotient bornology on \(W_n/\dvgen W_n\) is the fine one for all
  \(n\in\N\).  We also apply this to bornological
  \(\dvr\)\nb-modules, viewed as constant projective systems.
\end{definition}

\begin{theorem}
  \label{the:HA_through_pro-lift}
  Let~\(A\) be an \(\resf\)\nb-algebra, viewed as a constant
  projective system of bornological \(\dvr\)\nb-algebras with the fine
  bornology.  Let~\(W\) be a
  projective system of torsionfree complete bornological
  \(\dvr\)\nb-modules that is fine mod~\(\dvgen\).
  Let \(\varrho \colon W \onto A\) be an epimorphism in the category
  of projective systems of bornological \(\dvr\)\nb-modules.  There
  is a quasi-isomorphism
  \(\HAC(A) \simeq \HAC(A, W, \varrho)\).
\end{theorem}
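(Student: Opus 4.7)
The plan is to reduce Theorem~\ref{the:HA_through_pro-lift} to its constant case, Theorem~\ref{the:HA_through_lift}. After reindexing the projective system $W$, we may assume that $\varrho$ is represented by a compatible family of surjective bounded $\dvr$-linear maps $\varrho_n \colon W_n \onto A$ for every $n \in \N$. Since $W$ is fine mod~$\dvgen$ and each $W_n$ is complete and torsionfree, the pair $(W_n, \varrho_n)$ satisfies the hypotheses of Theorem~\ref{the:HA_through_lift}. Hence, for any section $s_n \colon A \to W_n$ of $\varrho_n$, Lemma~\ref{lem:compare_HAC_A_to_W} produces a chain map $f_n \colon \HAC(A) \to \HAC(A, W_n, \varrho_n)$ that is a quasi-isomorphism.

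Next, I would show that every structure map of the projective system $\HAC(A, W, \varrho)$ is itself a quasi-isomorphism. Let $\pi_n \colon W_{n+1} \to W_n$ be the structure maps of $W$ and write $\phi_n$ for the induced structure maps of $\HAC(A, W, \varrho)$. For any section $s_{n+1}$ of $\varrho_{n+1}$, the composition $\pi_n \circ s_{n+1}$ is itself a section of $\varrho_n$, and Lemma~\ref{lem:hybrid_functorial} gives an equality $\phi_n \circ f_{n+1} = f'_n$ of chain maps $\HAC(A) \to \HAC(A, W_n, \varrho_n)$, where $f'_n$ is the chain map induced by $\pi_n \circ s_{n+1}$. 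Since both $f_{n+1}$ and $f'_n$ are quasi-isomorphisms by Theorem~\ref{the:HA_through_lift}, two-out-of-three forces $\phi_n$ to be a quasi-isomorphism.

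The final step is to assemble these levelwise quasi-isomorphisms into an equivalence in $\mathsf{Der}(\overleftarrow{\mathsf{Ind}(\mathsf{Ban}_\dvf)})$. Since all structure maps of $\HAC(A, W, \varrho)$ are quasi-isomorphisms, the projective system is equivalent, in the derived category, to the constant projective system at $\HAC(A, W_0, \varrho_0)$. Combined with the quasi-isomorphism $f_0 \colon \HAC(A) \to \HAC(A, W_0, \varrho_0)$, this yields the desired equivalence $\HAC(A) \simeq \HAC(A, W, \varrho)$. The main obstacle is producing an \emph{actual} morphism of projective systems realizing this equivalence: the sections $s_n$ of $\varrho_n$ need not be compatible with the structure maps $\pi_n$, so they do not automatically assemble into a morphism $\dvr[A] \to W$ of projective systems. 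One route is to pick coherent sections inductively after passing to a cofinal subsystem of $W$ where the structure maps become surjective on fibers of $\varrho$, so that a Mittag--Leffler-type argument yields $s_n$ with $\pi_n \circ s_{n+1} = s_n$ for all $n$. Alternatively, Proposition~\ref{pro:homotopy_lifting_2} shows that any two sections of $\varrho_n$ differ by an elementary polynomial homotopy, so the $f_n$ form a homotopy-coherent family that can be strictified into a morphism in the derived category.
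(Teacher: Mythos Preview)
Your proposal is essentially correct and follows the same approach as the paper: reduce to Theorem~\ref{the:HA_through_lift} at each level, show the structure maps of the projective system $\HAC(A,W,\varrho)$ are quasi-isomorphisms, and conclude that the projection to level~$0$ is a quasi-isomorphism.  Your two-out-of-three argument for the structure maps (observing that $\pi_n\circ s_{n+1}$ is again a section of~$\varrho_n$, so Theorem~\ref{the:HA_through_lift} applies to it directly, and then using functoriality from Lemma~\ref{lem:hybrid_functorial}) is in fact slightly slicker than the paper's version, which invokes Proposition~\ref{pro:homotopy_lifting_2} to show that the chain maps induced by~$s_n$ and $\pi_n\circ s_{n+1}$ are chain homotopic before applying two-out-of-three.

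Your final paragraph, however, worries about a non-issue.  The theorem asserts only $\HAC(A)\simeq\HAC(A,W,\varrho)$, an isomorphism in the derived category; this is exactly what you have already produced via the zigzag
\[
  \HAC(A)\xrightarrow{\ f_0\ }\HAC(A,W_0,\varrho_0)
  \xleftarrow{\ \mathrm{can}\ }\HAC(A,W,\varrho),
\]
both legs of which you have shown to be quasi-isomorphisms.  The paper proceeds in precisely this way and does not attempt to build a single morphism of projective systems $\HAC(A)\to\HAC(A,W,\varrho)$.  Neither the Mittag--Leffler argument for coherent sections nor the homotopy-coherent strictification is needed; you can simply drop that paragraph.
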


\begin{proof}
  Write \(W\) as a projective system with entries \(W_n\) and maps
  \(\varphi_n^{n+1}\colon W_{n+1} \to W_n\) for \(n\in \N\).  The
  morphism~\(\varrho\) is represented by a bounded
  \(\dvr\)\nb-module homomorphism
  \(\varrho_{n_0}\colon W_{n_0} \to A\) for some \(n_0\in\N\).
  Reindexing, we may arrange without loss of generality that
  \(n_0=0\).  Let
  \(\varrho_n \defeq \varrho_0 \circ \varphi_0^n\colon W_n \to W_0
  \to A\).  Each map~\(\varrho_n\) is surjective because~\(\varrho\)
  is an epimorphism.  For fixed \(n\in\N\), there is a section
  \(s_n \colon A \to W_n\) for \(\varrho_n \colon W_n \onto A\).  By
  Theorem~\ref{the:HA_through_lift}, \(s_n\) induces a
  quasi-isomorphism \(\HAC(A) \simeq \HAC(A, W_n, \varrho_n)\).  By
  Proposition~\ref{pro:homotopy_lifting_2}, the linear homotopy
  between \(\varphi_n^{n+1}\circ s_{n+1}\) and~\(s_n\) induces a
  chain homotopy between the quasi-isomorphisms
  \(\HAC(A) \rightrightarrows \HAC(A, W_n, \varrho_n)\) induced by
  \(\varphi_n^{n+1}\circ s_{n+1}\) and~\(s_n\).  As a result, the
  maps
  \[
    (\id_A,\varphi_n^{n+1})_*\colon
    \HAC(A,W_{n+1},\varrho_{n+1}) \to \HAC(A,W_n,\varrho_n)
  \]
  are quasi-isomorphisms for all~\(n\).  Since \(\HAC(A,W,\varrho)\)
  is the formal projective limit of the complexes
  \(\HAC(A,W_n,\varrho_n)\), the canonical map
  \(\HAC(A,W,\varrho) \to \HAC(A,W_0,\varrho_0)\) is a
  quasi-isomorphism.  Therefore,
  \(\HAC(A,W,\varrho) \simeq \HAC(A)\).
\end{proof}

\begin{theorem}
  \label{the:pro-dagger_lifting}
  Let~\(A\) be an \(\resf\)\nb-algebra.  Let \(D=(D_n)_{n\in\N}\) be
  a projective system of dagger algebras that is fine mod~\(\dvgen\), and let
  \(\varrho\colon D\to A\) be a pro-homomorphism, represented by a
  coherent family of surjective homomorphisms \(\varrho_n\colon D_n \to A\) for
  \(n\in\N\).  Assume that
  \(\ker \varrho = (\ker \varrho_n)_{n\in\N}\) is analytically
  nilpotent.  Then there is a canonical quasi-isomorphism
  \(\HAC(A) \simeq \diss \HAC(D)\).
\end{theorem}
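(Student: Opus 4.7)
The plan is to simply combine the two main comparison results of this section, Proposition~\ref{Monsky-Washnitzer-general} and Theorem~\ref{the:HA_through_pro-lift}, using the auxiliary invariant $\HAC(A, D, \varrho)$ as a bridge between $\diss \HAC(D)$ and $\HAC(A)$.

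First I would check that the hypotheses of Theorem~\ref{the:HA_through_pro-lift} are satisfied when we take $W \defeq D$. A dagger algebra is by definition a complete bornological $\dvr$\nb-algebra, and its underlying bornological $\dvr$\nb-module is torsionfree; thus the projective system $D=(D_n)_{n\in\N}$ is a projective system of torsionfree complete bornological $\dvr$\nb-modules. The assumption that $D$ is fine mod~$\dvgen$ is part of the hypotheses of the theorem, and the assumption that $\varrho$ is represented by a coherent family of surjective maps $\varrho_n \colon D_n \onto A$ says precisely that $\varrho \colon D \to A$ is an epimorphism of projective systems of bornological $\dvr$\nb-modules. Hence Theorem~\ref{the:HA_through_pro-lift} applies and delivers a canonical quasi-isomorphism
\[
  \HAC(A) \simeq \HAC(A, D, \varrho).
\]

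Next I would invoke Proposition~\ref{Monsky-Washnitzer-general}, whose hypotheses are satisfied verbatim: $D$ is a projective system of dagger algebras, $\varrho \colon D \to A$ is a pro-homomorphism given by surjections, and $\ker\varrho$ is analytically nilpotent by assumption. This produces the isomorphism
\[
  \diss \HAC(D) \cong \HAC(A, D, \varrho).
\]
Combining the two identifications yields the desired canonical quasi-isomorphism $\HAC(A) \simeq \diss \HAC(D)$.

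There is no real obstacle here; the genuinely hard work lies inside the two inputs. Proposition~\ref{Monsky-Washnitzer-general} rests on the universal properties of tube algebras together with the analytic nilpotence of $\ker\varrho$ and of $\tub{\jens D}{(\jens D)^\infty}$, while Theorem~\ref{the:HA_through_pro-lift} is reduced via linear homotopies between set-theoretic sections $s_n \colon A \to D_n$ of $\varrho_n$ to the constant case treated in Theorem~\ref{the:HA_through_lift}, whose proof is the content of Section~\ref{sec:independence_lifting}. Once those are in hand, the present statement is a one-line concatenation.
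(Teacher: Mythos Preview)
Your proposal is correct and follows exactly the paper's own proof: apply Theorem~\ref{the:HA_through_pro-lift} to obtain \(\HAC(A)\simeq\HAC(A,D,\varrho)\) and then Proposition~\ref{Monsky-Washnitzer-general} to identify \(\HAC(A,D,\varrho)\cong\diss\HAC(D)\). Your additional verification of the hypotheses (torsionfreeness of dagger algebras, epimorphism condition) is more explicit than the paper's two-sentence proof but adds nothing new.
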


\begin{proof}
  Theorem~\ref{the:HA_through_pro-lift} gives a canonical
  quasi-isomorphism \(\HAC(A) \simeq \HAC(A, D, \varrho)\).
  Proposition~\ref{Monsky-Washnitzer-general} identifies this
  further with \(\diss \HAC(D)\).
\end{proof}

\begin{example}
  \label{exa:Leavitt_path_algebras}
  Let~\(E\) be a directed graph with countably many vertices.  The
  Leavitt path algebra \(L(R,E)\) and the Cohn path algebra
  \(C(R,E)\) of~\(E\) with coefficients in a commutative ground
  ring~\(R\) are defined in
  \cite{Abrams-Ara-Siles-Molina:Leavitt_path}*{Definitions 1.2.3 and
    1.2.5}.  The chain complexes \(\HAC(C(\dvr,E)^\updagger)\) and
  \(\HAC(L(\dvr,E)^\updagger)\) are computed in
  \cite{Cortinas-Meyer-Mukherjee:NAHA}*{Theorem~8.1} (up to a typo,
  replace~\(\dvr\) by~\(\dvf\)).  Namely, let~\(\dvf^{(E^0)}\)
  denote the free \(\dvf\)\nb-vector space on the set of
  vertices~\(E^0\) and let~\(N_E\) denote the \(\dvf\)\nb-linear map
  \(\dvf^{\mathrm{reg}(E)} \to \dvf^{(E^0)}\) induced by the matrix
  with entries \(\delta_{v,w} - \abs{s^{-1}(w) \cap r^{-1}(v)}\) for
  \(v\in E^0\), \(w\in \mathrm{reg}(E)\), where \(\mathrm{reg}(E)\)
  is the set of regular vertices.  Then
  \[
    \HAC(C(\dvr,E)^\updagger) \cong \dvf^{(E^0)},\qquad
    \HAC(L(\dvr,E)^\updagger) \cong \coker (N_E) \oplus \ker(N_E)[1].
  \]
  It is easy to see that the dagger completed Leavitt and Cohn path
  algebras over~\(\dvr\) are fine mod~\(\dvgen\), and their
  reductions mod~\(\dvgen\) are the Leavitt and Cohn path algebras
  over~\(\resf\), respectively.  So
  Theorem~\ref{the:pro-dagger_lifting} implies quasi-isomorphisms
  \[
    \HAC(C(\dvr,E)^\updagger) \simeq \HAC(C(\resf,E)),\qquad
    \HAC(L(\dvr,E)^\updagger) \simeq \HAC(L(\resf,E)).
  \]
\end{example}

\section{Independence of the choice of lifting}
\label{sec:independence_lifting}

In this section, we prove Theorem~\ref{the:HA_through_lift}.  First,
we recall a theorem about the local structure of torsionfree
complete bornological \(\dvr\)\nb-modules.

\begin{definition}
  \label{def:Cont0}
  Let~\(B\) be a set.  Let \(\Cont_0(B,\dvr)\) be the set of all
  functions \(f\colon B \to \dvr\) such that for each \(\delta>0\)
  there is a finite subset \(F\subseteq B\) with
  \(\abs{f(x)}<\delta\) for all \(x\in B\setminus F\).  Define
  \(\Cont_0(B,\dvf)\) similarly.  Equip both with the supremum norm.
  This makes \(\Cont_0(B, \dvf)\) a Banach \(\dvf\)\nb-vector space
  and \(\Cont_0(B, \dvr)\) its unit ball.
\end{definition}

\begin{theorem}[\cite{Cortinas-Meyer-Mukherjee:NAHA}*{Theorem~2.4.2}]
  \label{the:structure}
  Let~\(W\) be a torsionfree complete bornological
  \(\dvr\)\nb-\hspace{0pt}module.  Any \(\dvgen\)\nb-adically
  complete bounded \(\dvr\)\nb-submodule~\(M\) of~\(W\) is
  isomorphic to \(\Cont_0(B,\dvr)\) for some set~\(B\).
\end{theorem}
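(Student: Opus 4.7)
The plan is to reduce the statement to a structure theorem for $\dvgen$-adically complete, torsionfree $\dvr$-modules, in the style of Serre's ``every complete valuation module is an $\ell^\infty$-space'' phenomenon in non-archimedean functional analysis. Since $W$ is bornologically torsionfree, multiplication by $\dvgen$ on $W$ is a bornological embedding, and so its restriction to the bounded $\dvr$-submodule $M$ is injective; hence $M$ itself is a $\dvgen$-adically complete, torsionfree $\dvr$-module in the ordinary algebraic sense.

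First I would look at the reduction $\bar M \defeq M/\dvgen M$, which is an $\resf$-vector space, and pick an $\resf$-basis $(\bar e_b)_{b \in B}$. Lift each $\bar e_b$ to an element $e_b \in M$. My candidate isomorphism is the $\dvr$-linear map
\[
  \varphi\colon \Cont_0(B,\dvr) \to M,\qquad
  f \mapsto \sum_{b\in B} f(b)\, e_b,
\]
the sum converging $\dvgen$-adically in $M$ because for every $n$ only finitely many $f(b)$ lie outside $\dvgen^n \dvr$ and $M$ is $\dvgen$-adically complete. The map $\varphi$ reduces mod $\dvgen$ to the isomorphism $\resf^{(B)} \iso \bar M$ given by the chosen basis.

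Next I would establish that $\varphi$ is bijective by a standard successive-approximation argument. Injectivity follows by looking at the first index $n$ with $f(b) \notin \dvgen^{n+1}\dvr$ for some $b$; factoring out $\dvgen^n$ and reducing mod $\dvgen$ contradicts $\resf$-linear independence of the $\bar e_b$. Surjectivity is by iteration: given $x \in M$, write its class in $\bar M$ as a finite combination of the $\bar e_b$, subtract the corresponding $\dvr$-linear combination, divide the remainder by $\dvgen$, and repeat; the $\dvgen$-adic completeness of $M$ assembles this into a preimage $f \in \Cont_0(B,\dvr)$, the $\Cont_0$ condition being automatic from the fact that each stage uses only finitely many basis vectors.

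The main obstacle will be the bornological statement, namely that $\varphi$ is a bornological \emph{isomorphism} and not just an algebraic one compatible with the supremum-norm unit ball. Boundedness of $\varphi$ is immediate because the image of the unit ball $\Cont_0(B,\dvr)$ lands in the bounded submodule $M$. For the inverse, the key point is that $M$, equipped with its subspace bornology from $W$, has its gauge seminorm dominating the supremum seminorm relative to the basis $(e_b)$: by construction each coefficient functional $f \mapsto f(b)$ is bounded on $M$ because it agrees up to $\dvgen$ with projection onto a free summand modulo $\dvgen$, and the torsionfreeness of $W$ lets one bootstrap this to a uniform bound. Once one has that the coordinate functionals are uniformly bounded, $\varphi^{-1}$ is bounded, and the theorem follows by appealing (as the excerpt does) to \cite{Cortinas-Meyer-Mukherjee:NAHA}*{Theorem~2.4.2} for the technical verification.
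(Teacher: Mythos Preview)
The paper does not prove this theorem at all: it is simply quoted, with the citation to \cite{Cortinas-Meyer-Mukherjee:NAHA}*{Theorem~2.4.2} serving as the entire justification. So there is no ``paper's own proof'' to compare your proposal against.

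Your argument for the $\dvr$-module isomorphism is the standard one and is correct: since $W$ is bornologically torsionfree, $M$ is a torsionfree, $\dvgen$-adically complete $\dvr$-module; lifting an $\resf$-basis of $M/\dvgen M$ and running successive approximation gives the bijection $\Cont_0(B,\dvr)\cong M$. This is exactly the classical fact that a flat, $\dvgen$-adically complete $\dvr$-module is orthonormalisable.

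Your final paragraph, however, is both unnecessary and circular. There is no separate bornological content to verify: the statement is an isomorphism of $\dvr$-modules (equivalently of $\dvgen$-adically complete modules, equivalently an isometry for the gauge norms), and any $\dvr$-linear bijection between $\Cont_0(B,\dvr)$ and $M$ is automatically such an isomorphism because both sides are torsionfree and $\dvgen$-adically complete. The only role of the ambient bornological module $W$ is to guarantee that $M$ is torsionfree. Your closing appeal to \cite{Cortinas-Meyer-Mukherjee:NAHA}*{Theorem~2.4.2} ``for the technical verification'' is a reference to the very theorem you are proving, so it cannot be invoked. Simply drop the last paragraph and your argument is complete.
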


Since \(\Cont_0(B, \dvr)\) has a nice basis, we may define bounded
\(\dvr\)\nb-linear maps on it rather easily.  We will feed these
maps into the functoriality and homotopy invariance results proven
in Section~\ref{sec:homotopy_stability}.  This uses the
following proposition:

\begin{proposition}
  \label{pro:HAC_localisation}
  Let~\(A\) be an \(\resf\)\nb-algebra, let~\(W\) be a torsionfree
  complete bornological \(\dvr\)\nb-module and let
  \(\varrho\colon W\to A\) be a \(\dvr\)\nb-linear map.  Let
  \(I\defeq \ker (\varrho^\#\colon \tens W \to A)\) and let
  \(l\in\N^*\).  Assume that~\(W\) is fine mod~\(\dvgen\).  Then
  \(\tub{\tens W}{I^l}\) is the strict inductive
  limit of \(\tub{\tens M}{(\tens M\cap I)^l}\) for the
  \(\dvgen\)\nb-adically complete bounded \(\dvr\)\nb-submodules
  \(M\subseteq W\); strict means that the structure maps
  \[
    \tub{\tens M}{(\tens M\cap I)^l} \to
    \tub{\tens M'}{(\tens M'\cap I)^l}
  \]
  for \(M\subseteq M'\) are all injective.
\end{proposition}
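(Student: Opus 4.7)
The verification comprises three parts: strictness (injectivity of the structure maps), exhaustion (every element of $\tub{\tens W}{I^l}$ lies at some level), and matching of the bornologies. The first part is immediate: for $M \subseteq M'$ both $\dvgen$-adically complete and bounded, the inclusion $\tens M \hookrightarrow \tens M'$ is a bornological embedding between torsionfree complete modules, and it remains one after tensoring with~$\dvf$ by Lemma~\ref{lem:tensor-exact}. The tube algebras $\tub{\tens M}{(\tens M \cap I)^l}$ and $\tub{\tens M'}{(\tens M' \cap I)^l}$ are bornological submodules of $\tens M \otimes \dvf$ and $\tens M' \otimes \dvf$ respectively, so the induced map between them is an embedding, in particular injective.

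For exhaustion, fix $s = \sum_{j=0}^N \dvgen^{-j} x_j \in \tub{\tens W}{I^l}$ with $x_j \in I^{lj}$. Each $x_j$ is a finite $\dvr$-linear combination of products of $lj$ elements of $I$, and each such element of $I$ is itself a finite combination of simple tensors in $\tens W$. In total only finitely many elements of $W$ appear across all the $x_j$'s, and the $\dvr$-submodule $M_0$ they generate is finitely generated, hence bounded. By completeness of $W$ we may choose a $\dvgen$-adically complete bounded $\dvr$-submodule $M \subseteq W$ with $M_0 \subseteq M$. Each factor in each product expansion of $x_j$ then lies in $\tens M \cap I$, so $x_j \in (\tens M \cap I)^{lj}$ and $s \in \tub{\tens M}{(\tens M \cap I)^l}$.

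The main obstacle is the bornological matching. The easy direction, that a subset bounded in some $\tub{\tens M}{(\tens M \cap I)^l}$ is bounded in $\tub{\tens W}{I^l}$, follows from boundedness of the inclusions $\tens M \otimes \dvf \hookrightarrow \tens W \otimes \dvf$. The reverse direction is the heart of the proof: given $S \subseteq \tub{\tens W}{I^l}$ bounded in the subspace bornology from $\tens W \otimes \dvf$, one has $\dvgen^N S \subseteq \sum_{n \le a} M_0^{\otimes n}$ for some $\dvgen$-adically complete bounded $M_0 \subseteq W$, and hence $S \subseteq \tens M_0 \otimes \dvf$. One must then exhibit a fixed $\dvgen$-adically complete bounded $M \supseteq M_0$ and uniformly bounded decompositions $s = \sum_j \dvgen^{-j} z_j$ with $z_j \in (\tens M \cap I)^{lj}$ for all $s \in S$ simultaneously. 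This is where the hypothesis that $W$ is fine mod~$\dvgen$ plays an essential role: the image of $M_0$ in $W/\dvgen W$ is then finitely generated over $\resf$, which allows a controlled $\dvgen$-adic approximation argument. Reducing the decompositions mod $\dvgen$, enlarging $M_0$ just enough to capture the finite data living in $W/\dvgen W$, and then lifting by $\dvgen$-adic completeness of $\tens M$ produces the required uniform decomposition of every $s \in S$ inside $\tub{\tens M}{(\tens M \cap I)^l}$.
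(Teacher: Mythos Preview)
Your structural outline (strictness, exhaustion, bornology matching) is correct, and the first two parts are fine, though for strictness the paper argues more simply: injectivity of $\tens M \to \tens W$ survives tensoring with~$\dvf$ just because $\tens W$ is torsionfree, and the tube algebras sit inside $\tens M \otimes \dvf$ and $\tens W \otimes \dvf$ as subsets. Invoking Lemma~\ref{lem:tensor-exact} (which concerns completed tensor products~$\hot$) is off target here.

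The third part, however, has a genuine gap. Your sketch says the finiteness of the image of $M_0$ in $W/\dvgen W$ allows a ``controlled $\dvgen$-adic approximation'' followed by ``lifting by $\dvgen$-adic completeness of $\tens M$.'' Two problems. First, $\tens M = \bigoplus_n M^{\otimes n}$ is not $\dvgen$-adically complete, so there is no such lifting step available. Second, and more importantly, knowing that $M_0$ has finite image in $W/\dvgen W$ does not by itself control how deep the tube expansions $s = \sum_j \dvgen^{-j} z_j$ go, nor which elements of~$I$ appear, uniformly over~$S$. The paper's mechanism is different and more specific: one first shows (via auxiliary lemmas) that fine mod~$\dvgen$ propagates to fine mod~$\dvgen^m$ for all~$m$, and that this is inherited by $\tens W$. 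Then, writing $N' = \dvgen^b N \subseteq \tens W$, the image of $N'$ in $\tens W/\dvgen^b \tens W$ is finitely generated; one chooses finitely many $x_1,\dotsc,x_n \in N'$ generating it. Since each $\dvgen^{-b} x_i$ lies in the tube algebra, it has an explicit finite expansion, and the finitely many elements of~$I$ occurring across all these expansions form a finite set $S \subseteq I$. Every $n \in N$ then decomposes as a $\dvr$-combination of the $\dvgen^{-b} x_i$ plus a remainder in $\tens W$, and bornological torsionfreeness of $\tens W$ (not completeness) bounds that remainder. This is Lemma~\ref{lem:bounded_in_tensor_tube_technical1}, and it is the step your sketch is missing.
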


Recall that \(\HAC(A,W,\varrho)\) is the projective
system of complexes
\begin{equation}
  \label{eq:HAC_l}
  \HAC(A,W,\varrho,l)
  \defeq \comb{\diss X( \ling{\tub{\tens W}{I^l}} \otimes \dvf)}
\end{equation}
for \(l \in \N^*\).  The proposition implies that
\(\HAC(A,W,\varrho,l) = \varinjlim \HAC(A,M,\varrho|_M,l)\)
for~\(M\) running through the directed set of \(\dvgen\)\nb-adically
complete bounded \(\dvr\)\nb-submodules \(M\subseteq W\); this is
because taking linear growth bornologies, tensoring with~\(\dvf\),
taking the \(X\)\nb-complex, dissection and completion commute with
inductive limits, being ``local'' constructions.  Since inductive
limits in the category of projective systems are not levelwise,
\(\HAC(A,W,\varrho)\) itself is \emph{not} an inductive limit of
\(\HAC(A,M,\varrho|_M)\).

The proof of the proposition uses a few lemmas.

\begin{lemma}
  \label{lem:fine_quotients}
  Let~\(W\) be a torsionfree bornological
  \(\dvr\)\nb-module.  If the quotient bornology on
  \(W/\dvgen W\) is fine, then the same is true for
  \(W/\dvgen^m W\) for all \(m\in\N\).
\end{lemma}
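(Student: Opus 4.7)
The plan is to proceed by induction on $m$, with the case $m = 1$ given by hypothesis and $m = 0$ trivial. Assume the statement for $m$ and set $V \defeq W/\dvgen^{m+1}W$ with the quotient bornology and $V_m \defeq \dvgen^m W/\dvgen^{m+1} W \subseteq V$. I would use the short exact sequence $V_m \injto V \onto V/V_m$, first identifying $V/V_m \cong W/\dvgen^m W$ bornologically (quotient bornologies compose, so both arise as the quotient bornology from $W$), whence $V/V_m$ is fine by the inductive hypothesis.

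For $V_m$ itself: since $W$ is bornologically torsionfree, multiplication by $\dvgen^m$ is a bornological embedding, hence a bornological isomorphism $W \iso \dvgen^m W$ (with subspace bornology from $W$) carrying $\dvgen W$ to $\dvgen^{m+1} W$. Passing to quotients yields $W/\dvgen W \cong V_m$ as bornological modules, so $V_m$ with its quotient bornology from $\dvgen^m W$ is fine by the base case.

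The delicate intermediate step is to show that this quotient bornology on $V_m$ coincides with the subspace bornology it inherits from $V$. The nontrivial inclusion: if $S \subseteq V_m$ is bounded in $V$, so $S = \pi(B)$ for some bounded $B \subseteq W$, then for each $s \in S$ pick a preimage $b_s \in B$; since $\pi(b_s) \in V_m$, we get $b_s \in \dvgen^m W + \dvgen^{m+1} W = \dvgen^m W$, and $\{b_s : s \in S\} \subseteq B$ is a bounded subset of $\dvgen^m W$ mapping onto $S$. I expect this comparison to be the main obstacle, since subspace and quotient bornologies for sub-quotients can in general differ.

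With these pieces in place, the proof concludes by a standard lifting argument. Given a bounded $S \subseteq V$, its image in $V/V_m$ lies in a finitely generated submodule $\dvr \bar v_1 + \dotsb + \dvr \bar v_k$; lift to $v_1, \dotsc, v_k \in V$ and, for each $s \in S$, write $s = \sum_i \lambda_{i,s} v_i + r_s$ with $\lambda_{i,s} \in \dvr$ and $r_s \in V_m$. The sums $\sum_i \lambda_{i,s} v_i$ all lie in the bounded submodule $\dvr v_1 + \dotsb + \dvr v_k$, so $\{r_s : s \in S\}$ is a subset of a sum of two bounded sets in $V$, hence bounded in $V$; by the agreement of bornologies on $V_m$, it is bounded in $V_m$ with its fine bornology, hence contained in a finitely generated submodule of $V_m$. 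Therefore $S$ is contained in the sum of two finitely generated submodules of $V$, which is itself finitely generated.
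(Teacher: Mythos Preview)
Your proof is correct. Both you and the paper argue by induction and use bornological torsionfreeness to divide by a power of~\(\dvgen\), but you organise the induction differently. The paper works entirely in~\(W\): given bounded \(S\subseteq W\), it first applies the base case \(m=1\) to write \(S\subseteq \sum \dvr x_j + \dvgen W\), sets \(S_1 = (S+\sum \dvr x_j)\cap \dvgen W\), uses torsionfreeness to make \(\dvgen^{-1}S_1\) bounded, and then applies the inductive hypothesis to~\(\dvgen^{-1}S_1\). In effect the paper uses the extension \(\dvgen W/\dvgen^{m+1}W \into W/\dvgen^{m+1}W \onto W/\dvgen W\), peeling off the base case on the quotient side. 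You instead use the extension \(\dvgen^m W/\dvgen^{m+1}W \into W/\dvgen^{m+1}W \onto W/\dvgen^m W\), peeling off the inductive hypothesis on the quotient side and the base case on the kernel. Your route is a bit more conceptual but costs you the extra step of comparing the subspace and quotient bornologies on~\(V_m\); the paper sidesteps that comparison by never leaving~\(W\) until the very end.
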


\begin{proof}
  The proof is by induction on~\(m\).  The case \(m=0\) is trivial.
  Assume that the claim is true for~\(m\).  Let \(S\subseteq W\) be
  a bounded \(\dvr\)\nb-module.  We must show that its image in
  \(W/\dvgen^{m+1} W\) is finitely generated.  By assumption, its
  image in \(W/\dvgen W\) is finitely generated.  So we may pick a
  finite set \(x_1,\dotsc,x_n\) with
  \(S \subseteq \sum_{j=1}^n \dvr x_j + \dvgen W\).  Let
  \(S_1 \defeq \Bigl(S + \sum_{j=1}^n \dvr x_j\Bigr) \cap \dvgen
  W\).  Then \(S_1\subseteq \dvgen W\) is bounded and
  \(S\subseteq \sum_{j=1}^n \dvr x_j + S_1\).  Since~\(W\) is
  bornologically torsionfree, \(\dvgen^{-1} S_1\) is bounded as
  well.  And
  \(S \subseteq \sum_{j=1}^n \dvr x_j + \dvgen\cdot
  (\dvgen^{-1}S_1)\).  The induction assumption applied to the image
  of~\(\dvgen^{-1} S_1\) in~\(W/\dvgen^m W\) gives finitely many
  elements \(x_{n+1},\dotsc,x_k\) such that
  \(\dvgen^{-1} S_1 \subseteq \sum_{j=n+1}^k \dvr x_j + \dvgen^m
  W\).  Then \(S \subseteq \sum_{j=1}^k \dvr x_j + \dvgen^{m+1} W\)
  as desired.
\end{proof}

\begin{lemma}
  \label{lem:fine_quotient_tensor}
  Let \(W_1,W_2\) be torsionfree bornological \(\dvr\)\nb-modules
  that are fine mod~\(\dvgen\).  Then the same is true for
  \(W_1 \otimes W_2\) and \(\tens W_1\).
\end{lemma}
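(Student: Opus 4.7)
The plan is to split the two claims. Torsionfreeness of $W_1 \otimes W_2$ and $\tens W_1$ is already settled by the stability of bornological torsionfreeness under (bornological) tensor products and direct sums, which is \cite{Meyer-Mukherjee:Bornological_tf}*{Proposition~4.12}. So the only real content is the fine-mod-$\dvgen$ assertion, which I would handle directly at the level of bounded subsets.

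For $W_1 \otimes W_2$, I would take a bounded $\dvr$-submodule $S \subseteq W_1 \otimes W_2$ and use the definition of the bornology on the tensor product to reduce to the case $S \subseteq S_1 \otimes S_2$ for bounded $\dvr$-submodules $S_i \subseteq W_i$. The hypothesis that each $W_i$ is fine mod~$\dvgen$ then supplies finite subsets $X_i \subseteq W_i$ with $S_i \subseteq \dvr\langle X_i\rangle + \dvgen W_i$. Expanding an elementary tensor of elements of these two decompositions and observing that every cross-term carries a factor of $\dvgen$, I expect to obtain
\[
  S_1 \otimes S_2 \;\subseteq\; \dvr\langle X_1 \otimes X_2\rangle + \dvgen(W_1 \otimes W_2),
\]
so the image of $S$ in $(W_1 \otimes W_2)/\dvgen(W_1 \otimes W_2)$ lies in the finitely generated $\dvr$-submodule spanned by $X_1 \otimes X_2$. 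This shows the quotient bornology agrees with the fine bornology.

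For $\tens W_1 = \bigoplus_{n\geq 1} W_1^{\otimes n}$, I would iterate the tensor product case to conclude by induction on~$n$ that each $W_1^{\otimes n}$ is fine mod $\dvgen$. Any bounded subset of $\tens W_1$ is contained in $\sum_{n=1}^{N} B^{\otimes n}$ for some bounded $B \subseteq W_1$ and some $N\in\N$, and since $\dvgen$ acts diagonally, the quotient $(\tens W_1)/\dvgen(\tens W_1)$ decomposes as $\bigoplus_{n\geq 1} W_1^{\otimes n}/\dvgen W_1^{\otimes n}$. The image of the bounded subset therefore lies in a finite sum of finitely generated submodules, hence is itself finitely generated.

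The main obstacle I anticipate is purely bookkeeping for the tensor product step: one has to verify rigorously that mixed terms arising from the decomposition $S_i \subseteq \dvr\langle X_i\rangle + \dvgen W_i$ really land in $\dvgen(W_1 \otimes W_2)$ after tensoring. This reduces to bilinearity together with the fact that $\dvgen$ can be pulled across either slot of the tensor product, so no serious difficulty is expected; the argument runs parallel to the induction used in the proof of Lemma~\ref{lem:fine_quotients}.
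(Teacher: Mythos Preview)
Your proposal is correct and follows essentially the same route as the paper's proof: reduce a bounded subset of $W_1\otimes W_2$ to an image of $S_1\otimes S_2$ with $S_j$ bounded, use the fine-mod-$\dvgen$ hypothesis to write $S_j\subseteq \dvr\langle X_j\rangle+\dvgen W_j$, observe that all cross terms land in $\dvgen(W_1\otimes W_2)$, then induct to handle $W_1^{\otimes n}$ and use that bounded subsets of $\tens W_1$ sit in a finite subsum. The only cosmetic difference is that you record the torsionfreeness claim explicitly via \cite{Meyer-Mukherjee:Bornological_tf}*{Proposition~4.12}, whereas the paper takes this as already known from the discussion in Step~\ref{step:tensor_algebra}.
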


\begin{proof}
  Any bounded subset of \(W_1 \otimes W_2\) is contained in the
  image of \(M_1 \otimes M_2\) for bounded \(\dvr\)\nb-submodules
  \(M_j \subseteq W_j\) for \(j=1,2\).  By assumption, for \(j=1,2\)
  there are finite subsets \(S_j \subseteq W_j\) with
  \(M_j \subseteq \sum_{x\in S_j} \dvr\cdot x + \dvgen W_j\).  Hence the
  image of \(M_1 \otimes M_2\) in \(W_1 \otimes W_2\) is contained
  in
  \(\sum_{x\in S_1,y\in S_2} \dvr\cdot (x\otimes y) + \dvgen\cdot
  W_1 \otimes W_2\).  Then \(W_1 \otimes W_2\) is fine
  mod~\(\dvgen\).  By induction, it follows that \(W_1^{\otimes n}\)
  is fine mod~\(\dvgen\) for all \(n\in\N\).  This is further
  inherited by the direct sum
  \(\tens W_1 = \bigoplus W_1^{\otimes n}\) because any bounded
  subset of \(\tens W_1\) is already contained in a finite subsum.
\end{proof}

\begin{lemma}
  \label{lem:bounded_in_tensor_tube_technical1}
  In the situation of
  Proposition~\textup{\ref{pro:HAC_localisation}}, let~\(N\) be a
  bounded \(\dvr\)\nb-submodule of \(\tub{\tens W}{I^l}\).  There
  are \(a\in \N\), a finite subset \(S\subseteq I\), and a bounded
  subset \(B\subseteq \tens W\) such that~\(N\) is contained in
  \(\sum_{j=1}^a \dvr \dvgen^{-j} S^{l j} + B\).
\end{lemma}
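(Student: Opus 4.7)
The plan is to exploit the subspace bornology on $\tub{\tens W}{I^l}$ together with the fine mod~$\dvgen$ hypothesis to reduce~$N$ to a finitely generated $\dvr$-submodule of the tube algebra, modulo a bounded subset of~$\tens W$.  Since $\tub{\tens W}{I^l}$ carries the subspace bornology from $\tens W\otimes\dvf$, I first choose $m\in\N$ and a bounded $\dvr$-submodule $B_0\subseteq \tens W$ with $N\subseteq \dvgen^{-m} B_0$, and take $B_0\subseteq \sum_{n=1}^{a'} W_0^{\otimes n}$ for a bounded $W_0\subseteq W$ and some $a'$.

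By Lemma~\ref{lem:fine_quotient_tensor}, $\tens W$ is fine mod~$\dvgen$, and iterating via Lemma~\ref{lem:fine_quotients} shows that it is fine mod~$\dvgen^{lm}$.  Hence the image of~$B_0$ in $\tens W/\dvgen^{lm}\tens W$ is finitely generated, which lifts---using bornological torsionfreeness to force the remainder to be bounded---to an inclusion $B_0 \subseteq \sum_{i=1}^r \dvr\cdot \tilde w_i + \dvgen^{lm} B_0'$ with finitely many $\tilde w_i\in \tens W$ and a bounded $B_0'\subseteq \tens W$.  Multiplying by~$\dvgen^{-m}$ yields $N\subseteq FG + \dvgen^{(l-1)m} B_0'$, where $FG\defeq \sum_i \dvr\cdot \dvgen^{-m}\tilde w_i$; the second summand lies in~$\tens W$ and is bounded because $(l-1)m\ge 0$.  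For any $x\in N$, writing $x=g+b'$ with $g\in FG$ and $b'\in \dvgen^{(l-1)m} B_0'\subseteq \tens W\subseteq \tub{\tens W}{I^l}$ shows $g=x-b'\in \tub{\tens W}{I^l}$, so in fact $N \subseteq \bigl(FG\cap \tub{\tens W}{I^l}\bigr) + \dvgen^{(l-1)m} B_0'$.

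Since~$\dvr$ is Noetherian and $FG$ is a finitely generated torsionfree $\dvr$-module, the submodule $FG\cap \tub{\tens W}{I^l}$ is itself finitely generated; pick $\dvr$-generators $g_1,\dotsc,g_k$.  Each~$g_\alpha$ admits a finite decomposition $g_\alpha = z_{\alpha,0} + \sum_{j=1}^{N_\alpha}\dvgen^{-j} z_{\alpha,j}$ with $z_{\alpha,0}\in \tens W$ and each $z_{\alpha,j}$ for $j\ge 1$ a $\dvr$-linear combination of $lj$\nb-fold products of elements of~$I$.  Collecting all factors appearing across these decompositions yields a finite set $S\subseteq I$; setting $a\defeq\max_\alpha N_\alpha$, we obtain $g_\alpha \in z_{\alpha,0} + \sum_{j=1}^a \dvr\dvgen^{-j} S^{lj}$.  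Consequently, $N$ is contained in $\sum_{j=1}^a \dvr\dvgen^{-j} S^{lj} + B$, with $B$ the bounded subset of~$\tens W$ obtained as the sum of the $\dvr$-span of $\{z_{\alpha,0}\}_\alpha$ and $\dvgen^{(l-1)m} B_0'$.

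The main obstacle is the rank-reduction step: bounded subsets of~$\tens W$ need not be finitely generated, so the fine mod~$\dvgen$ assumption has to be pushed up to mod~$\dvgen^{lm}$ and the resulting high $\dvgen$-power remainder absorbed into the bounded $\tens W$-part---possible exactly because $(l-1)m\ge 0$.  Once this reduction succeeds, the remainder of the argument is formal, as any finite family of tube-algebra elements involves only finitely many ideal factors in their $\dvgen^{-j} I^{lj}$-expansions, yielding the finite set~$S$.
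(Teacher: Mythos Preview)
Your proof is correct and follows essentially the same strategy as the paper: reduce $N$ to $\dvgen^{-m}$ times a bounded submodule of $\tens W$, use the fine mod~$\dvgen^k$ property of $\tens W$ (via Lemmas~\ref{lem:fine_quotients} and~\ref{lem:fine_quotient_tensor}) to cut down to finitely many generators, and then read off a finite $S\subseteq I$ from the tube-algebra expansions of those generators. The only differences are bookkeeping---you work modulo $\dvgen^{lm}$ rather than $\dvgen^{m}$, and you obtain generators in the tube algebra by intersecting $FG$ with $\tub{\tens W}{I^l}$ and invoking Noetherianness, whereas the paper selects its generators $x_i$ inside $N'$ so that $\dvgen^{-b} x_i$ already lies in $N\subseteq\tub{\tens W}{I^l}$---but the route is the same.
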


\begin{proof}
  By construction, \(\tub{\tens W}{I^l}\) carries the subspace
  bornology from \(\tens W \otimes \dvf\).  Hence~\(N\) is bounded
  in \(\tens W \otimes \dvf\).  Then it is contained in
  \(\dvgen^{-b} N'\) for a bounded subset \(N'\subseteq \tens W\).
  Lemmas \ref{lem:fine_quotients} and~\ref{lem:fine_quotient_tensor}
  imply that the quotient bornology on \(\tens W/\dvgen^b \tens W\)
  is fine.  Since~\(\dvr\) is Noetherean, there are
  \(x_1,\dotsc,x_n\in N'\subseteq \tub{\tens W}{I^l} =
  \sum_{j=0}^\infty \dvgen^{-j} I^{j l}\) that generate the image
  of~\(N'\) in \(\tens W/\dvgen^b \tens W\) as a \(\dvr\)\nb-module.
  Each \(x_j\in N'\) may be written as a polynomial in
  \(\dvgen^{-1} y_1 \dotsm y_l\) with \(y_1,\dotsc,y_l \in I\), plus
  a term in~\(\tens W\).  Let \(S \subseteq I\) be the (finite) set
  of all factors~\(y_m\) that appear in these products.  Then each
  element of~\(N\) may be written as a finite \(\dvr\)\nb-linear
  combination of elements of \(\bigcup_{j=1}^a \dvgen^{-j} S^{j l}\)
  for some \(a \in \N\), plus an element of~\(\tens W\).  Let
  \(B \defeq \Bigl(N + \sum_{j=1}^a \dvr \dvgen^{-j} S^{j l}\Bigr)
  \cap \tens W\).  This is bounded because~\(\tens W\) is
  bornologically torsionfree.  By construction, \(N\) is contained
  in \(\sum_{j=1}^a \dvr \dvgen^{-j} S^{l j} + B\).
\end{proof}

\begin{proof}[Proof of Proposition~\textup{\ref{pro:HAC_localisation}}]
  Let \(M\subseteq W\) be a bounded \(\dvr\)\nb-submodule.  The
  canonical map \(\tens M \to \tens W\) is injective.  This remains
  so after tensoring with~\(\dvf\) because \(\tens W\) is
  torsionfree.  Then the canonical map
  \(\tub{\tens M}{(\tens M\cap I)^l} \to \tub{\tens W}{I^l}\) is
  injective because both tube algebras are defined as subalgebras of
  \(\tens M \otimes \dvf\) and \(\tens W \otimes \dvf\),
  respectively.  Therefore, the inductive system formed by
  \(\tub{\tens M}{(\tens M\cap I)^l}\) for the
  \(\dvgen\)\nb-adically complete bounded \(\dvr\)\nb-submodules
  \(M\subseteq W\) is strict and the induced bounded map
  \(\varinjlim \tub{\tens M}{(\tens M\cap I)^l} \to \tub{\tens
    W}{I^l}\) is injective.  To show that it is a bornological
  isomorphism, we must prove that any bounded subset
  \(N\subseteq \tub{\tens W}{I^l}\) is the image of a bounded subset
  in \(\tub{\tens M}{(\tens M\cap I)^l}\) for some
  \(\dvgen\)\nb-adically complete bounded \(\dvr\)\nb-submodule
  \(M\subseteq W\).

  Let \(B\) and~\(S\) be as in
  Lemma~\ref{lem:bounded_in_tensor_tube_technical1}.  The subset
  \(B\cup S\) of \(\tens W\) is bounded and hence contained in the
  image of \(\bigoplus_{j=1}^b M^{\otimes j}\) for some
  \(\dvgen\)\nb-adically complete, bounded \(\dvr\)\nb-submodule
  \(M\subseteq W\) and some \(b\in\N\).  By construction of \(B\)
  and~\(S\), the subset~\(N\) is the image of a bounded subset of
  \(\tub{\tens M}{(I\cap \tens M)^l}\).
\end{proof}

As in Theorem~\ref{the:HA_through_lift}, let~\(A\) be an
\(\resf\)\nb-algebra, let~\(W\) be a torsionfree complete
bornological \(\dvr\)\nb-module and let \(\varrho\colon W\to A\) be
a \(\dvr\)\nb-linear map.  We assume that~\(W\) is fine
mod~\(\dvgen\) and that~\(\varrho\) is surjective.  Then there is a
section \(s \colon A \to W\) of~\(\varrho\).  It induces a bounded
\(\dvr\)\nb-linear map \(s^\# \colon \dvr[A] \to W\) that lifts the
identity map on~\(A\).  By Lemma~\ref{lem:hybrid_functorial}, the
pair of maps \((s^\#,\id_A)\) induces a chain map
\[
  (s^\#,\id_A)\colon
  \HAC(A) \defeq \HAC(A,\dvr[A],\id_A^\#) \to \HAC(A,W,\varrho).
\]
We must show that \((s^\#,\id_A)\) is a quasi-isomorphism.

If the identity map on~\(A\) would lift to a bounded
\(\dvr\)\nb-linear map \(f\colon W\to\dvr[A]\), that would give us a
chain map \((f,\id_A)_*\) in the opposite direction.  Since any two
liftings of the same map on~\(A\) are homotopic by a linear
homotopy, it would then follow from
Proposition~\ref{pro:homotopy_lifting_2} that \((s^\#,\id_A)_*\) and
\((f,\id_A)_*\) are inverse to each other up to chain homotopy.  In
particular, it would follow that \((s^\#,\id_A)_*\) is a chain
homotopy equivalence.  This argument is analogous to the proof of
\cite{Cortinas-Meyer-Mukherjee:NAHA}*{Corollary~4.3.12}.
Unfortunately, such a map~\(f\) need not exist.  We only know the
following weaker result:

\begin{lemma}
  \label{lem:local_inverses_W_to_VA}
  Let \(M\subseteq W\) be a \(\dvgen\)\nb-adically complete bounded
  \(\dvr\)\nb-submodule.  There is a bounded \(\dvr\)\nb-linear map
  \(f_M\colon M\to \dvr[A]\) with
  \(\id_A^\# \circ f_M = \varrho|_M\colon M\to A\).
\end{lemma}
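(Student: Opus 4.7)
The plan is to construct $f_M$ by exploiting the ``topologically free'' structure of $M$ provided by Theorem~\ref{the:structure}. Since $M$ is a $\dvgen$\nb-adically complete bounded $\dvr$\nb-submodule of the torsionfree complete bornological $\dvr$\nb-module $W$, Theorem~\ref{the:structure} yields a set $B$ and an isomorphism $M \cong \Cont_0(B,\dvr)$. Let $(e_b)_{b \in B}$ denote the corresponding basis, so every $x \in M$ has a unique expansion $x = \sum_{b \in B} x_b\, e_b$ with $x_b \in \dvr$ tending to~$0$ in the $\dvgen$\nb-adic sense. By the universal property of $\Cont_0(B,\dvr)$, any bounded family $(v_b)_{b \in B}$ valued in a $\dvgen$\nb-adically complete $\dvr$\nb-module $N$ extends uniquely to a bounded $\dvr$\nb-linear map $M \to N$ with $e_b \mapsto v_b$, via $x \mapsto \sum_b x_b v_b$.

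The second ingredient is the fine bornology on $A$. Since $\varrho|_M \colon M \to A$ is bounded and $A$ is fine, $\varrho(M)$ is contained in a finitely generated $\dvr$\nb-submodule of $A$; as $\dvgen \cdot A = 0$, this submodule is the $\resf$\nb-linear span of some finite subset $X = \{a_1,\dotsc,a_n\} \subseteq A$. For each $b \in B$, fix an expression $\varrho(e_b) = \sum_{i=1}^n c_{b,i}\, a_i$ with $c_{b,i} \in \resf$, choose arbitrary lifts $\tilde c_{b,i} \in \dvr$, and set
\[
  v_b \defeq \sum_{i=1}^n \tilde c_{b,i}\, \delta_{a_i} \in \dvr[X] \subseteq \dvr[A],
\]
where $\delta_a \in \dvr[A]$ is the basis element corresponding to $a \in A$.

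All $v_b$ lie in the finitely generated free $\dvr$\nb-module $\dvr[X]$ and form a bounded family there (their coefficients live in the unit ball $\dvr$), so the universal property above produces a bounded $\dvr$\nb-linear map $f_M \colon M \to \dvr[X]$ with $f_M(e_b) = v_b$. Since $\dvr[X]$ is a finitely generated $\dvr$\nb-submodule of $\dvr[A]$, it is bounded in the fine bornology, so $f_M$ is bounded as a map into $\dvr[A]$. The identity $\id_A^\# \circ f_M = \varrho|_M$ is checked on basis vectors: $\id_A^\#(v_b) = \sum_i \tilde c_{b,i}\, a_i = \sum_i c_{b,i}\, a_i = \varrho(e_b)$, using $\dvgen \cdot a_i = 0$, and then extends to all of $M$ by $\dvr$\nb-linearity and $\dvgen$\nb-adic continuity.

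I do not expect a genuine obstacle in this argument. The two guiding observations are that the fine bornology on $A$ forces $\varrho(M)$ into a finite-dimensional $\resf$\nb-subspace, and that $M \cong \Cont_0(B,\dvr)$ reduces the construction of $f_M$ to the choice, per basis vector $e_b$, of finitely many lifts $\tilde c_{b,i}$ of scalars in $\resf$. The lemma is formulated only on individual $\dvgen$\nb-adically complete bounded submodules $M \subseteq W$ precisely because the global analogue --- a bounded $\dvr$\nb-linear lift $W \to \dvr[A]$ of $\varrho$ --- need not exist; this locality is exactly why the proof of Theorem~\ref{the:HA_through_lift} must proceed via the strict inductive limit description of Proposition~\ref{pro:HAC_localisation}.
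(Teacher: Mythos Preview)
Your proof is correct and follows essentially the same approach as the paper's own proof: identify \(M\cong\Cont_0(B,\dvr)\) via Theorem~\ref{the:structure}, observe that \(\varrho(M)\) lies in a finite-dimensional \(\resf\)\nb-subspace spanned by some finite \(X\subseteq A\), lift coordinates from~\(\resf\) to~\(\dvr\) to define \(f_M(e_b)\in\dvr[X]\), and extend by the universal property using \(\dvgen\)\nb-adic completeness of~\(\dvr[X]\). The only cosmetic difference is that the paper justifies finite-dimensionality of \(\varrho(M)\) via the hypothesis that~\(W\) is fine mod~\(\dvgen\) (so the image of~\(M\) in \(W/\dvgen W\) is finitely generated and \(\varrho\) factors through this quotient), whereas you invoke boundedness of~\(\varrho\) together with the fine bornology on~\(A\) directly; both routes are valid in the ambient setup.
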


\begin{proof}
  The image \(\varrho(M) \subseteq A\) has finite dimension
  because~\(W\) is fine mod~\(\dvgen\), so that the image of~\(M\)
  in \(W/\dvgen W\) has finite dimension.  Let
  \(X\defeq \{a_1,\dotsc,a_j\} \subseteq A\) be a basis for
  \(\varrho(M)\).  Identify~\(M\) with \(\Cont_0(B,\dvr)\) by
  Theorem~\ref{the:structure}.  For \(i\in B\), let
  \(\delta_i \in \Cont_0(B,\dvr)\) be the characteristic function
  of~\(\{i\}\).  Write \(\varrho(\delta_i)\) as a linear combination
  of~\(X\) with coefficients in~\(\resf\) and lift these
  coefficients to~\(\dvr\) in any way.  This defines an element of
  \(\dvr [X] \subseteq \dvr[A]\), which we call \(f_M(\delta_i)\).
  Since~\(\dvr[X]\) is \(\dvgen\)\nb-adically complete, there is a
  unique bounded \(\dvr\)\nb-linear map
  \(f_M\colon M \cong \Cont_0(B,\dvr) \to \dvr [X]\) with the
  specified values on all characteristic functions.  Its
  image~\(\dvr[X]\) is bounded in the fine bornology
  on~\(\dvr [A]\).  By construction,
  \(\id_A^\# \circ f_M(\delta_i) = \varrho(\delta_i)\) for all
  \(i\in B\).  This implies \(\id_A^\# \circ f_M = \varrho|_M\).
\end{proof}

By Lemma~\ref{lem:hybrid_functorial}, the pair of maps
\((f_M,\id_A)\) induces a chain map
\[
  (f_M,\id_A)_*\colon
  \HAC(A,M,\varrho|_M) \to \HAC(A,\dvr[A],\id_A^\#) = \HAC(A).
\]
We also need to know that \((f_M,\id_A)_*\) is inverse to
\((s^\#,\id_A)\) up to chain homotopy in some sense.  First,
consider the composite map
\[
  (s^\#,\id_A)\circ (f_M,\id_A)_*
  = (s^\#\circ f_M,\id_A)_*
  \colon \HAC(A,M,\varrho|_M) \to \HAC(A,W,\varrho).
\]
Both \(s^\#\circ f_M\) and the inclusion
\(i_M\colon M\hookrightarrow W\) lift the identity map on~\(A\) to
bounded \(\dvr\)\nb-linear maps \(M\to W\).  The linear homotopy
\[
  H_M \defeq (1-t) i_M + t\cdot (s^\#\circ f_M)\colon
  M \to W\otimes \dvr[t]
\]
lifts the constant homotopy \(c\colon A\to A\otimes \dvr[t]\),
\(a\mapsto a\cdot 1\), on the identity map on~\(A\).  By
Proposition~\ref{pro:homotopy_lifting_2}, \((H_M,c)\) induces a
chain homotopy between \((s^\#,\id_A)\circ (f_M,\id_A)_*\) and the
canonical chain map \(\HAC(A,M,\varrho|_M) \to \HAC(A,W,\varrho)\).
To define a composition in the reverse order, we pick a finite
subset \(X\subseteq A\) with \(s(X) \subseteq M\).  Then we get
an induced bounded \(\dvr\)\nb-linear map
\(s_X^\# = s^\#|_{\dvr[X]}\colon \dvr[X] \to M\), which induces a chain
map
\[
  (s_X^\#,\id_A)_*\colon
  \HAC(A,\dvr[X],\id_A^\#|_{\dvr[X]}) \to \HAC(A,M,\varrho|_M).
\]
Let \(i_X\colon \dvr[X] \hookrightarrow \dvr[A]\) be the inclusion
map.  The constant homotopy on~\(A\) lifts to a linear homotopy
\[
  H_X \defeq
  (1-t)\cdot i_X + t\cdot (f_M \circ s_X^\#)\colon
  \dvr[X] \to \dvr[A].
\]
The pair \((H_X,c)\) induces a chain homotopy between
\((f_M,\id_A)_*\circ (s_X^\#,\id_A)_*\) and the canonical chain map
\(\HAC(A,X,\id_A^\#|_{\dvr[X]}) \to \HAC(A)\) by
Proposition~\ref{pro:homotopy_lifting_2}.

Next, we introduce a concept of ``local chain homotopy equivalence''
with two properties.  First, any local chain homotopy equivalence is
a quasi-isomorphism.  Secondly,
\((s^\#,\id_A)_*\colon \HAC(A) \to \HAC(A,W,\varrho)\) is a local
chain homotopy equivalence.

\begin{definition}
  \label{def:locally_contractible}
  Let \(C=(C_k,\alpha_n^k)_{k\in\N}\) be a chain complex over
  \(\overleftarrow{\mathsf{Ind}(\mathsf{Ban}_\dvf)}\).  We may arrange for
  each~\(C_k\) to be a chain complex and write
  \(C_k \cong (C_{k,i})_{i\in I_k}\) as an inductive system of chain
  complexes.  For each \(n,k\in\N\), \(i\in I_k\), with \(k\ge n\),
  let \(\alpha_{n,i}^k \colon C_{k,i} \to C_n\) be the component of
  the structure map \(\alpha_n^k\colon C_k \to C_n\) of the
  projective system at~\(i\); this is a bounded map
  to~\(C_{n,j}\) for some \(j\in I_n\).  The chain complex~\(C\) is
  called \emph{locally contractible} if, for every~\(n\), there is a
  \(k \geq n\) such that for any \(i \in I_k\), the
  map~\(\alpha_{n,i}^k\) is null-homotopic.  A chain map
  \(f \colon C \to D\) is called a \emph{local chain homotopy
    equivalence} if its mapping cone is locally contractible.
\end{definition}

\begin{lemma}
  \label{lem:locally_contractible_exact}
  Locally contractible chain complexes are exact.
\end{lemma}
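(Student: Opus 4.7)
The plan is to verify the definition of exactness directly: the map $\delta\colon C \to \ker(\delta)$ induced by the differential (using $\delta^2=0$) must be a cokernel in $\overleftarrow{\mathsf{Ind}(\mathsf{Ban}_\dvf)}$. Since $C$ is $\Z/2$-graded with components $d^0\colon C^0 \to C^1$ and $d^1\colon C^1 \to C^0$, exactness splits into the two symmetric claims that $d^1\colon C^1 \to Z \defeq \ker(d^0)$ and $d^0\colon C^0 \to \ker(d^1)$ are strict epimorphisms; I focus on the former. Kernels in $\overleftarrow{\mathsf{Ind}(\mathsf{Ban}_\dvf)}$ are computed levelwise in both the projective and inductive layers, so $Z$ is the projective system of inductive systems $Z_{n,i} \defeq \ker(d^0_{n,i})$, closed Banach subspaces of the $C^0_{n,i}$.

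Next I would unpack the null-homotopies. Fix $n$ and choose $k \geq n$ as supplied by local contractibility. For each $i \in I_k$ there is a degree-shifted morphism $h_{n,i}^k\colon C_{k,i} \to C_n$ with $d\circ h_{n,i}^k + h_{n,i}^k\circ d = \alpha_{n,i}^k$. Restricting both sides to a cycle $z \in Z_{k,i}$ annihilates $h_{n,i}^k(dz)$ and yields $\alpha_{n,i}^k(z) = d^1_n(h_{n,i}^k(z))$. This exhibits the structure map $\alpha_n^k\colon Z_k \to Z_n$ of the projective system of kernels as a composite $Z_k \to C^1_n \xrightarrow{d^1_n} Z_n$ built from the restrictions of the $h_{n,i}^k$ to cycles.

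From this factorization I would conclude that $d^1\colon C^1 \to Z$ is a strict epimorphism in $\overleftarrow{\mathsf{Ind}(\mathsf{Ban}_\dvf)}$ by invoking a Mittag-Leffler-type criterion for countable projective systems in a quasi-Abelian category, in the spirit of \cite{Prosmans:Derived_limits}*{Proposition~7.1.5}: a morphism of countable projective systems is a cokernel provided that for each level $n$ some structure map $Y_k \to Y_n$ factors through the morphism at level $n$. The strictness at the ind-Banach level and then in the pro-category follows from standard quasi-Abelian machinery, combining the open mapping theorem with the fact that $\mathsf{Ind}(\mathsf{Ban}_\dvf)$ itself is quasi-Abelian.

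The main obstacle will be ensuring that the null-homotopies $h_{n,i}^k$, chosen pointwise in $i$ without any a priori coherence, assemble into a genuine morphism of inductive systems $Z_k \to C_n$. I would exploit the flexibility of the ind-category---reindexing and passing to cofinal subsystems as in Lemma~\ref{lem:diss_injlim_2}---to absorb the incoherence, possibly at the cost of enlarging $k$ further. A secondary check is to verify that these levelwise factorizations glue into a morphism of pro-ind objects with the required universal property, but this reduces to a routine diagram chase once the factorization is in place.
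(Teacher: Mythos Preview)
Your overall plan tracks the paper's proof closely: reduce to showing that $d\colon C \to \ker(d)$ is a cokernel in $\overleftarrow{\mathsf{Ind}(\mathsf{Ban}_\dvf)}$, restrict the local null-homotopies to cycles to obtain $d_n \circ h_{n,i}^k|_{\ker d_{k,i}} = \alpha_{n,i}^k|_{\ker d_{k,i}}$, and feed this into a factorisation criterion at the pro-level. The obstacle you flag---that the $h_{n,i}^k$ are chosen independently in~$i$ and need not assemble into a morphism $\ker(d_k) \to C_n$ in $\mathsf{Ind}(\mathsf{Ban}_\dvf)$---is precisely the crux, and your proposed fix does not close it. Reindexing or passing to cofinal subsystems as in Lemma~\ref{lem:diss_injlim_2} controls injectivity of structure maps; it does not manufacture coherence among independently chosen homotopies. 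Enlarging~$k$ does not help either: the new homotopies at level~$k'$ are just as incoherent. So your Mittag--Leffler criterion, which requires an honest morphism $Z_k \to C_n$ in the ind-category, cannot be applied as stated.

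The paper sidesteps the coherence problem with a pullback construction. For $l \ge k$ it forms the pullback
\[
  \begin{tikzcd}
    X_{l,n} \ar[r, "g_{l,n}"] \ar[d, "\gamma_{l,n}"']
    \arrow[dr,phantom, "\ulcorner", very near start] &
    \ker(d_l) \ar[d, "\alpha_n^l"] \\
    C_n \ar[r, "d_n"'] & \ker(d_n)
  \end{tikzcd}
\]
in $\mathsf{Ind}(\mathsf{Ban}_\dvf)$. The pair $(h_{n,i}^l|_{\ker d_{l,i}},\,\mathrm{can}_i)$ now feeds into the universal property of the pullback to give, for each~$i$, a map $\sigma_{l,i}^n\colon \ker(d_l)_i \to X_{l,n}$ with $g_{l,n}\circ \sigma_{l,i}^n = \mathrm{can}_i$. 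No compatibility among the~$\sigma_{l,i}^n$ is required: the existence of such a lift of each $\mathrm{can}_i$ through~$g_{l,n}$ is already enough to force $g_{l,n}\colon X_{l,n} \to \ker(d_l)$ to be a cokernel in $\mathsf{Ind}(\mathsf{Ban}_\dvf)$. The maps $(g_{l,n})$ then form a levelwise-cokernel morphism of projective systems, hence a cokernel in $\overleftarrow{\mathsf{Ind}(\mathsf{Ban}_\dvf)}$. Finally, since the right vertical arrows $(\alpha_n^l)$ represent the identity on the pro-object $\ker(d)$, the pro-system~$X$ is isomorphic to~$C$ via $(\gamma_{l,n})$, and under this identification $(g_{l,n})$ represents $d\colon C \to \ker(d)$. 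This pullback manoeuvre is exactly the missing ingredient in your outline.
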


\begin{proof}
  Let~\(C\) be a locally contractible chain complex.  Write
  \(C \cong (C_k,d_k)_{k \in \N}\) with a compatible family of
  morphisms \(d_k \colon C_k \to C_k\) in
  \(\mathsf{Ind}(\mathsf{Ban}_\dvf)\) with \(d_k^2=0\), as in the
  definition of a locally contractible chain complex.  Then
  \(\ker(d) \cong \ker(d_n)_{n\in \N}\).  We need to prove that the
  morphism of projective systems described by the morphisms
  \(d_n \colon C_n \to \ker(d_n)\) is a cokernel in the
  category \(\overleftarrow{\mathsf{Ind}(\mathsf{Ban}_\dvf)}\).
  Let \((C_{k,i},d_{k,i})\), \(\alpha_n^k\) and \(\alpha_{n,i}^k\) be as in
  Definition~\ref{def:locally_contractible}.  Let \(n\in\N\).
  Since~\(C\) is locally contractible, there is \(k \geq n\) such
  that for each \(i \in I_k\), there is a map
  \(h_{n,i}^k \colon C_{k,i} \to C_n\) with
  \[
    h_{n,i}^k \circ d_{k,i} + d_n \circ h_{n,i}^k = \alpha_{n,i}^k.
  \]
  We replace~\(h_{n,i}^k\) by its restriction to \(\ker d_k\), which
  satisfies \(d_n \circ h_{n,i}^k = \alpha_{n,i}^k\).  Composing
  with the structure maps~\(\alpha_k^l\), we get such maps for all
  \(l\ge k\) and \(i\in I_l\) as well.  For \(l\ge k\), we build a
  pull-back diagram
  \[
    \begin{tikzcd}
      X_{l,n} \ar[r, "g_{l,n}"] \ar[d, "\gamma_{l,n}"']
      \arrow[dr,phantom, "\ulcorner", very near start] &
      \ker(d_l) \ar[d, "\alpha_n^l"] \\
      C_n \ar[r, "d_n"'] &
      \ker(d_n).
    \end{tikzcd}
  \]
  The universal property of pullbacks gives a unique map
  \(\sigma_{l,i}^n \colon \ker(d_l)_i \to X_{l,n}\) with
  \(g_{l,n}\circ \sigma_{l,n}^i = \mathrm{can}_i\colon \ker(d_l)_i
  \to \ker(d_l)\) and
  \(\gamma_{l,n} \circ \sigma_{l,n}^i = h_{n,i}^l\).  Then
  \(g_{l,n} \colon X_{l,n} \to \ker(d_l)\) is a cokernel in
  \(\mathsf{Ind}(\mathsf{Ban}_\dvf)\).  The maps~\((g_{l,n})\)
  combine to a morphism of pro-ind systems.  This morphism is a
  cokernel because each~\(g_{l,n}\) is a cokernel.  Since the family
  of maps \(\ker (d_l) \to \ker (d_n)\) represents the identity map
  of projective systems, \(X\) is isomorphic as a projective system
  to~\(C\), and the maps~\((g_{l,n})\) represent the map
  \(d\colon C\to \ker(d)\).  Consequently,
  \(d \colon C \to \ker(d)\) is a cokernel.
\end{proof}

The following lemma describes local chain homotopy equivalences
directly without referring to the mapping cone:

\begin{lemma}
  \label{lem:explicit_local_homotopy_equivalences}
  Let \(f \colon C \to D\) be a chain map in
  \(\overleftarrow{\mathsf{Ind}(\mathsf{Ban}_\dvf)}\).  We may
  represent~\(f\) by a compatible family
  \((f_n \colon C_n \to D_n)_{n \in \N}\) of chain maps in
  \(\mathsf{Ind}(\mathsf{Ban}_\dvf)\), and each~\(f_n\) by a coherent
  family of chain maps \(f_{n,i}\colon C_{n,i} \to D_{n,i}\)
  in~\(\mathsf{Ban}_\dvf\) for \(i\in I_n\) with some directed
  set~\(I_n\).  Suppose that for each \(n \in \N\), there is an
  \(m \geq n\), such that for each \(i \in I_m\), there are
  morphisms
  \[
    g_{m,i}^n \colon D_{m,i} \to C_n,\qquad
    h_{m,i}^D \colon D_{m,i} \to D_n[1],\qquad
    h_{m,i}^C \colon C_{m,i} \to C_n[1],
  \]
  where~\(g_{m,i}^n\) are chain maps and \(h_{m,i}^D\) and
  \(h_{m,j}^C\) are chain homotopies between \(f_n \circ g_{m,i}^n\)
  and \(g_{m,i} \circ f_{m,i}\), and the canonical maps
  \(\eta_{m,i}^n \colon D_{m,i} \to D_n\) and
  \(\gamma_{m,i}^n \colon C_{m,i} \to C_n\), respectively.
  Then~\(f\) is a local chain homotopy equivalence.
\end{lemma}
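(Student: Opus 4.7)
The plan is to establish local contractibility of the mapping cone \(\cone(f)\) in the sense of Definition~\ref{def:locally_contractible}; by definition, this will give that \(f\) is a local chain homotopy equivalence. Unpacking: at each level \(n\), \(\cone(f)_n = D_n \oplus C_n[1]\) carries the standard cone differential
\[
  \delta_n = \begin{pmatrix} d_{D_n} & f_n \\ 0 & -d_{C_n} \end{pmatrix},
\]
and the structure map of the pro-system is the block-diagonal
\(\alpha^m_{n,i} = \bigl(\begin{smallmatrix} \eta^n_{m,i} & 0 \\ 0 & \gamma^n_{m,i} \end{smallmatrix}\bigr)\)
on \(D_{m,i} \oplus C_{m,i}[1]\). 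For a given \(n\) I take \(m \geq n\) from the hypothesis, and for each \(i \in I_m\) I assemble from the data \((g^n_{m,i}, h^D_{m,i}, h^C_{m,i})\) a candidate null-homotopy of odd degree
\[
  H := \begin{pmatrix} h^D_{m,i} & 0 \\ g^n_{m,i} & -h^C_{m,i} \end{pmatrix}
  \colon \cone(f)_{m,i} \to \cone(f)_n.
\]

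A direct block computation of \(\delta_n H + H \delta_{m,i}\) recovers \(\eta^n_{m,i}\) in the \((1,1)\)-slot (via \(d h^D + h^D d = \eta - f_n g\)) and \(\gamma^n_{m,i}\) in the \((2,2)\)-slot (via \(d h^C + h^C d = \gamma - g f_{m,i}\)); the \((2,1)\)-slot vanishes because \(g^n_{m,i}\) is a chain map. The \((1,2)\)-slot evaluates to \(h^D f_{m,i} - f_n h^C \colon C_{m,i} \to D_n\), which I verify is a cocycle in the Hom-complex using the chain-map conditions on \(f, g\) together with the fact that \(f\) commutes with the pro-ind structure maps, so that \(\eta^n_{m,i} f_{m,i} = f_n \gamma^n_{m,i}\).

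The main obstacle is eliminating this off-diagonal error: one needs to introduce an even correction \(B \colon C_{m,i} \to D_n\) in the \((1,2)\)-slot satisfying
\[
  d_{D_n} B - B d_{C_{m,i}} = f_n h^C - h^D f_{m,i},
\]
i.e., witnessing the cocycle \(h^D f - f h^C\) as a coboundary. I plan to produce such a \(B\) in one of two ways: either by a rectification of the chain homotopies \(h^D\) and \(h^C\) (replacing \(h^D\) by a modification of the form \(h^D + [d,K]\) chosen so that the corrected pair satisfies \(h^D f = f h^C\), using \(g\) and the given data), or, failing a strict rectification, by exploiting the flexibility of morphisms in \(\mathsf{Ind}(\mathsf{Ban}_\dvf)\) to enlarge the target ind-index in \(I_n\) so that the obstruction is killed after composing with a structure map of the ind-system \(D_n\) — this is all that null-homotopy requires in the ind-category. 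Once this rectification step is in hand, the corrected \(H\) is a genuine null-homotopy of \(\alpha^m_{n,i}\) for every \(i \in I_m\); local contractibility at level \(n\) follows, and since \(n\) was arbitrary, \(\cone(f)\) is locally contractible. The rectification step producing \(B\) is the subtlest part of the proof.
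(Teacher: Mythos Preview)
Your setup is correct, and you have correctly isolated the obstruction: after choosing the obvious candidate
\[
  H=\begin{pmatrix} h^D & 0 \\ g & -h^C\end{pmatrix},
\]
one gets \([\delta,H]=\alpha+N\) with \(N=\bigl(\begin{smallmatrix}0&\psi\\0&0\end{smallmatrix}\bigr)\) and \(\psi=h^D f_{m,i}-f_n h^C\) a chain map \(C_{m,i}[-1]\to D_n\). The problem is your proposed resolution of this obstruction.

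Neither of your two plans works. For (a), any ``rectification'' of \(h^D\) to \(h^D+[d,K]\) with \(K\colon D_{m,i}\to D_n\) so that the corrected pair satisfies \(h'^D f=f h^C\) amounts to \([d,Kf]=-\psi\); setting \(B=Kf\) this is exactly the equation you are trying to solve, so the proposal is circular. The classical rectification formulas (those that upgrade homotopy-equivalence data to adjoint-equivalence data) all involve composites such as \(h^D\!\cdot f\!\cdot h^C\) or \(f\!\cdot h^C\!\cdot g\), but here \(h^C\) lands in level~\(n\) while \(h^D\) and \(g\) have source at level~\((m,i)\), so none of these composites are defined. For (b), passing to a larger ind-index in \(D_n\) only postcomposes \(\psi\) with a structure map; there is no mechanism by which this turns a cocycle into a coboundary.

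The paper avoids the search for \(B\) altogether. The point is that the error \(N\) is \emph{strictly block-triangular}, so the product of any two such errors vanishes. Concretely, one computes that \([\delta,\tilde h]=\alpha+N\) as you did, and then takes
\[
  h \;=\; \tilde h\circ \Psi,
  \qquad
  \Psi \;=\; \alpha - N
  \;=\;
  \begin{pmatrix}\eta & f_n h^C-h^D f_{m,i}\\ 0 & \gamma\end{pmatrix},
\]
so that, since \(\Psi\) is a chain map,
\[
  [\delta,h]=(\alpha+N)\,(\alpha-N)=\alpha\circ\alpha,
\]
the cross terms cancelling because \(N\) commutes with the structure map \(\alpha\) and \(N\!\cdot\! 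N=0\) by strict triangularity. One obtains a null-homotopy of the composite structure map \(\alpha\circ\alpha\) (that is, from a level \(k\) with \(k\ge m\ge n\) down to level \(n\)); this is exactly what local contractibility requires. No primitive \(B\) for \(\psi\) is ever needed: the nilpotency of the block-triangular error is what kills it.
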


\begin{proof}
  We need to show that \(\cone(f)\) is locally contractible,
  that is, for each \(n\), there is an \(m \geq n\) such that for
  all \(i \in I_m\), the structure map
  \begin{equation}
    \label{eq:to_be_made_null-homotopic}
    \cone(f)_{m,(i,j)}^n = C[-1]_{m,i} \oplus D_{m,i}
    \xrightarrow{\gamma_{m,i}^n \oplus \eta_{m,i}^n}
    C[-1]_n \oplus D_n = \cone(f)_n
  \end{equation}
  is null-homotopic.  Here \(\gamma\) and~\(\eta\) are the structure
  maps of \(C\) and~\(D\), respectively.
  Let~\(\delta^{\cone(f)_n}\) denote the boundary map of the
  cone of~\(f_n\).  Since \(h_{m,i}^C\), \(h_{m,i}^D\) are local
  chain homotopies between \(g_{m,i} \circ f_{m,i}\) and
  \(\gamma_{m,i}^n\), and \(f_n \circ g_{m,i}\) and
  \(\eta_{m,i}^n\), respectively, the matrix
  \[
    \tilde{h}_{m,i} = \begin{pmatrix}
      -h_{m,i}^{C[-1]} & g_{m,i}^n\\
      0 & h_{m,i}^D
    \end{pmatrix}
    \colon \cone(f)_{m,(i,j)} \to \cone(f)_n
  \]
  satisfies
  \[
    \delta^{\cone(f)_n} \circ \tilde{h}_{m,i} + \tilde{h}_{m,i}
    \circ \delta^{\cone(f)_{m,(i,j)}} \\
    =  \begin{pmatrix}
      \gamma_{m,i}^n &  h_{m,i}^D \circ f_{m,i} - f_n \circ h_{m,i}^C \\
      0 & \eta_{m,i}^n
    \end{pmatrix}.
  \]
  Then we compute that \(h = \tilde{h} \circ \Psi\) with
  \[
    \Psi_{m,i}^n \defeq \begin{pmatrix}
      \eta_{m,i}^n &  f_n \circ h_{m,i}^C - h_{m,i}^D \circ f_{m,i}\\
      0 & \gamma_{m,i}^n
    \end{pmatrix},
  \]
  is the desired null-homotopy for~\eqref{eq:to_be_made_null-homotopic}.
\end{proof}

\begin{lemma}\label{lem:main_local_chain_htpy_equi}
  The chain map
  \((s^\#,\id_A)_*\colon \HAC(A) \to \HAC(A,W,\varrho)\) is a local
  chain homotopy equivalence.
\end{lemma}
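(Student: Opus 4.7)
The plan is to verify the hypotheses of Lemma \ref{lem:explicit_local_homotopy_equivalences} directly, by assembling the partial inverses $f_M$ produced by Lemma \ref{lem:local_inverses_W_to_VA} with the two linear homotopies $H_M$ and $H_X$ built just before the lemma statement. As preparation, I would apply Proposition \ref{pro:HAC_localisation} both to $W$ and to $\dvr[A]$ to realise $\HAC(A,W,\varrho,l)$ as the strict inductive limit of the complexes $\HAC(A,M,\varrho|_M,l)$ indexed by $\dvgen$-adically complete bounded submodules $M \subseteq W$, and $\HAC(A)_l = \HAC(A,\dvr[A],\id_A^\#,l)$ as the strict inductive limit of $\HAC(A,\dvr[X],\id_A^\#|_{\dvr[X]},l)$ indexed by finite subsets $X \subseteq A$. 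Then take a common cofinal reindexing by pairs $(X,M)$ subject to $s^\#(\dvr[X]) \subseteq M$; both projections of such pairs are cofinal in their respective original directed sets (one may use $M = s^\#(\dvr[X])$, which is $\dvgen$-adically complete as a finitely generated $\dvr$-module, or $X = \emptyset$ to enlarge either coordinate). The chain map $f = (s^\#,\id_A)_*$ is then represented at each pair $(X,M)$ by $(s_X^\#,\id_A)_*$.

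Next, set $m = n = l$; no shift in the projective direction is needed, since Lemma \ref{lem:local_inverses_W_to_VA} produces a bounded partial inverse $f_M \colon M \to \dvr[A]$ with $\id_A^\# \circ f_M = \varrho|_M$ at the same level. Via Lemma \ref{lem:hybrid_functorial}, this induces the local back map
\[
  g_{l,(X,M)}^l \defeq (f_M,\id_A)_* \colon \HAC(A,M,\varrho|_M,l) \longrightarrow \HAC(A)_l
\]
for every indexing pair $(X,M)$.

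For the two chain homotopies demanded by Lemma \ref{lem:explicit_local_homotopy_equivalences}, observe first that both $s^\# \circ f_M$ and the inclusion $i_M \colon M \hookrightarrow W$ are bounded $\dvr$-linear liftings of $\varrho|_M$; feeding the linear homotopy $H_M$ between them into Proposition \ref{pro:homotopy_lifting_2}, together with the constant homotopy on $A$, produces a chain homotopy between $f_l \circ g_{l,(X,M)}^l$ and the canonical structure map $\eta_{l,(X,M)}^l$. Symmetrically, both $f_M \circ s_X^\#$ and $i_X \colon \dvr[X] \hookrightarrow \dvr[A]$ lift $\id_A^\#|_{\dvr[X]}$ (using $\varrho \circ s = \id_A$ on one side and $\id_A^\# \circ f_M = \varrho|_M$ on the other), so $H_X$ likewise yields a chain homotopy between $g_{l,(X,M)}^l \circ f_{l,(X,M)}$ and the structure map $\gamma_{l,(X,M)}^l$. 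The main technical nuisance to keep straight is the coherent reindexing in step one, so that a single cofinal index simultaneously represents the forward chain map $f$ (requiring $s^\#(\dvr[X]) \subseteq M$) and accommodates the local back map $g$ built from $f_M$; once this bookkeeping is organised, the three conditions of Lemma \ref{lem:explicit_local_homotopy_equivalences} hold at $m = n = l$ for every $l$, so $(s^\#,\id_A)_*$ is a local chain homotopy equivalence.
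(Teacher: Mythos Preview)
There is a genuine gap in your argument at the step where you invoke Proposition~\ref{pro:homotopy_lifting_2} to obtain chain homotopies \emph{at a fixed level} \(l\), with ``no shift in the projective direction.''  The elementary algebra homotopies \(H_M\) and \(H_X\) are indeed levelwise: they live at each fixed~\(l\) as maps
\[
  \ling{\tub{\tens M}{(\tens M\cap I)^l}}\otimes\dvf
  \longrightarrow
  \ling{\tub{\tens W}{I^l}}\otimes\ling{\dvr[t]}\otimes\dvf.
\]
But the passage from an algebra homotopy to a chain homotopy on the \(X\)\nb-complex in the proof of Proposition~\ref{pro:homotopy_lifting_2} uses that the source pro-algebra is \emph{quasi-free}, and this holds only for the full projective system \(\ling{\tub{\tens W}{I^\infty}}\otimes\dvf\), not for the individual algebras \(\ling{\tub{\tens W}{I^l}}\otimes\dvf\).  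The resulting chain homotopy is therefore a morphism of projective systems of degree~\(1\), and in general shifts the level: it is represented by maps from the entry at some \(k_l\ge l\) to the entry at~\(l\).  So you cannot take \(m=n=l\).

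The paper addresses exactly this point.  It passes to the truncated \(B\)-\(b\)-bicomplex \(\mathcal{X}^{(2)}\), where homotopic algebra homomorphisms \emph{do} induce chain-homotopic maps at each fixed level (no quasi-freeness needed for that step).  Quasi-freeness of the pro-system (Lemma~\ref{tube-tensor-quasi_free}) is then used only to produce a homotopy inverse of the projection \(\mathcal{X}^{(2)}\to X\), which is where the level shift \(k_l\to l\) enters.  Precomposing with this inverse transports the \(\mathcal{X}^{(2)}\)-level chain homotopies down to the \(X\)-complex with the shift built in, and then Lemma~\ref{lem:explicit_local_homotopy_equivalences} applies with \(m=k_l\).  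Your organisation of the inductive indices by pairs \((X,M)\) is fine, but the projective bookkeeping must accommodate this shift.
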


\begin{proof}
  Recall the notation \(\HAC(A,W,\varrho,l)\) introduced
  in~\eqref{eq:HAC_l}.  We know that
  \begin{equation}
    \label{eq:indlim_local}
    \HAC(A,W,\varrho,l) \cong \varinjlim
    \HAC(A,M,\varrho|_M,l),\qquad
    \HAC(A) \cong \varinjlim \HAC(A,\dvr[X],\id_A^\#|_{\dvr[X]},l)
  \end{equation}
  for all fixed \(l\in\N^*\), where the inductive systems are
  indexed by the \(\dvgen\)\nb-adically complete, bounded
  \(\dvr\)\nb-submodules \(M \subseteq W\) and the finite subsets
  \(X\subseteq A\), respectively.  The chain maps \((f,g)_*\)
  produced by Lemma~\ref{lem:hybrid_functorial} are levelwise, that
  is, they are represented by a coherent family of chain maps for
  fixed \(l\in\N^*\).  The elementary homotopies of algebra
  homomorphisms in Proposition~\ref{pro:homotopy_lifting_2} are
  levelwise as well.  The induced chain homotopies are, however, not
  quite levelwise.  The algebras \(\tub{\tens W}{I^l} \otimes \dvf\)
  for \(l \in \N^*\) are not quasi-free for fixed~\(l\).  Therefore,
  we cannot directly apply the homotopy invariance of the
  \(X\)\nb-complex to the local homomorphisms for a fixed
  \(l\in\N^*\) built above.

  In Lemma~\ref{lem:hybrid_functorial} and
  Proposition~\ref{pro:homotopy_lifting_2}, we already stated some
  intermediate results.  Namely, \(s^\#\colon \dvr[A] \to W\),
  \(f_M\colon M \to \dvr[A]\) and \(s_X^\#\colon \dvr[X] \to W\) induce
  bounded \(\dvf\)\nb-algebra homomorphisms
  \begin{align*}
    (s^\#)_*\colon
    \ling{\tub{\tens\dvr[A]}{J^l}}\otimes \dvf
    &\to \ling{\tub{\tens W}{I^l}} \otimes \dvf,\\
    (f_M)_*\colon
    \ling{\tub{\tens M}{(\tens M \cap I)^l}} \otimes \dvf
    &\to \ling{\tub{\tens\dvr[A]}{J^l}}\otimes \dvf,\\
    (s_X^\#)_*\colon
    \ling{\tub{\tens\dvr[X]}{(J\cap \tens\dvr[X])^l}}\otimes \dvf
    &\to \ling{\tub{\tens M}{(\tens M \cap I)^l}} \otimes \dvf.\\
    \shortintertext{And the composite maps}
    (s^\#)_*\circ (f_M)_*\colon
    \ling{\tub{\tens M}{(\tens M \cap I)^l}} \otimes \dvf
    &\to \ling{\tub{\tens W}{I^l}} \otimes \dvf,\\
    (f_M)_* \circ (s_X^\#)_*\colon
    \ling{\tub{\tens\dvr[X]}{(J\cap \tens \dvr[X])^l}}\otimes \dvf
    &\to \ling{\tub{\tens\dvr[A]}{J^l}}\otimes \dvf
  \end{align*}
  are homotopic to the canonical inclusion maps.

  In the following argument, we
  tacitly turn bornological \(\dvf\)\nb-vector spaces into inductive
  systems of seminormed \(\dvf\)\nb-vector spaces through the
  dissection functor in~\eqref{eq:dissection_over_F} in order to
  speak about local chain homotopy equivalences. Fix \(l \geq 1\).

  Let \(\mathcal{X}^{(2)}(D)\) for a pro-algebra~\(D\) be the
  truncated \(B\)-\(b\)-bicomplex of~\(D\) as in
  \cite{Meyer:HLHA}*{Definition~A.122}.  It is linked to the
  \(X\)\nb-complex by a canonical projection
  \(p_D\colon \mathcal{X}^{(2)}(D) \to X(D)\).  This projection is a
  chain homotopy equivalence if~\(D\) is quasi-free.  The bounded
  algebra homomorphisms above induce a commuting square of chain
  maps
  \[
    \begin{tikzcd}[column sep=8em]
      \mathcal{X}^{(2)}(\ling{\tub{\tens M}{(\tens M \cap I)^l}}
      \otimes \dvf)
      \ar[r, "\mathcal{X}^{(2)}((s^\#)_*\circ (f_M)_*)"]
      \ar[d, "p_{\ling{\tub{\tens M}{(\tens M \cap I)^l}}\otimes \dvf}"] \ar[dr, "\psi_1"] &
      \mathcal{X}^{(2)}(\ling{\tub{\tens W}{I^l}} \otimes \dvf)
      \ar[d, "p_{\ling{\tub{\tens W}{I^l}}\otimes \dvf}"]
      \\
      X(\ling{\tub{\tens M}{(\tens M \cap I)^l}} \otimes \dvf)
      \ar[r, "X((s^\#)_*\circ (f_M)_*)"] &
      X(\ling{\tub{\tens W}{I^l}} \otimes \dvf).
    \end{tikzcd}
  \]
  The canonical inclusion map \(i_M \colon \ling{\tub{\tens M}{(\tens M \cap I)^l}}
      \otimes \dvf \to \ling{\tub{\tens M}{(\tens M \cap I)^l}} \otimes \dvf\) induces another such commuting square;
  let~\(\psi_2\) denote the diagonal map in that square.  Since
  \((s^\#)_*\circ (f_M)_*\) is homotopic to the canonical inclusion
  homomorphism \(i_M\), the two diagonal chain maps
  \[
    \psi_1,\psi_2\colon
    \mathcal{X}^{(2)}(\ling{\tub{\tens M}{(\tens M \cap I)^l}}
    \otimes \dvf) \rightrightarrows
    X(\ling{\tub{\tens W}{I^l}} \otimes \dvf)
  \]
  are chain homotopic.  This is shown in the proof of
  \cite{Cortinas-Meyer-Mukherjee:NAHA}*{Proposition~4.6.1}, see also
  the proof of \cite{Meyer:HLHA}*{Theorem~4.27}.  Both results cited
  above assume quasi-freeness, but use it only to prove that the
  vertical projections from \(\mathcal{X}^{(2)}\) to~\(X\) are
  homotopy equivalences.  When we vary the tube algebra
  parameter~\(l\), our algebras become quasi-free:

  \begin{lemma}
    \label{tube-tensor-quasi_free}
    Let \(J\defeq \ker (\id_A^\#\colon \tens \dvr[A] \to A)\) and
    \(I\defeq \ker (\varrho^\#\colon \tens W \to A)\).  The
    pro-algebras \(\ling{\tub{\tens\dvr[A]}{J^\infty}}\),
    \(\ling{\tub{\tens W}{I^\infty}}\),
    \(\ling{\tub{\tens\dvr[A]}{J^\infty}} \otimes \dvf\) and
    \(\ling{\tub{\tens W}{I^\infty}} \otimes \dvf\) are quasi-free.
  \end{lemma}

  \begin{proof}
    The proof is similar to the proof of
    \cite{Cortinas-Meyer-Mukherjee:NAHA}*{Proposition~4.4.6}.  The
    properties of being nilpotent mod~\(\dvgen\) and semidagger are
    both hereditary for extensions of pro-algebras, by
    \cite{Cortinas-Meyer-Mukherjee:NAHA}*{Proposition~4.2.5} and by
    adapting the proof of
    \cite{Cortinas-Meyer-Mukherjee:NAHA}*{Proposition~4.3.13}.  This
    shows that \(\ling{\tub{\tens\dvr[A]}{J^\infty}}\) and
    \(\ling{\tub{\tens W}{I^\infty}}\) are quasi-free.  Tensoring
    with~\(\dvf\) preserves quasi-freeness.
  \end{proof}

  Therefore, the canonical projection
  \[
    p_{\ling{\tub{\tens W}{I^\infty}} \otimes \dvf}\colon
    \mathcal{X}^{(2)}(\ling{\tub{\tens W}{I^\infty}} \otimes \dvf) \to
    X(\ling{\tub{\tens W}{I^\infty}} \otimes \dvf)
  \]
  is a chain homotopy equivalence.  The homotopy inverse of
  \(p_{\ling{\tub{\tens W}{I^\infty}} \otimes \dvf}\) is a morphism
  of projective systems of chain complexes.  Therefore, it consists
  of chain maps
  \(\varphi_{k_l}^l\colon X(\ling{\tub{\tens W}{I^{k_l}}} \otimes
  \dvf) \to \mathcal{X}^{(2)}(\ling{\tub{\tens W}{I^l}} \otimes
  \dvf)\) for suitable \(k_l\in\N\).  Precomposing the chain maps
  \(p_{\ling{\tub{\tens W}{I^\infty}} \otimes \dvf} \circ
  \mathcal{X}^{(2)}((s^\#)_*\circ (f_M)_*)\) and
  \(p_{\ling{\tub{\tens W}{I^\infty}} \otimes \dvf} \circ
  \mathcal{X}^{(2)}(i_M)\) with
  \(\varphi_{k_l,M}^l \colon X(\ling{\tub{\tens M}{(\tens M \cap
      I)^{k_l}}} \otimes \dvf) \to
  \mathcal{X}^{(2)}(\ling{\tub{\tens M}{(\tens M \cap I)^l}} \otimes
  \dvf)\), and using the commutativity of the diagram above, we see
  that the two chain maps
  \[
    \begin{tikzcd}[column sep=9em]
      X(\ling{\tub{\tens M}{(\tens M \cap I)^{k_l}}} \otimes \dvf)
      \ar[r, shift left, "X((s^\#)_*\circ (f_M)_*) \circ \mathsf{can}_{k_l,M}^l"]
      \ar[r, shift right, "X(i_M) \circ \mathsf{can}_{k_l,M}^l"']&
      X(\ling{\tub{\tens W}{I^l}} \otimes \dvf)      
    \end{tikzcd}
  \]
  are chain homotopic.  Here \(\mathsf{can}_{k_l,M}^l\) are the
  structure maps on the \(X\)\nb-complex.  Similarly, we get a chain
  homotopy between the two maps
  \[
    \begin{tikzcd}[column sep=7.5em]
      X(\ling{\tub{\tens\dvr[X]}{(J\cap \tens\dvr[X])^{k_l}}}\otimes \dvf)
      \ar[r, shift left, "X((f_M)_* \circ (s_X^\#)_*) \circ \mathsf{can}_{k_l,X}^l"]
      \ar[r, shift right, "X(i_X)\circ \mathsf{can}_{k_l,X}^l"']&
      X(\ling{\tub{\tens\dvr[A]}{J^l}}\otimes \dvf).
    \end{tikzcd}
  \] 
  Since \(M\) and~\(X\) are arbitrary bounded submodules, the
  criterion in Lemma~\ref{lem:explicit_local_homotopy_equivalences}
  shows that
  \((s^\#,\id_A)_*\colon X(\ling{\tub{\tens \dvr[A]}{J^\infty}}
  \otimes \dvf)\to X(\ling{\tub{\tens W}{I^\infty}} \otimes \dvf)\)
  is a local chain homotopy equivalence.

  The chain complexes \(\HAC(A)\) and \(\HAC(A,W,\varrho)\) are the
  entrywise completions of
  \(X(\ling{\tub{\tens\dvr[A]}{J^\infty}} \otimes \dvf)\) and
  \(X(\ling{\tub{\tens W}{I^\infty}} \otimes \dvf)\), respectively.
  The entrywise completion for pro-ind systems is a functor that
  preserves local chain homotopy equivalences because it is defined
  entrywise.  Therefore, the map \(\HAC(A) \to \HAC(A,W,\varrho)\)
  induced by~\(s\) is a local chain homotopy equivalence as well.
\end{proof}

Lemmas \ref{lem:locally_contractible_exact} and ~\ref{lem:main_local_chain_htpy_equi} finish the proof of
Theorem~\ref{the:HA_through_lift}.

\section{Liftings to pro-dagger algebras}
\label{sec:lift_pro_dagger}

Let~\(A\) be an \(\resf\)\nb-algebra.  If there is a projective
system of dagger algebras~\(D\) as in
Theorem~\ref{the:pro-dagger_lifting}, then the theorem gives us a
quasi-isomorphism \(\HAC(A) \simeq \diss \HAC(D)\).  We want to use
this to transfer properties of analytic cyclic homology for
projective systems of dagger algebras such as excision and matrix
stability to \(\HAC(A)\).  For this to work, we need that such a
projective system of dagger algebras exists for any~\(A\).  This is
what we are going to prove in this section.

Let~\(A\) be an \(\resf\)\nb-algebra, let~\(W\) be a torsionfree
complete bornological \(\dvr\)\nb-module, and let
\(\varrho\colon W\to A\) be a surjective bounded \(\dvr\)\nb-linear
map.  Let \(R\defeq \tens W\) and let
\(I\defeq \ker (\varrho^\#\colon R \onto A)\).  The tube pro-algebra
\(\tub{R}{I^\infty}\) contains \(\tub{I}{I^\infty}\) as an ideal.
The main issue in this section is to prove that the canonical map
from~\(A \cong R/ I\) to the quotient \(\tub{R}{I^\infty}/\tub{I}{I^\infty}\)
is an isomorphism.  This is nontrivial because the universal
property of tube algebras only gives maps to \emph{torsionfree}
pro-algebras, but not to~\(A\).  Taking linear growth bornologies
and completions, we would then get an extension of pro-dagger algebras
\[
  \comb{\ling{\tub{I}{I^\infty}}} \into
  \comb{\ling{\tub{R}{I^\infty}}} \onto
  A
\]
with analytically nilpotent kernel, as needed in
Theorem~\ref{the:pro-dagger_lifting}.

In the following, let~\(W\) be a bornological \(\dvr\)\nb-module.
Let \(W^+ \defeq W\oplus \dvr\) and let \(\Omega^0(W)\defeq W\) and
\(\Omega^l(W) \defeq W^+\otimes W^{\otimes l}\) for \(l>0\).  Let
\[
  \Omega(W) \defeq \bigoplus_{l=0}^\infty \Omega^l (W),\qquad
  \Omega^\even(W) \defeq \bigoplus_{l=0}^\infty \Omega^{2l} (W),\qquad
  \Omega^\odd(W) \defeq \bigoplus_{l=0}^\infty \Omega^{2l+1} (W).
\]
We are going to define a bornological \(\dvr\)\nb-module isomorphism
\[
\iota\colon \Omega^\even (W) \overset{\cong}\to \tens W,
\]
using a \(\dvr\)\nb-bilinear multiplication map
\(\mu \colon W \otimes W \to W\).  This is well known if~\(\mu\) is
associative (see~\cite{Cuntz-Quillen:Algebra_extensions}).
When~\(\mu\) is not associative, then the multiplication
of~\(\tens W\) transported to~\(\Omega^\even (W)\) through~\(\iota\)
becomes more complicated.  Let~\(\otimes\) denote the multiplication
in~\(\tens W\).  For \(x_1,x_2\in W\), we let
\[
  \iota(\diff x_1 \,\diff x_2) \defeq \mu(x_1,x_2) - x_1\otimes x_2
  \in \tens W.
\]
We extend this to general differential forms by
\begin{align*}
  \iota(\diff x_1\,\diff x_2 \ldots \diff x_{2l-1}\,\diff x_{2l})
  &\defeq \iota(\diff x_1\,\diff x_2) \otimes \dotsb \otimes \iota(\diff x_{2l-1}\,\diff x_{2l}),\\
  \iota(x_0 \,\diff x_1\,\diff x_2 \ldots \diff x_{2l-1}\,\diff x_{2l})
  &\defeq x_0 \otimes \iota(\diff x_1\,\diff x_2) \otimes \dotsb \otimes \iota(\diff x_{2l-1}\,\diff x_{2l})
\end{align*}
for \(x_0,\dotsc,x_{2l}\in W\).  These maps
\(\Omega^{2l}(W) \to \tens W\) combine to a bounded
\(\dvr\)\nb-module homomorphism
\(\iota\colon \Omega^\even(W) \to \tens W\).

\begin{lemma}
  \label{lem:tensor_with_Fedosov}
  The map \(\iota\colon \Omega^\even(W) \to \tens W\) is a
  bornological isomorphism.
\end{lemma}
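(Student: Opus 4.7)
The plan is to factor \(\iota = \iota_0 \circ (\id + N)\), where \(\iota_0\) is a tautological bornological isomorphism and \(N\) is an endomorphism of \(\Omega^\even(W)\) that strictly decreases a natural grading, and then to invert the latter factor by a Neumann series that collapses to a finite sum on every bounded subset.

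First I would introduce the obvious bornological isomorphism \(\iota_0 \colon \Omega^\even(W) \to \tens W\) defined on direct summands by
\[
  \iota_0(x_0\,\diff x_1 \dotsc \diff x_{2l}) \defeq (-1)^l x_0 \otimes x_1 \otimes \dotsb \otimes x_{2l},\qquad
  \iota_0(\diff x_1 \dotsc \diff x_{2l}) \defeq (-1)^l x_1 \otimes \dotsb \otimes x_{2l}.
\]
Under the identifications \(\Omega^{2l}(W) \cong W^{\otimes(2l+1)} \oplus W^{\otimes 2l}\) for \(l \ge 1\) and \(\Omega^0(W) = W\), together with \(\tens W = \bigoplus_{n \ge 1} W^{\otimes n}\), the map \(\iota_0\) merely rearranges direct summands with signs, so it is manifestly a bornological isomorphism. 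I would then grade \(\Omega^\even(W)\) by declaring \(\diff x_1\dotsc \diff x_{2l}\) to have grade~\(2l\) and \(x_0\,\diff x_1\dotsc \diff x_{2l}\) grade~\(2l+1\), so that \(\iota_0\) is grade-preserving with respect to the tensor-degree grading on \(\tens W\).

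The key observation is that \(N \defeq \iota_0^{-1} \circ (\iota - \iota_0)\) is a bounded \(\dvr\)\nb-linear endomorphism of \(\Omega^\even(W)\) that strictly decreases grade. This follows by expanding the product \((\mu(x_1,x_2) - x_1 \otimes x_2) \otimes \dotsb \otimes (\mu(x_{2l-1},x_{2l}) - x_{2l-1} \otimes x_{2l})\): the unique top-grade summand picks \(-x_{2i-1}\otimes x_{2i}\) in every factor and reproduces exactly \(\iota_0\), while every other summand replaces at least one tensor factor of degree~\(2\) by \(\mu(\cdot,\cdot)\) of tensor degree~\(1\) and hence has strictly smaller tensor degree. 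Since every bounded subset of \(\bigoplus_{l \ge 0} \Omega^{2l}(W)\) is contained in a finite partial sum \(\bigoplus_{l \le a} \Omega^{2l}(W)\), the operator \(N\) is locally nilpotent: \(N^{2a+2}\) vanishes on this sub-\(\dvr\)-module. Therefore the Neumann series \((\id + N)^{-1} = \sum_{k \ge 0} (-1)^k N^k\) collapses to a finite sum on every bounded subset, providing a bounded inverse to \(\id + N\); then \(\iota^{-1} = (\id + N)^{-1} \circ \iota_0^{-1}\) is bounded.

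The main subtlety I anticipate is not algebraic --- the construction is a mild generalisation of the classical Fedosov identification of Cuntz--Quillen that works for a bilinear, not necessarily associative, multiplication \(\mu\) --- but bornological: one must confirm that the direct-sum bornology on \(\Omega^\even(W)\) genuinely concentrates each bounded subset in finitely many grades, so that local nilpotence of \(N\) translates to bounded, and not merely algebraic, invertibility of \(\id + N\).
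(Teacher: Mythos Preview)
Your argument is correct. Both your proof and the paper's exploit the same underlying observation: with respect to the tensor-degree grading, \(\iota\) is ``unipotent upper-triangular'' with leading part~\(\iota_0\). The paper packages this as a filtration argument --- it filters \(\tens W\) and \(\Omega^\even W\) by tensor degree, observes that \(\iota\) induces \(\pm\id\) on each associated graded piece, and then appeals to the Five Lemma in the quasi-Abelian category of bornological \(\dvr\)-modules to conclude by induction that \(\iota\) is a bornological isomorphism on each filtration level. Your factorisation \(\iota = \iota_0 \circ (\id + N)\) with \(N\) locally nilpotent is the same triangular structure made explicit, and the Neumann series gives the inverse directly without invoking the Five Lemma. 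Your route is slightly more elementary in that it sidesteps the appeal to quasi-Abelian homological algebra; the paper's route is a bit more conceptual and generalises readily to other filtered situations. The bornological point you flag --- that bounded subsets of a direct sum live in finitely many summands, so local nilpotence of \(N\) yields a \emph{bounded} inverse --- is exactly the place where the paper's argument uses that any bounded subset of \(\Omega^\even W\) or \(\tens W\) already lies in some \((\Omega^\even W)_j\) or \((\tens W)_j\).
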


\begin{proof}
  We define filtrations on \(\tens W\) and \(\Omega^\even W\).  Let
  \((\tens W)_j\) be the \(\dvr\)\nb-submodule generated by
  \(x_1\otimes\dotsb \otimes x_m\) with \(m\le j\) and \(x_i\in W\).
  Let \((\Omega^\even W)_j\) be the \(\dvr\)\nb-submodule generated
  by \(\Omega^{2k} W\) for all~\(k\) with \(2k<j\) and by the
  ``closed'' \(2k\)\nb-forms \(\diff x_1\dotsc \diff x_{2k}\) if
  \(2k = j\).  We prove by induction on~\(j\) that~\(\iota\)
  restricts to a bornological isomorphism from
  \((\Omega^\even W)_j\) onto \((\tens W)_j\).  The assertion is
  empty for \(j=0\).  Assuming it for \(j-1\ge0\), we prove it
  for~\(j\).  We have
  \((\tens W)_j / (\tens W)_{j-1} \cong W^{\otimes j}\).  By
  definition, \(\iota\) maps \(\diff x_1 \dotsc \diff x_{2 k}\) and
  \(x_0 \,\diff x_1 \dotsc \diff x_{2 k}\) to
  \((-1)^k x_1 \otimes \dotsb \otimes x_{2 k}\) and
  \((-1)^k x_0\otimes x_1 \otimes \dotsb \otimes x_{2 k}\) modulo
  shorter terms.  So~\(\iota\) maps \((\Omega^\even W)_j\)
  into~\((\tens W)_j\) and induces a bornological isomorphism from
  \((\Omega^\even W)_j / (\Omega^\even W)_{j-1}\) onto
  \((\tens W)_j / (\tens W)_{j-1}\).  Since the category of
  bornological \(\dvr\)\nb-modules is quasi-Abelian, the Five Lemma
  is valid.  Using the induction assumption, we conclude
  that~\(\iota\) induces a bornological isomorphism from
  \((\Omega^\even W)_j\) onto~\((\tens W)_j\).  Any bounded subset
  of \(\Omega^\even W\) or~\(\tens W\) is already bounded in
  \((\Omega^\even W)_j\) or~\((\tens W)_j\) for some~\(j\).  Now the
  assertion follows.
\end{proof}

In the following, we identify \(\tens W\) and~\(\Omega^\even W\)
using~\(\iota\).  Let~\(\odot\) be the associative multiplication
on~\(\Omega^\even W\) that corresponds to the multiplication
in~\(\tens W\).  If~\(\mu\) is associative, then~\(\odot\) is the
well known \emph{Fedosov product},
\(\omega \odot \eta = \omega \eta - \diff \omega \,\diff\eta\)
(see~\cite{Cuntz-Quillen:Algebra_extensions}); here \(\omega \eta\)
denotes the usual multiplication of differential forms, which is
dictated by the Leibniz rule.  When~\(\mu\) is non-associative, we
must correct this by terms involving the associator of~\(\mu\).
Writing~\(\mu\) multiplicatively, we compute
\begin{multline}
  \label{eq:modified_Fedosov}
  (\diff x_1\, \diff x_2)\odot x_3
  = (x_1\cdot x_2 - x_1 \odot x_2) \odot x_3
  \\
  \begin{aligned}[t]
    &= (x_1 x_2)x_3 - \diff (x_1 x_2) \,\diff x_3
    - x_1 \odot (x_2\cdot x_3) + x_1 \,\diff x_2\,\diff x_3
    \\&= (x_1 x_2)x_3 - x_1 (x_2 x_3)
    + x_1 \,\diff x_2\,\diff x_3 - \diff (x_1 x_2) \,\diff x_3
    + \diff x_1 \,\diff (x_2 x_3)
  \end{aligned}
\end{multline}
for all \(x_1,x_2,x_3\in W\).

Now we choose~\(W\) as follows.  Let \(X\subseteq A\) be a basis
of~\(A\) as an \(\resf\)\nb-vector space and let \(W\defeq \dvr[X]\).
Let \(\varrho\colon \dvr[X] \to A\) be induced by the inclusion
\(X\hookrightarrow A\).  Then~\(\varrho\) is surjective and
\(\ker \varrho = \dvgen\cdot \dvr[X] = \dvgen W\).

Lift the algebra structure on~\(A\) to a nonassociative
multiplication~\(\mu\) on~\(W\) as above and use this to identify
\(\tens W \cong \Omega^\ev W\).  The induced algebra homomorphism
\(\varrho^\#\colon \tens W\to A\) satisfies
\(\varrho^\#\circ \iota(\diff x_1 \, \diff x_2)=0\) because~\(\mu\)
lifts the multiplication in~\(A\).  Hence
\(\varrho^\#\circ \iota\colon \Omega^\even W \to A\) maps
\(\sum_{n\in\N} \omega_{2n}\) with \(\omega_{2n} \in \Omega^{2n} W\)
to \(\varrho(\omega_0)\).  Therefore,
\begin{equation}
  \label{eq:kernel_tens_to_A0}
  I\defeq \ker(\tens W\to A)
  = \dvgen W \oplus \bigoplus_{j=1}^\infty \Omega^{2 j} W.
\end{equation}
In the following, the powers~\(I^m\) of~\(I\) are defined through
the multiplication in \(\tens W\), which corresponds to the modified
Fedosov product~\(\odot\) described above.

\begin{lemma}
  \label{lem:powers_I_TB}
  If \(m\in\N^*\), then
  \[
  \tub{\tens W}{I^m} =
  \sum_{j=0}^\infty \dvgen^{-\floor{j/m}} \Omega^{2 j} W,\qquad
  \tub{I}{I^m} =
  I \oplus \sum_{j=1}^\infty \dvgen^{-\floor{j/m}} \Omega^{2 j} W,
  \]
\end{lemma}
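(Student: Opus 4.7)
The plan is to transfer the computation to $\Omega^{\ev}(W)$ via the isomorphism $\iota$ of Lemma~\ref{lem:tensor_with_Fedosov} and to introduce the weight filtration
\[
\Fil^n \defeq \sum_{a+j\geq n,\, a,j\geq 0} \dvgen^a\,\Omega^{2j}(W) \subseteq \Omega^{\ev}(W),
\]
so that $\sum_j \omega_j$ with $\omega_j\in\Omega^{2j}(W)$ lies in $\Fil^n$ if and only if each $\omega_j$ is divisible by $\dvgen^{\max(0,n-j)}$. By~\eqref{eq:kernel_tens_to_A0}, the ideal $I$ equals $\Fil^1$ as a $\dvr$-submodule, so the description of $\tub{\tens W}{I^m}$ reduces to understanding $(\Fil^1)^k$ inside this filtration.

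The central technical step is to prove $\Fil^a\odot\Fil^b\subseteq\Fil^{a+b}$, where $\odot$ is the multiplication on $\Omega^{\ev}(W)$ transported from $\tens W$. In the associative case this is immediate from the Fedosov formula $\omega\odot\eta = \omega\eta - d\omega\,d\eta$, since both summands have degree at least $|\omega|+|\eta|$. For the non-associative $\mu$, equation~\eqref{eq:modified_Fedosov} shows that the corrections beyond Fedosov are associators $\mu(\mu(x,y),z)-\mu(x,\mu(y,z))\in W$; crucially these lie in $\ker\varrho = \dvgen W$, since $\mu$ lifts the associative multiplication on $A$, and the extra factor of $\dvgen$ exactly compensates for the drop of $1$ in form degree. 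An induction on the length of a product then handles the general case, giving $I^{mk}\subseteq\Fil^{mk}$.

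For the reverse inclusion I would use the definition of $\iota$ directly: $\iota(\dd x_1\,\dd x_2)=\mu(x_1,x_2)-x_1\otimes x_2$ lies in $I$, and multiplicativity of $\iota$ yields $\iota(\Omega^{2j}(W))\subseteq I^j$ for $j\geq 1$, the case of a leading $x_0$ being handled because $I$ is a two-sided ideal. Combined with $\dvgen W\subseteq I$, one obtains $\dvgen^{-\floor{j/m}}\Omega^{2j}(W)\subseteq \dvgen^{-\floor{j/m}}I^{m\floor{j/m}}\subseteq\tub{\tens W}{I^m}$. Matching the degree-$j$ component of $\dvgen^{-k}I^{mk}\subseteq\dvgen^{-k}\Fil^{mk}$ against $\dvgen^{-\floor{j/m}}\Omega^{2j}(W)$ then reduces to the elementary inequality $(m-1)k+\floor{j/m}\geq j$ for $0\leq j\leq mk$, which is immediate by writing $j=qm+r$ with $0\leq r<m$.

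The second formula follows from the first by observing that $\tub{I}{I^m}$ is the kernel of the natural surjection $\tub{\tens W}{I^m}\twoheadrightarrow A$ that projects onto the degree-zero component and then applies $\varrho$; this kernel agrees with $\tub{\tens W}{I^m}$ in positive degrees and reduces the degree-zero part from $W$ to $I\cap W=\dvgen W$. The main obstacle throughout is the multiplicativity $\Fil^a\odot\Fil^b\subseteq\Fil^{a+b}$, whose verification requires careful bookkeeping of the associator corrections in the non-associative Fedosov product; the remaining steps are matching exponents.
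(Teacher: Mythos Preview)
Your approach is correct and is essentially the paper's argument in different packaging. Your filtration $\Fil^m$ coincides with the submodule $I^{(m)} \defeq \sum_{k=1}^m \dvgen^{m-k}I^k$ that the paper introduces (after adjoining a unit via $I^{(+)} = I \oplus \dvgen\,\dvr$), and your multiplicativity step $\Fil^1 \odot \Fil^{m} \subseteq \Fil^{m+1}$ is precisely the paper's induction step computing $I \odot I^{(m)} + \dvgen I^{(m)}$; both hinge on \eqref{eq:modified_Fedosov} and the observation that the associators of~$\mu$ lie in $\dvgen W$.
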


\begin{proof}
  To simplify the formulas, we adjoin a unit element to~\(\tens W\)
  and allow \(x_0 \in W^+ \defeq W\oplus \dvr\cdot 1\) to treat
  forms with and without~\(x_0\) on an equal footing.  Let
  \(I^{(+)} \defeq I\oplus \dvgen\cdot \dvr \subseteq W^+\).  So
  \(\dvgen \in I^{(+)}\).  Let
  \(I^{(m)} \defeq \sum_{j=1}^m \dvgen^{m-j} I^j\).  Induction shows
  that
  \[
  (I^{(+)})^m
  \defeq (I\oplus \dvgen\cdot \dvr)^m
  = I^{(m)} \oplus \dvgen^m\cdot \dvr.
  \]
  Then \(\tub{\tens W}{(I^{(+)})^l} = \tub{\tens W}{I^l}\) because
  \[
  \sum_{j=1}^\infty \dvgen^{-j} I^{(m j)}
  = \sum_{j=1}^\infty \sum_{k=1}^{m j} \dvgen^{m j - k - j} I^k
  = \sum_{j=1}^\infty \dvgen^{-j} I^{m j}.
  \]
  Let \(m\ge1\) and \(j\ge0\).  Let \((m-j)_+\) be \(m-j\) if
  \(m-j\ge0\) and~\(0\) otherwise.  Let \(x_0\in W^+\),
  \(x_1,\dotsc,x_{2 j}\in W\).  Then
  \begin{equation}
    \label{eq:decompose_diff_form}
    \dvgen^{(m-j)_+}x_0 \,\diff x_1\ldots \diff x_{2 j}
    = (\dvgen^{(m-j)_+}x_0) \odot \diff x_1 \diff x_2 \odot \dotsb
    \odot \diff x_{2j -1} \diff x_{2 j}
  \end{equation}
  is a product of~\(j\) terms \(\diff x_{2i-1} \diff x_{2i} \in I\)
  and the term \(\dvgen^{(m-j)_+} x_0\), which involves
  \((m-j)_+\ge m-j\) factors \(\dvgen\in I^{(+)}\).  Thus
  \(\dvgen^{(m-j)_+} x_0 \diff x_1\ldots \diff x_{2 j} \in
  I^{(m)}\).  We prove by induction on~\(m\) that these elements
  generate~\(I^{(m)}\).  This is trivial for \(m=0\), and it
  is~\eqref{eq:kernel_tens_to_A0} for \(m=1\).

  Let \(m\ge0\) and assume the assertion is shown for~\(I^{(m)}\).
  We must prove it for
  \(I^{(m+1)}=I\odot I^{(m)} + \dvgen\cdot I^{(m)}\).  The second
  summand is easy to handle.  The first is generated by products
  \(\dvgen x_0 \odot \dvgen^{(m-j)_+} y_0 \diff y_1\ldots \diff
  y_{2j}\) and
  \(x_0\, \diff x_1\dotsb \diff x_{2 i} \odot \dvgen^{(m-j)_+} y_0
  \, \diff y_1\ldots \diff y_{2 j}\) with \(i>0\).  The first
  product is
  \begin{multline*}
    \dvgen x_0 \odot \dvgen^{(m-j)_+}y_0 \, \diff y_1\ldots \diff y_{2 j}
    \\= \dvgen^{1+(m-j)_+} (x_0 y_0) \, \diff y_1\ldots \diff y_{2 j}
    -\dvgen^{1+(m-j)_+} \,\diff x_0\, \diff y_0\, \diff y_1\ldots \diff y_{2 j},
  \end{multline*}
  where both summands have the desired form.  In the second product,
  we rewrite
  \(x_0 \,\diff x_1\dotsb \diff x_{2 i} \odot \dvgen^{(m-j)_+} y_0\)
  using~\eqref{eq:modified_Fedosov}.  All associators that appear
  here belong to~\(\dvgen W\) because \(A = W/\dvgen W\) is
  associative.  Hence we get a sum of forms that have non-zero
  degree or belong to~\(\dvgen W\).  So each summand in the total
  product has the desired form.  This completes our description
  of~\(I^{(m)}\).  The assertions on tube algebras follow
  immediately.
\end{proof}

\begin{theorem}
  \label{the:prodagger_lifting}
  Let~\(A\) be an \(\resf\)\nb-algebra.  Let~\(W\) be a torsionfree
  complete bornological \(\dvr\)\nb-module that is fine
  mod~\(\dvgen\).  Let \(\varrho\colon W\to A\) be a surjective
  \(\dvr\)\nb-linear map.  Let
  \(I \defeq \ker (\varrho^\#\colon \tens W \to A)\).  There is an
  extension of pro-algebras
  \[
    \comb{\ling{\tub{I}{I^\infty}}} \into
    \comb{\ling{\tub{\tens W}{I^\infty}}} \onto
    A,
  \]
  where \(\comb{\ling{\tub{\tens W}{I^\infty}}}\) is a pro-dagger
  algebra that is fine mod~\(\dvgen\) and
  \(\comb{\ling{\tub{I}{I^\infty}}}\) is analytically nilpotent.
\end{theorem}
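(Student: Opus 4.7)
The plan is to compute the pro-algebra quotient $\tub{\tens W}{I^\infty}/\tub{I}{I^\infty}$ explicitly using the modified Fedosov product from Lemma \ref{lem:tensor_with_Fedosov}, then verify this extension survives the passage to linear growth bornologies and levelwise completion, and finally check the two side conditions.

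First I would construct a bounded $\dvr$-bilinear lift $\mu\colon W\otimes W\to W$ of the multiplication of $A$ along $\varrho$, meaning $\varrho(\mu(x,y))=\varrho(x)\varrho(y)$. Using Theorem \ref{the:structure} to write each $\dvgen$-adically complete bounded submodule of $W$ as $\Cont_0(B,\dvr)$, I can define $\mu$ on canonical basis vectors by lifting $\varrho(\delta_i)\varrho(\delta_j)\in A$ to an arbitrary preimage in $W$ and extending by bounded bilinearity coherently across submodules. Associativity of $A$ ensures that all associators of $\mu$ land in $\ker\varrho$. Via Lemma \ref{lem:tensor_with_Fedosov}, $\tens W$ is identified bornologically with $\Omega^\even(W)$, and an adaptation of the inductive argument of Lemma \ref{lem:powers_I_TB} (replacing $\dvgen W$ by $\ker\varrho$ throughout) yields
\[
\tub{\tens W}{I^m}=W+\textstyle\sum_{j\geq 1}\dvgen^{-\floor{j/m}}\Omega^{2j}(W), \qquad \tub{I}{I^m}=\ker\varrho+\textstyle\sum_{j\geq 1}\dvgen^{-\floor{j/m}}\Omega^{2j}(W).
\]
The levelwise quotient is $W/\ker\varrho\cong A$ with the fine bornology (using that $W$ is fine mod $\dvgen$), giving an extension of projective systems of bornological $\dvr$-algebras $\tub{I}{I^\infty}\into\tub{\tens W}{I^\infty}\onto A$.

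Next I would show this extension is preserved under linear growth and levelwise completion. Since $\tub{I}{I^m}$ and $\tub{\tens W}{I^m}$ differ only in the degree-$0$ summand, the linear growth bornology restricts from the latter to the former and descends to the fine bornology on $A$, which coincides with its own linear growth bornology. Levelwise completion then preserves the extension because the quotient $A$ is already complete with fine bornology and the kernel's completion embeds as an ideal in the middle term's completion. This yields $\comb{\ling{\tub{I}{I^\infty}}}\into\comb{\ling{\tub{\tens W}{I^\infty}}}\onto A$.

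Finally I would verify the two side conditions. That $\comb{\ling{\tub{\tens W}{I^\infty}}}$ is fine mod $\dvgen$ is immediate: its reduction mod $\dvgen$ is the quotient $A$ with the fine bornology. For analytic nilpotence of $\comb{\ling{\tub{I}{I^\infty}}}$, by \cite{Cortinas-Meyer-Mukherjee:NAHA}*{Proposition~4.2.4} and its proof technique it suffices to show $\tub{I}{I^\infty}$ is nilpotent mod $\dvgen$ as a projective system; using the explicit description and bookkeeping on the weights $\dvgen^{-\floor{j/m}}$, for $m'=2m$ and sufficiently large $k$ one has $(\tub{I}{I^{m'}})^k\subseteq\dvgen\tub{I}{I^m}$. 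The main obstacle will be the adaptation of Lemma \ref{lem:powers_I_TB} when $\ker\varrho$ strictly contains $\dvgen W$: the inductive step requires careful tracking of how the degree-zero contributions from $\ker\varrho$ interact with higher differential forms under the Fedosov product, using the fine-mod-$\dvgen$ hypothesis on $W$ crucially.
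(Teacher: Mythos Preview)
Your approach has two genuine gaps. First, the construction of a bounded $\dvr$-bilinear $\mu\colon W\otimes W\to W$ does not work as sketched: Theorem~\ref{the:structure} presents each complete bounded $M\subseteq W$ as $\Cont_0(B_M,\dvr)$, but these presentations are not functorial in~$M$ --- the basis vectors of~$M$ need not map to basis vectors of a larger~$M'$ --- so the local maps $\mu_M$ cannot be glued ``coherently across submodules'' into a global bounded bilinear map. (This non-coherence is exactly the phenomenon that forces the paper to use only \emph{local} maps $f_M\colon M\to\dvr[A]$ in Section~\ref{sec:independence_lifting}.) Second, even granting~$\mu$, your tube-algebra formula
\[
\tub{\tens W}{I^m}=W+\textstyle\sum_{j\ge1}\dvgen^{-\lfloor j/m\rfloor}\Omega^{2j}(W)
\]
is false whenever $\ker\varrho\supsetneq\dvgen W$: for any $x\in\ker\varrho\setminus\dvgen W$ the element $\dvgen^{-1}x$ lies in $\dvgen^{-1}I\subseteq\tub{\tens W}{I}$, but its degree-$0$ part $\dvgen^{-1}x$ is not in~$W$ since~$W$ is torsionfree. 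Lemma~\ref{lem:powers_I_TB} is stated and proved only for $W_0=\dvr[X]$ with~$X$ an $\resf$-basis of~$A$, where $\ker\varrho=\dvgen W_0$ \emph{exactly}; that equality is what makes the degree-$0$ bookkeeping close up. You flag this step as ``the main obstacle,'' but the formula you write down is already wrong, not merely hard to verify. (A smaller slip: the reduction of $\comb{\ling{\tub{\tens W}{I^m}}}$ mod~$\dvgen$ is not~$A$; the quotient by $\tub{I}{I^m}$ is~$A$, and that ideal is strictly larger than $\dvgen\cdot\tub{\tens W}{I^m}$.)

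The paper sidesteps both problems. It first treats $W_0=\dvr[X]$, where~$\mu$ is defined on the free basis and Lemma~\ref{lem:powers_I_TB} applies directly to yield the extension $\tub{I_0}{I_0^m}\into\tub{\tens W_0}{I_0^m}\onto A$. For general~$W$ it never computes $\tub{\tens W}{I^m}$ explicitly; instead it proves $\tub{\tens W}{I^l}/\tub{I}{I^l}\cong A$ by contradiction: if $\sigma(a)\in\tub{I}{I^l}$ for some $a\neq0$, localize to a bounded $M\subseteq W$ via the decomposition underlying Proposition~\ref{pro:HAC_localisation}, transport along a bounded map $M\to W_0$ lifting~$\id_A$ (as in Lemma~\ref{lem:local_inverses_W_to_VA}), and invoke the special case to force $a=0$. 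The ``fine mod~$\dvgen$'' claim is handled separately via Lemmas~\ref{lem:fine_quotients} and~\ref{lem:fine_quotient_tensor} and a subspace-bornology argument, not via the quotient by~$I$.
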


\begin{proof}
  Recall from~\cite{Meyer-Mukherjee:Bornological_tf} that the dagger
  completion of a projective system~\(B\) of torsionfree
  bornological algebras is \(B^\updagger \defeq \comb{\ling{B}}\).
  Therefore, \(\comb{\ling{\tub{I}{I^\infty}}}\) and
  \(\comb{\ling{\tub{\tens W}{I^\infty}}}\) are pro-dagger algebras.
  The pro-algebra \(\tub{I}{I^\infty}\) is nilpotent mod~\(\dvgen\)
  by \cite{Cortinas-Meyer-Mukherjee:NAHA}*{Proposition~4.2.3}.  This
  is inherited by \(\comb{\ling{\tub{I}{I^\infty}}}\).  Since
  \(\comb{\tub{I}{I^\infty}}\) is pro-dagger, it is analytically
  nilpotent.

  Let \(m\in\N^*\).  By Lemma~\ref{lem:fine_quotient_tensor},
  \(\tens W\) is fine mod~\(\dvgen\).  Since \(\tub{\tens W}{I^m}\)
  carries the subspace bornology from \(\tens W\otimes\dvf\), it
  follows that \(\tub{\tens W}{I^m}/ \dvgen\cdot \tens W\) carries
  the subspace bornology from
  \(\tens W\otimes\dvf / \dvgen \tens W\), which is fine by
  Lemma~\ref{lem:fine_quotients}.  Therefore, \(\tub{\tens W}{I^m}\)
  is fine mod~\(\dvgen\).  (This also follows from
  Lemma~\ref{lem:bounded_in_tensor_tube_technical1}.)  Taking the
  linear growth bornology and completing does not change the
  quotient mod~\(\dvgen\) and its bornology.  Thus
  \(\comb{\ling{\tub{\tens W}{I^\infty}}}\) remains fine
  mod~\(\dvgen\).

  It remains to build the extension of pro-algebras above.  We first
  do this in the special case \(W_0 = \dvr[X]\) for a basis~\(X\)
  of~\(A\).  Let \(I_0 \defeq \ker (\varrho\colon \dvr[X]\to A)\).
  Lemma~\ref{lem:powers_I_TB} implies that there is
  a natural extension of \(\dvr\)\nb-algebras
  \[
    \tub{I_0}{I_0^m} \into \tub{\tens W_0}{I_0^m} \overset{q}\onto A,
  \]
  where
  \(q\bigl(\sum_{j=0}^\infty \omega_j\bigr) = \varrho(\omega_0)\) if
  \(\omega_j \in \dvgen^{-\floor{j/m}} \Omega^{2 j} W_0\) for all
  \(j\in\N\).  Since~\(A\) is an \(\resf\)\nb-algebra, it is
  semidagger.  Then
  \cite{Cortinas-Meyer-Mukherjee:NAHA}*{Lemma~2.2.6} shows that the
  linear growth bornology on \(\tub{\tens W_0}{I_0^m}\) is equal to
  the linear growth bornology relative to \(\tub{I_0}{I_0^m}\) (see
  \cite{Cortinas-Meyer-Mukherjee:NAHA}*{Definition~2.2.3} for the
  definition of linear growth bornologies relative to ideals).
  Consequently, the diagram
  \[
    \ling{\tub{I_0}{I_0^m}} \into \ling{\tub{\tens W_0}{I_0^m}}
    \overset{q}\onto A
  \]
  is an extension of bornological \(\dvr\)\nb-algebras as well.  The
  proof of \cite{Cortinas-Meyer-Mukherjee:NAHA}*{Proposition~2.3.3}
  does not use the assumption that the quotient should be
  bornologically torsionfree.  Therefore, the proposition remains
  true without this assumption.  It applies to the extension above
  because~\(A\) is complete.  It follows that
  \[
    \comb{\ling{\tub{I_0}{I_0^m}}}
    \into \comb{\ling{\tub{\tens W_0}{I_0^m}}}
    \overset{q}\onto A
  \]
  is an extension of bornological \(\dvr\)\nb-algebras.
  Letting~\(m\) vary, these extensions combine to an extension of
  pro-algebras.  This finishes the proof in the special case.

  Now let \(\varrho \colon W \onto A\) be as in the statement of the
  proposition.  Choose any map of sets \(\sigma\colon A\to \tens W\)
  that splits the surjective homomorphism
  \(\varrho^\#\colon \tens W \to A\).  Then
  \(x - \sigma(\varrho^\#(x)) \in I\) for any \(x\in \tens W\), and
  this identifies \(\tens W \cong I \times A\) as a set.  If
  \(x\in \tub{\tens W}{I^l}\) then \(x=x_0 + x_1\) with
  \(x_0 \in \tens W\), \(x_1\in \tub{I}{I^l}\), and we may replace
  \(x_0\) by \(\sigma(\varrho^\#(x_0))\) and \(x_1\) by
  \(x_1 + x_0 - \sigma(\varrho^\#(x_0))\) here.  Thus any element of
  \(\tub{\tens W}{I^l}\) is of the form \(x=\sigma(a) + x_1\) for
  some \(a\in A\), \(x_1\in \tub{I}{I^l}\).  If this decomposition
  were unique, it would follow that
  \(\tub{\tens W}{I^l}\bigm/ \tub{I}{I^l} \cong A\).  Assume that
  this fails.  Then there is \(a\in A\) with \(a\neq 0\) and
  \(\sigma(a) \in \tens W \cap \tub{I}{I^l}\).  Then
  \(\{\sigma(a)\} \in \tub{\tens M}{(\tens M\cap I)^l}\) for some
  complete, bounded submodule \(M \subseteq W\)
  by~\eqref{eq:indlim_local}.  A similar proof allows us to
  arrange
  \(\{\sigma(a)\} \in \dvgen \tens M + \tub{\tens M\cap I}{(\tens
    M\cap I)^l} = \tub{\tens M\cap I}{(\tens M\cap I)^l}\) because
  \(\sigma(a)\in \tub{I}{I^l}\).  By
  Lemma~\ref{lem:local_inverses_W_to_VA}, the identity map on~\(A\)
  lifts to a bounded \(\dvr\)-module homomorphism \(M \to \dvr[A]\).
  Similarly, we may lift \(\id_A\) to a bounded \(\dvr\)-module
  homomorphism \(f\colon M\to W_0\defeq \dvr[X]\) because~\(X\) is a
  basis of~\(A\).  By Lemma~\ref{lem:hybrid_functorial}, this
  induces a \(\dvr\)\nb-algebra homomorphism
  \(\tub{\tens M}{(\tens M\cap I)^l}\to \tub{\tens W_0}{I_0^l}\) for
  \(l\in\N^*\).  This homomorphism maps
  \(\sigma(a) \in \tub{\tens M \cap I}{(\tens M\cap I)^l}\) to
  \(\tub{I_0}{I_0^l}\), which is killed by the map
  \(\tub{\tens W_0}{I_0^l} \to A\) defined above.  The latter map
  restricted to~\(\tens W_0\) is just the canonical projection
  to~\(A\), which maps the image of \(\sigma(a) \in \tens W\) in
  \(\tens W_0\) to~\(a\).  Therefore, \(a=0\), a contradiction.
  This implies that \(\tub{\tens W}{I^m}/ \tub{I}{I^l} \cong A\).
  Now we proceed as in the case of~\(W_0\) to show that this remains
  so when we take linear growth bornologies and complete.
\end{proof}

\begin{remark}
  In particular, Theorem~\ref{the:prodagger_lifting} applies to
  \(W=\dvr[A]\) and shows that Theorem~\ref{the:pro-dagger_lifting}
  works for the pro-dagger algebra lifting
  \(\tub{\tens \dvr[A]}{I^\infty}^\updagger\) of~\(A\).  That is,
  \[
    \HAC(A) \simeq \diss \comb{X}(\tub{\tens
      \dvr[A]}{I^\infty}^\updagger).
  \]
  This gives an alternative definition of \(\HAC(A)\) that stays
  within the realm of bornological \(\dvr\)\nb-modules.  However,
  this definition would make the proofs of our main theorems much
  harder because they depend on partially defined maps, which do not
  cooperate with bornological completions.
\end{remark}

\section{Homotopy invariance, matrix stability and excision}
\label{sec:homological_properties}

In this section, we show that analytic cyclic homology for
\(\resf\)\nb-algebras and its bivariant version are homotopy
invariant, stable under tensoring with algebras of finite matrices,
and satisfy excision.  This implies that our theory is Morita
invariant for unital \(\resf\)\nb-algebras.

\begin{theorem}
  \label{the:homotopy_invariance}
  Let \(A\) be an \(\resf\)\nb-algebra.  The inclusion
  \(\iota_A \colon A \to A[t]\) induces a quasi-isomorphism
  \(\HAC(A) \cong \HAC(A[t])\).  Therefore,
  \(\HA_*(B,A) \cong \HA_*(B,A[t])\),
  \(\HA_*(A,B) \cong \HA_*(A[t],B)\) for all
  \(\resf\)\nb-algebras~\(B\) and \(\HA_*(A) \cong \HA_*(A[t])\).
\end{theorem}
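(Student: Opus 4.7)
The plan is to transfer polynomial homotopy invariance from pro-dagger algebras to $\resf$-algebras, using the liftings constructed in Section \ref{sec:lift_pro_dagger}. First, I would apply Theorem \ref{the:prodagger_lifting} with $W=\dvr[A]$ to obtain an extension of pro-algebras $N \into D \onto A$ in which $D \defeq \comb{\ling{\tub{\tens \dvr[A]}{I^\infty}}}$ is a pro-dagger algebra fine mod $\dvgen$ and $N \defeq \comb{\ling{\tub{I}{I^\infty}}}$ is analytically nilpotent. Theorem \ref{the:pro-dagger_lifting} then yields a canonical quasi-isomorphism $\HAC(A) \simeq \diss \HAC(D)$.

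Second, I would form the completed tensor product with the dagger algebra $\dvr[t]^\updagger$. The resulting sequence
\[
  N \hot \dvr[t]^\updagger \to D \hot \dvr[t]^\updagger \to A[t]
\]
is again an extension of pro-algebras: exactness on the left is precisely the content of Lemma \ref{lem:tensor-exact}, while the identification of the right-hand quotient with $A[t]$ uses $\dvgen A=0$ together with $\dvr[t]^\updagger/\dvgen \cong \resf[t]$. The ideal $N \hot \dvr[t]^\updagger$ is analytically nilpotent because nilpotence mod $\dvgen$ survives completed tensor product with the unital dagger algebra $\dvr[t]^\updagger$, and a completed tensor product of a pro-dagger algebra with a dagger algebra is still pro-dagger. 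Likewise $D \hot \dvr[t]^\updagger$ is pro-dagger and fine mod $\dvgen$, using that fine bornologies are preserved by completed tensor products (in the spirit of Lemma \ref{lem:fine_quotient_tensor}). A second application of Theorem \ref{the:pro-dagger_lifting} then gives a canonical quasi-isomorphism $\HAC(A[t]) \simeq \diss \HAC(D \hot \dvr[t]^\updagger)$ which, by naturality, is compatible with the map induced by $\iota_A$ on one side and by the unit inclusion $D \hookrightarrow D \hot \dvr[t]^\updagger$ on the other.

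Third, I would invoke polynomial homotopy invariance of $\HAC$ for projective systems of dagger algebras as proven in \cite{Cortinas-Meyer-Mukherjee:NAHA}: the unit inclusion $D \hookrightarrow D \hot \dvr[t]^\updagger$ induces a quasi-isomorphism. Dissecting and chasing the naturality square yields $\HAC(A) \simeq \HAC(A[t])$ as required. The bivariant consequences are then formal: since $\HA_i(A,B) = \Hom_{\mathsf{Der}(\overleftarrow{\mathsf{Ind}(\mathsf{Ban}_\dvf)})}(\HAC(A),\HAC(B)[i])$, a quasi-isomorphism in either slot descends to an isomorphism of these derived $\Hom$-groups, yielding $\HA_*(B,A) \cong \HA_*(B,A[t])$ and $\HA_*(A,B) \cong \HA_*(A[t],B)$; the identity $\HA_*(A) \cong \HA_*(A[t])$ follows directly from the homology-level definition of $\HA_*$, or by specialising $B=\resf$ and applying Theorem \ref{theorem:bivariant_specialises}.

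The main obstacle will be the second step: verifying that $D \hot \dvr[t]^\updagger$ is genuinely a pro-dagger algebra fine mod $\dvgen$ sitting in the claimed extension. Lemma \ref{lem:tensor-exact} is what keeps the tensored sequence exact; the remaining properties, namely "fine mod $\dvgen$", pro-daggerness, and analytic nilpotence of the kernel, require careful bornological bookkeeping along the lines of Sections \ref{sec:preliminaries} and \ref{sec:lift_pro_dagger}, but should follow from the hereditary properties of those conditions.
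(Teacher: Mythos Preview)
Your proposal is correct and follows essentially the same approach as the paper's own proof: lift \(A\) to a pro-dagger algebra~\(D\) via Theorem~\ref{the:prodagger_lifting}, tensor the resulting extension with \(\dvr[t]^\updagger\) (using Lemma~\ref{lem:tensor-exact} for exactness), verify that the tensored extension still satisfies the hypotheses of Theorem~\ref{the:pro-dagger_lifting}, and then invoke homotopy invariance from \cite{Cortinas-Meyer-Mukherjee:NAHA}*{Theorem~4.6.2}. The paper supplies the precise citations you anticipated needing for the ``bookkeeping'' in your second step, namely \cite{Cortinas-Meyer-Mukherjee:NAHA}*{Corollary~2.1.21} for pro-daggerness and \cite{Cortinas-Meyer-Mukherjee:NAHA}*{Proposition~4.2.6} for analytic nilpotence of the kernel.
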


\begin{proof}
  Theorem~\ref{the:prodagger_lifting} gives a pro-algebra extension
  \(N \into D \onto A\), where~\(D\) is a pro-dagger algebra and
  fine mod~\(\dvgen\) and~\(N\) is analytically nilpotent.  Then
  \[
    N \hot \dvr[t]^\updagger
    \into D \hot \dvr[t]^\updagger
    \onto A \hot \dvr[t]^\updagger
  \]
  is a pro-dagger algebra extension as well by Lemma \ref{lem:tensor-exact}.  Since
  \(\dvgen\cdot A=0\), we may identify \(A \hot \dvr[t]^\updagger\)
  with \(A[t]\), equipped with the fine bornology.  The pro-algebra
  \(D\hot \dvr[t]^\updagger\) is pro-dagger by
  \cite{Cortinas-Meyer-Mukherjee:NAHA}*{Corollary 2.1.21} (which
  mostly refers to
  \cite{Cortinas-Cuntz-Meyer-Tamme:Nonarchimedean}*{Proposition~3.1.25}).
  It is fine mod~\(\dvgen\) because
  \((D\hot \dvr[t]^\updagger)/(\dvgen) \cong (D/\dvgen D)
  \otimes_\resf \resf[t]\).  The pro-algebra
  \(N\hot \dvr[t]^\updagger\) is analytically nilpotent by
  \cite{Cortinas-Meyer-Mukherjee:NAHA}*{Proposition~4.2.6}.  Now
  Theorem~\ref{the:pro-dagger_lifting} and
  \cite{Cortinas-Meyer-Mukherjee:NAHA}*{Theorem~4.6.2} imply the
  quasi-isomorphisms
  \[
    \HAC(A) \simeq \diss \HAC(D)
    \simeq \diss \HAC(D \hot \dvr[t]^\updagger)
    \simeq \HAC(A[t]).
  \]
  This easily implies the asserted isomorphisms of homology groups.
\end{proof}

Theorem~\ref{the:homotopy_invariance} also follows immediately from
Proposition~\ref{pro:homotopy_lifting_2}.

\begin{proposition}
  \label{pro:stability}
  Let~\(A\) be an \(\resf\)\nb-algebra, \(\Lambda\) a set, and
  \(\lambda \in \Lambda\).  Let~\(M_\Lambda(A)\) be the algebra of
  finitely supported \(A\)\nb-valued matrices indexed by
  \(\Lambda \times \Lambda\).  The inclusion
  \(\iota_\lambda \colon A \to M_\Lambda(A)\),
  \(a \mapsto e_{\lambda,\lambda} \otimes a\), induces a quasi-isomorphism
  \[
    \HAC(A) \cong \HAC(M_\Lambda(A)).
  \]
  Therefore, \(\HA_*(B,A) \cong \HA_*(B,M_\Lambda(A))\),
  \(\HA_*(A,B) \cong \HA_*(M_\Lambda(A),B)\) for all
  \(\resf\)\nb-algebras~\(B\) and
  \(\HA_*(A) \cong \HA_*(M_\Lambda(A))\).
\end{proposition}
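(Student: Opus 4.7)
The plan is to mirror the argument for Theorem \ref{the:homotopy_invariance}, replacing the polynomial dagger algebra \(\dvr[t]^\updagger\) with the matrix algebra \(M_\Lambda(\dvr)\) equipped with the fine bornology. By Theorem \ref{the:prodagger_lifting} there is a pro-algebra extension \(N \into D \onto A\) with \(D\) a pro-dagger algebra that is fine mod~\(\dvgen\) and \(N\) analytically nilpotent. Since \(M_\Lambda(\dvr)\) with the fine bornology is the filtered colimit \(\varinjlim_{F} M_F(\dvr)\) over finite subsets \(F\subseteq \Lambda\), it is a dagger algebra.

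First I would tensor the extension by \(M_\Lambda(\dvr)\) and invoke Lemma \ref{lem:tensor-exact} together with the exactness of filtered colimits to conclude that
\[
  N \hot M_\Lambda(\dvr) \into
  D \hot M_\Lambda(\dvr) \onto
  A \hot M_\Lambda(\dvr)
\]
is a pro-algebra extension. Because \(\dvgen\cdot A = 0\), the quotient \(A \hot M_\Lambda(\dvr)\) carries the fine bornology and is canonically identified with \(M_\Lambda(A)\). The middle term is a pro-dagger algebra by \cite{Cortinas-Meyer-Mukherjee:NAHA}*{Corollary 2.1.21} and is fine mod~\(\dvgen\) because its reduction modulo~\(\dvgen\) is \((D/\dvgen D)\otimes_\resf M_\Lambda(\resf)\), which is fine. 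The kernel \(N \hot M_\Lambda(\dvr)\) is analytically nilpotent by \cite{Cortinas-Meyer-Mukherjee:NAHA}*{Proposition 4.2.6}.

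Then I would apply Theorem \ref{the:pro-dagger_lifting} twice to obtain quasi-isomorphisms \(\HAC(A) \simeq \diss \HAC(D)\) and \(\HAC(M_\Lambda(A)) \simeq \diss \HAC(D \hot M_\Lambda(\dvr))\). To bridge the two sides, invoke matrix stability for analytic cyclic homology of pro-dagger algebras, which produces a quasi-isomorphism \(\HAC(D) \simeq \HAC(D \hot M_\Lambda(\dvr))\) induced by the corner embedding at \(e_{\lambda,\lambda}\). These three quasi-isomorphisms compose functorially via \(\iota_\lambda\) to give \(\HAC(A)\simeq \HAC(M_\Lambda(A))\); the identifications \(\HA_*(B,A)\cong \HA_*(B,M_\Lambda(A))\) and \(\HA_*(A,B)\cong\HA_*(M_\Lambda(A),B)\) then follow from the definition of bivariant analytic cyclic homology in the derived category of \(\overleftarrow{\mathsf{Ind}(\mathsf{Ban}_\dvf)}\).

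The main obstacle is the matrix stability input in the pro-dagger setting for arbitrary (possibly infinite) \(\Lambda\). For finite \(\Lambda\) this is standard by the trace-projection argument for quasi-free pro-algebras, as developed in \cite{Cortinas-Meyer-Mukherjee:NAHA}. For infinite \(\Lambda\), one must reduce to the finite case by verifying that the construction of \(\HAC\) (tensor algebra, tube construction, linear growth bornology, \(X\)-complex, and the levelwise quasi-completion) interacts well enough with the filtered colimit \(M_\Lambda(\dvr)=\varinjlim_F M_F(\dvr)\) for the corner inclusions \(D\hot M_F(\dvr)\hookrightarrow D\hot M_\Lambda(\dvr)\) to assemble into a quasi-isomorphism. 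This is essentially a continuity check analogous to Proposition \ref{pro:HAC_localisation}, and is where the technical care is concentrated.
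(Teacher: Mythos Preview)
Your proposal is correct and follows essentially the same route as the paper: lift \(A\) to a pro-dagger algebra~\(D\) via Theorem~\ref{the:prodagger_lifting}, apply \(M_\Lambda(-)\) to the extension, check that \(M_\Lambda(D)\) (which is your \(D\hot M_\Lambda(\dvr)\)) is still pro-dagger and fine mod~\(\dvgen\) and that \(M_\Lambda(N)\) is analytically nilpotent, then combine Theorem~\ref{the:pro-dagger_lifting} with matrix stability for pro-dagger algebras.

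The obstacle you flag at the end is not actually present: matrix stability of \(\HAC\) for pro-dagger algebras with an arbitrary (possibly infinite) index set~\(\Lambda\) is exactly \cite{Cortinas-Meyer-Mukherjee:NAHA}*{Proposition~6.2}, which the paper simply cites. No separate continuity or filtered-colimit reduction is required on your part, so you can drop the final paragraph and invoke that proposition directly.
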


\begin{proof}
  Theorem~\ref{the:prodagger_lifting} gives a pro-algebra extension
  \(N \into D \onto A\), where~\(D\) is a pro-dagger algebra and
  fine mod~\(\dvgen\) and~\(N\) is analytically nilpotent.  Then
  \(M_\Lambda(N) \into M_\Lambda(D) \onto M_\Lambda(A)\) is a
  pro-algebra extension as well.  As in the proof of
  Theorem~\ref{the:homotopy_invariance}, we see that
  \(M_\Lambda(D)\) is again a pro-dagger algebra and fine
  mod~\(\dvgen\) and \(M_\Lambda(N)\) is still analytically
  nilpotent.  Then Theorem~\ref{the:pro-dagger_lifting} and
  \cite{Cortinas-Meyer-Mukherjee:NAHA}*{Proposition~6.2} give
  quasi-isomorphisms
  \[
    \HAC(M_\Lambda(A))
    \simeq \diss \HAC(M_\Lambda (D))
    \simeq \diss \HAC(D)
    \simeq \HAC(A).
  \]
  This easily implies the asserted isomorphisms of homology groups.
\end{proof}

\begin{theorem}
  Let \(K \overset{i}\into E \overset{p}\onto Q\) be an extension of
  \(\resf\)\nb-algebras.  Then there is a natural exact triangle
  \[
    \HAC(K) \xrightarrow{i_*} \HAC(E) \xrightarrow{p_*} \HAC(Q)
    \xrightarrow{\delta} \HAC(K)[-1]
  \]
  in the derived category.  There are natural long exact sequences
  \[
    \begin{tikzcd}[baseline=(current bounding box.west)]
      \HA_0(K) \arrow[r, "i_*"] &
      \HA_0(E) \arrow[r, "p_*"] &
      \HA_0(Q) \arrow[d, "\delta"] \\
      \HA_1(Q) \arrow[u, "\delta"] &
      \HA_1(E) \arrow[l, "p_*"] &
      \HA_1(K),  \arrow[l, "i_*"]
    \end{tikzcd}
  \]
  \[
    \begin{tikzcd}[baseline=(current bounding box.west)]
      \HA_0(B,K) \arrow[r, "i_*"] &
      \HA_0(B,E) \arrow[r, "p_*"] &
      \HA_0(B,Q) \arrow[d, "\delta"] \\
      \HA_1(B,Q) \arrow[u, "\delta"] &
      \HA_1(B,E) \arrow[l, "p_*"] &
      \HA_1(B,K),  \arrow[l, "i_*"]
    \end{tikzcd}
  \]
  \[
    \begin{tikzcd}[baseline=(current bounding box.west)]
      \HA_0(K,B) \arrow[r, <-, "i^*"] &
      \HA_0(E,B) \arrow[r, <-, "p^*"] &
      \HA_0(Q,B) \arrow[d, <-, "\delta"] \\
      \HA_1(Q,B) \arrow[u, <-, "\delta"] &
      \HA_1(E,B) \arrow[l, <-, "p^*"] &
      \HA_1(K,B).  \arrow[l, <-, "i^*"]
    \end{tikzcd}
  \]
\end{theorem}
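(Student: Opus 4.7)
The plan is to follow the same strategy used in the proofs of Theorem~\ref{the:homotopy_invariance} and Proposition~\ref{pro:stability}: promote the given extension to a compatible extension of pro-dagger algebras via Theorem~\ref{the:prodagger_lifting}, invoke the excision theorem for pro-dagger algebras established in~\cite{Cortinas-Meyer-Mukherjee:NAHA}, and transport the resulting exact triangle back to \(\HAC\) of \(\resf\)\nb-algebras using Theorem~\ref{the:pro-dagger_lifting}. The three long exact sequences will then follow formally by applying homological functors to the exact triangle, with the absolute version handled by Theorem~\ref{theorem:bivariant_specialises}.

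Concretely, first apply Theorem~\ref{the:prodagger_lifting} to~\(E\) to obtain a pro-dagger algebra extension \(N \into D \onto E\), with~\(D\) fine mod~\(\dvgen\) and~\(N\) analytically nilpotent. Let \(J\) denote the preimage of \(K\subseteq E\) under \(D\onto E\). Then \(J \into D \onto Q\) is an extension of pro-algebras, while \(N \into J \onto K\) exhibits~\(J\) as a pro-algebra lifting of~\(K\) with analytically nilpotent kernel. A verification would show that~\(J\) is again pro-dagger and fine mod~\(\dvgen\): the semidagger and completeness properties are inherited by ideals in pro-dagger algebras, and \(J/\dvgen J\) carries the fine bornology because it is the preimage in \(D/\dvgen D\) of the \(\resf\)\nb-submodule~\(K\), which is a sub-\(\resf\)-vector space of the fine \(\resf\)-vector space \(D/\dvgen D\). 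Having secured this, Theorem~\ref{the:pro-dagger_lifting} produces compatible quasi-isomorphisms
\[
\HAC(K) \simeq \diss \HAC(J), \qquad \HAC(E) \simeq \diss \HAC(D), \qquad \HAC(Q) \simeq \diss \HAC(D/J).
\]

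With the compatible lifting in hand, the excision theorem for pro-dagger algebras applied to \(J \into D \onto D/J\) yields an exact triangle \(\HAC(J) \to \HAC(D) \to \HAC(D/J) \to \HAC(J)[-1]\) in the derived category of \(\overleftarrow{\mathsf{CBor}_\dvf}\). Since the dissection functor is additive and sends exact sequences of complete bornological \(\dvf\)\nb-vector spaces to exact sequences of inductive systems of Banach \(\dvf\)\nb-vector spaces, it preserves exact triangles; applying it and invoking the quasi-isomorphisms above gives the desired exact triangle
\[
\HAC(K) \xrightarrow{i_*} \HAC(E) \xrightarrow{p_*} \HAC(Q) \xrightarrow{\delta} \HAC(K)[-1]
\]
in \(\mathsf{Der}(\overleftarrow{\mathsf{Ind}(\mathsf{Ban}_\dvf)})\). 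The bivariant long exact sequences then arise by applying the cohomological functors \(\HA_*(B,-)\) and \(\HA_*(-,B)\) to this triangle, and the long exact sequence for \(\HA_*\) follows by taking \(B = \resf\) and using Theorem~\ref{theorem:bivariant_specialises} to identify \(\HA_*(\resf,-)\) with~\(\HA_*(-)\).

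The main obstacle is the construction of the intermediate pro-dagger algebra~\(J\). The universal properties underlying Theorem~\ref{the:prodagger_lifting} produce liftings of algebras, not of ideals, so confirming that the pullback \(J = p_D^{-1}(K)\) satisfies all the hypotheses needed as input to Theorem~\ref{the:pro-dagger_lifting}, in particular being fine mod~\(\dvgen\) and having analytically nilpotent kernel over~\(K\), will require some care and relies crucially on the hereditary behaviour of these notions under passage to ideals. If this direct pullback approach meets difficulty, an alternative would be to apply Theorem~\ref{the:prodagger_lifting} separately to each of \(K\), \(E\), and~\(Q\), and then use Lemma~\ref{lem:hybrid_functorial} and Proposition~\ref{pro:homotopy_lifting_2} from Section~\ref{sec:homotopy_stability} to lift \(i\) and~\(p\) in a compatible fashion, producing a morphism of pro-dagger algebra extensions whose reduction mod~\(\dvgen\) recovers the given one.
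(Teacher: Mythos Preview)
Your main approach has a genuine gap. The excision theorem \cite{Cortinas-Meyer-Mukherjee:NAHA}*{Theorem~5.1} that you invoke requires the extension of pro-dagger algebras to admit a \emph{pro-linear section}. But your extension \(J \into D \onto Q\) cannot have one: a pro-linear section would be a compatible family of bounded \(\dvr\)\nb-linear maps \(\sigma_n\colon Q \to D_n\), and since \(\dvgen Q = 0\) while each~\(D_n\) is torsionfree, any such \(\sigma_n\) is forced to vanish. So the quotient in your extension is the \(\resf\)\nb-algebra~\(Q\) itself, not a torsionfree lift of it, and this blocks both the section and (depending on the precise hypotheses in \cite{Cortinas-Meyer-Mukherjee:NAHA}) possibly even the applicability of the excision theorem, which is stated for pro-dagger algebras.

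The paper circumvents exactly this obstruction, and the mechanism is the point you are missing. It first lifts~\(Q\) via Theorem~\ref{the:prodagger_lifting} to a torsionfree pro-dagger algebra~\(R_Q\), then builds the lift of~\(E\) from \(W_E \defeq \dvr[K] \oplus R_Q\): form \(R_E \defeq \tub{\tens W_E}{I_E^\infty}^\updagger\), and use the universal property of the tube algebra together with the fact that \(N_Q\) is analytically nilpotent to produce a pro-homomorphism \(\tilde p\colon R_E \to R_Q\). Because~\(R_Q\) sits inside~\(W_E\) as a direct summand, the composite \(R_Q \hookrightarrow W_E \to \tens W_E \to R_E\) is a genuine pro-linear section of~\(\tilde p\). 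Setting \(R_K \defeq \ker \tilde p\) then yields an extension \(R_K \into R_E \onto R_Q\) of honest pro-dagger algebras with the required section, to which \cite{Cortinas-Meyer-Mukherjee:NAHA}*{Theorem~5.1} applies. One then checks (using Theorem~\ref{the:prodagger_lifting} again) that \(R_K\), \(R_E\), \(R_Q\) are analytically nilpotent extensions of \(K\), \(E\), \(Q\) that are fine mod~\(\dvgen\), so Theorem~\ref{the:pro-dagger_lifting} identifies \(\diss\HAC(R_\bullet)\) with \(\HAC(\bullet)\). Your alternative suggestion at the end---lift \(K\), \(E\), \(Q\) separately and try to make the lifts compatible---is closer in spirit, but the paper's construction is precisely the device that makes ``compatible'' mean ``with a built-in section''.
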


\begin{proof}
  Theorem~\ref{the:prodagger_lifting} gives a pro-algebra extension
  \(N_Q \into R_Q \overset{p_Q}\onto Q\), where~\(R_Q\) is a
  pro-dagger algebra and fine mod~\(\dvgen\) and~\(N_Q\) is
  analytically nilpotent.  Let \(s\colon Q\to E\) be an
  \(\resf\)\nb-linear section.  When we view \(E\) and~\(Q\) as
  bornological \(\dvr\)\nb-algebras, we give them the fine
  bornology.  Therefore, \(s\) is also a bounded \(\dvr\)\nb-linear
  map, and the composite
  \(\varphi \defeq s\circ p_Q\colon R_Q \to Q \to E\) is a morphism
  of projective systems of bornological \(\dvr\)\nb-modules, briefly
  called a pro-linear map.  Let \(W_K \defeq \dvr[K]\) with the fine
  bornology, equipped with the map
  \(\varrho_K\defeq \id_K^\#\colon W_K \onto K\).  Then
  \(W_E\defeq W_K \oplus R_Q\) is a projective system of torsionfree
  complete bornological \(\dvr\)\nb-modules.  The pro-linear map
  \(\varrho_E\defeq (\varrho_K, \varphi) \colon W_E = W_K \oplus R_Q
  \onto E\) is an epimorphism because
  \(\varrho_K\colon W_K \onto K\) is surjective,
  \(p_Q\colon R_Q \onto Q\) is a cokernel, and \(E\cong K\oplus Q\).
  As in the construction of the chain complex
  \(\HAC(A,W_E,\varrho_E)\) in Section~\ref{sec:homotopy_stability},
  we form the tensor algebra \(\tens W_E\) and the ideal
  \(I_E \defeq \ker(\varrho_E^\#\colon \tens W_E \to E)\), take the
  tube algebra \(\tub{\tens W_E}{I_E^\infty}\), give it the linear
  growth bornology and complete to form the pro-dagger algebra
  \[
    R_E \defeq \tub{\tens W_E}{I_E^\infty}^\updagger.
  \]
  The canonical projection \(\mathrm{pr}_2\colon W_E \to R_Q\)
  induces a pro-algebra homomorphism
  \(\mathrm{pr}_2^\#\colon \tens W_E \to R_Q\).  The commuting
  square
  \[
    \begin{tikzcd}
      W_K \oplus R_Q \arrow[r, "\mathrm{pr}_2", twoheadrightarrow]
      \arrow[twoheadrightarrow]{d}[swap]{(\varrho_K, \varphi)} &
      R_Q \arrow[d, "p_Q", twoheadrightarrow]\\
      E \arrow[r, "p", twoheadrightarrow] & Q
    \end{tikzcd}
  \]
  implies that~\(\mathrm{pr}_2^\#\) maps
  \(I_E= \ker (\tens W_E \to E)\) to \(N_Q = \ker (R_Q \to Q)\).
  Since~\(N_Q\) is analytically nilpotent, it is nilpotent
  mod~\(\dvgen\).  By
  \cite{Cortinas-Meyer-Mukherjee:NAHA}*{Proposition~4.2.4},
  \(\mathrm{pr}_2^\#\) factors uniquely through a pro-algebra
  homomorphism \(\tub{\tens W_E}{I_E} \to R_Q\).  Since~\(R_Q\) is a
  pro-dagger algebra, this factors uniquely through a pro-algebra
  homomorphism \(\tilde{p}\colon R_E \to R_Q\).  The canonical
  inclusions
  \[
    R_Q \to W_E \to \tens W_E \to \tub{\tens W_E}{I_E^\infty} \to
    \tub{\tens W_E}{I_E^\infty}^\updagger
  \]
  compose to a pro-linear section for~\(\tilde{p}\).  Let
  \(R_K \defeq \ker \tilde{p}\).  We have built an
  extension of pro-dagger algebras \(R_K \into R_E \onto R_Q\) with
  a pro-linear section.  Therefore,
  \cite{Cortinas-Meyer-Mukherjee:NAHA}*{Theorem~5.1} gives us an
  exact triangle
  \begin{equation}
    \label{eq:triangle_in_excision}
    \HAC(R_K) \to \HAC(R_E) \to \HAC(R_Q) \to \HAC(R_K)[-1].
  \end{equation}
  It remains exact when we apply the dissection functor \(\diss\)
  in~\eqref{eq:dissection_over_F_complete}.

  Theorem~\ref{the:prodagger_lifting} implies that there are
  extensions of projective systems of bornological algebras
  \(N_E \into R_E\onto E\) and \(N_Q \into R_Q\onto Q\), where
  \(N_E\) and~\(N_Q\) are analytically nilpotent.  Since
  \(\tilde{p}\) and \(p\colon E\onto Q\) are split surjective, it
  follows that there is an extension \(N_K \into R_K \onto K\) with
  \(N_K \defeq \ker (N_E \to N_Q)\).  As an ideal in an analytically
  nilpotent pro-algebra, \(N_K\) is again analytically nilpotent.
  And~\(R_K\) is a pro-dagger algebra.  An argument as in the proof
  of Theorem~\ref{the:prodagger_lifting} shows that~\(R_E\) is fine
  mod~\(\dvgen\).  This is inherited by \(R_K\) because it is a
  direct summand of~\(R_E\) as a projective system of bornological
  \(\dvr\)\nb-modules.  Now Theorem~\ref{the:pro-dagger_lifting}
  implies that the canonical maps \(R_K \onto K\), \(R_E \onto E\),
  \(R_Q \onto Q\) induce quasi-isomorphisms
  \[
    \diss \HAC(R_K) \simeq \HAC(K),\qquad
    \diss \HAC(R_E) \simeq \HAC(E),\qquad
    \diss \HAC(R_Q) \simeq \HAC(Q).
  \]
  Then all assertions follow easily from the exact triangle
  in~\eqref{eq:triangle_in_excision}.
\end{proof}

\begin{example}
  \label{exa:Leavitt_path_2}
  We consider again the Leavitt and Cohn path algebras \(L(R,E)\)
  and \(C(R,E)\) of a directed graph~\(E\) with coefficients in a
  commutative ground ring~\(R\) as in
  Example~\ref{exa:Leavitt_path_algebras}.  The results
  in~\cite{Cortinas-Montero:K_Leavitt} allow to compute the analytic
  cyclic homology of \(L(\resf,E)\) and \(C(\resf, E)\) directly,
  without any extra work because analytic cyclic homology for
  algebras over~\(\resf\) is exact, matrix-stable, and homotopy
  invariant.  In contrast, the computation of
  \(\HAC(C(\dvr,E)^\updagger)\) and \(\HAC(L(\dvr,E)^\updagger)\)
  in~\cite{Cortinas-Meyer-Mukherjee:NAHA} is based on the ``ideas of
  the proofs'' of results in~\cite{Cortinas-Montero:K_Leavitt}.
  Some extra work is needed because dagger completions are not
  exact; in fact, some completions needed for the proof are not true
  dagger completions.  Therefore, the computation of analytic cyclic
  homology over~\(\resf\) together with the isomorphisms in
  Example~\ref{exa:Leavitt_path_2} gives a more elegant proof of
  \cite{Cortinas-Meyer-Mukherjee:NAHA}*{Theorem~8.1}.
\end{example}

A useful special case of Example~\ref{exa:Leavitt_path_2} is the
following analogue of the Bass Fundamental Theorem from algebraic
\(K\)\nb-theory:

\begin{corollary}
  \label{cor:fundamental_theorem}
  For any \(\resf\)\nb-algebra~\(A\), there is a
  quasi-isomorphism
  \[
    \HAC(A \otimes \resf[t, t^{-1}]) \cong \HAC(A) \oplus
    \HAC(A)[1].
  \]
\end{corollary}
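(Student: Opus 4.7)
The plan is to realize $A \otimes_\resf \resf[t, t^{-1}]$ as the Leavitt path algebra $L(A, E)$ of the directed graph $E$ with a single vertex $v$ and a single loop at $v$, and to apply the strategy of Example \ref{exa:Leavitt_path_2} with coefficients in $A$ instead of $\resf$. For this graph, the Cohn path algebra $C(A, E)$ is the algebraic Toeplitz algebra over $A$, generated by elements $e$, $e^*$ subject to $e^*e = 1$, and one has the natural extension of $\resf$-algebras
\[
0 \to M_\infty(A) \to C(A, E) \to L(A, E) \to 0,
\]
whose ideal is the two-sided ideal generated by the rank-one idempotent $1 - ee^*$.

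The first step is to apply the excision theorem from Section \ref{sec:homological_properties} to this extension, producing an exact triangle involving $\HAC(M_\infty(A))$, $\HAC(C(A, E))$, and $\HAC(A \otimes \resf[t, t^{-1}])$ in the derived category of $\overleftarrow{\mathsf{Ind}(\mathsf{Ban}_\dvf)}$. Matrix stability (Proposition \ref{pro:stability}) then identifies $\HAC(M_\infty(A)) \simeq \HAC(A)$ via the upper-left corner embedding. The central auxiliary input is that the unit inclusion $A \hookrightarrow C(A, E)$ induces a quasi-isomorphism $\HAC(A) \simeq \HAC(C(A, E))$ and, simultaneously, that the composite
\[
\HAC(A) \simeq \HAC(M_\infty(A)) \to \HAC(C(A, E)) \simeq \HAC(A)
\]
is null-homotopic. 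This is the classical algebraic Toeplitz argument of \cite{Cortinas-Montero:K_Leavitt}: the partial isometry $e \in C(A, E)$ with $e^*e = 1$ implements a Murray--von Neumann equivalence between $1$ and $ee^*$, forcing $[1 - ee^*] = 0$ at the level of $\HAC$. I expect this null-homotopy to be the main obstacle, since it requires lifting the Murray--von Neumann argument to an explicit chain homotopy in the pro-ind derived category rather than merely to a statement about $\HA_0$; the lift uses the functoriality established in Lemma \ref{lem:hybrid_functorial} together with matrix stability.

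Once the first arrow in the exact triangle is known to be null-homotopic, the triangle splits and forces a direct-sum decomposition of the third term:
\[
\HAC(A \otimes \resf[t, t^{-1}]) \simeq \HAC(A) \oplus \HAC(A)[1].
\]
A canonical splitting of the resulting short exact sequence is furnished by the algebra embedding $a \mapsto a \otimes 1$ from $A$ into $A \otimes \resf[t, t^{-1}]$, which supplies the first summand; the shifted summand $\HAC(A)[1]$ corresponds to the connecting homomorphism, analogous to the Bott generator of $\HA_1(\resf[t, t^{-1}])$ computed from Example \ref{exa:Leavitt_path_algebras} in the case $A = \resf$.
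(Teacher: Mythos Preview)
Your approach is correct and genuinely different from the paper's.  The paper proves the corollary by lifting~\(A\) to a pro-dagger algebra~\(D\) that is fine mod~\(\dvgen\), observing that \(A\otimes_\resf \resf[t,t^{-1}] \cong (D\hot\dvr[t,t^{-1}]^\updagger)/\dvgen\), and then invoking a pro-dagger generalisation of \cite{Cortinas-Meyer-Mukherjee:NAHA}*{Corollary~8.4} to split \(\HAC(D\hot\dvr[t,t^{-1}]^\updagger)\).  In other words, the paper transports the problem back to the world of pro-dagger algebras via Theorem~\ref{the:pro-dagger_lifting} and cites the Laurent-polynomial result already established there.

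Your route stays entirely over~\(\resf\): you run the Cuntz--Toeplitz argument for the single-loop graph directly on \(\resf\)\nb-algebras, using only the excision, matrix stability and homotopy invariance theorems of Section~\ref{sec:homological_properties}.  This is exactly the strategy the paper advertises in Example~\ref{exa:Leavitt_path_2} for \(L(\resf,E)\) and \(C(\resf,E)\), extended to coefficients in an arbitrary \(\resf\)\nb-algebra~\(A\).  The one point you should make explicit is that the equivalence \(\HAC(A)\simeq\HAC(C(A,E))\) and the vanishing of the class of \(1-ee^*\) really do follow from the arguments in~\cite{Cortinas-Montero:K_Leavitt} with \(A\)\nb-coefficients: the relevant homotopies and \(2\times 2\) rotation matrices are defined over~\(\resf\) and therefore tensor up to~\(A\), so matrix stability and homotopy invariance of~\(\HAC\) suffice.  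Once that is checked, the null-homotopy you need is indeed at the derived-category level (not merely on \(\HA_0\)), so the triangle splits as claimed.  Your argument is more self-contained in that it avoids the detour through pro-dagger algebras; the paper's argument is shorter because the Toeplitz work has already been done in~\cite{Cortinas-Meyer-Mukherjee:NAHA}.
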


\begin{proof}
  As in the proof of Theorem~\ref{the:homotopy_invariance}, we may
  identify the fine bornological algebra
  \(A \otimes \resf[t, t^{-1}]\) with
  \(A \hot \dvr[t,t^{-1}]^\updagger\) and there is an analytically
  nilpotent extension
  \[
    N \hot \dvr[t,t^{-1}]^\updagger \into D \hot
    \dvr[t,t^{-1}]^\updagger \onto A \hot
    \dvr[t,t^{-1}]^\updagger.
  \]
  Here~\(D\) is a pro-dagger algebra lifting of~\(A\) which is
  fine mod~\(\dvgen\) and~\(N\) is analytically nilpotent.  Then
  Theorem~\ref{the:pro-dagger_lifting} implies
  \(\diss \HAC(D) \cong \HAC(A)\) and
  \(\HAC(A \otimes \resf[t,t^{-1}]) \cong \diss \HAC(D \hot
  \dvr[t,t^{-1}]^\updagger)\).  Finally, a generalisation of
  \cite{Cortinas-Meyer-Mukherjee:NAHA}*{Corollary~8.4} to pro-dagger
  algebras instead of dagger algebras gives a quasi-isomorphism
  \( \HAC(D \hot \dvr[t,t^{-1}]^\updagger) \simeq  \HAC(D) \oplus
  \HAC(D)[1]\).  This generalisation is proven in the same way.
\end{proof}

\begin{bibdiv}
  \begin{biblist}
    \bibselect{references}
  \end{biblist}
\end{bibdiv}

\end{document}